\newcommand{\blue}{\color{black}}
\newcommand{\green}{\color{black}}
\renewenvironment{proof}[1][\proofname]{{\bfseries #1.} }{\qed}
\def\Cov{{\rm Cov\,}}
\newcommand{\field}[1]{\mathbb{#1}}
\newcommand{\hT}{\widehat{T}_n}
\newcommand{\R}{\field{R}}
\newcommand{\T}{\field{T}}
\newcommand{\N}{\field{N}}
\newcommand{\Z}{\field{Z}}
\newcommand{\Var}{{\rm Var}}
\newcommand{\e}{{\rm e}}
\newcommand{\F}{{\mathscr{F}}}
\newcommand{\eps}{\varepsilon}
\newcommand{\Tb}{{\mathbb{T}}}
\def\authors#1{{ \begin{center} #1 \vspace{0pt} \end{center} } \smallskip}
\def\institution#1{{\sl \begin{center} #1 \vspace{0pt} \end{center} } }
\def\inst#1{\unskip $^{#1}$}
\def\title#1{{\huge\bf  \begin{center} #1 \vspace{0pt} \end{center}  } \smallskip}
\def\E{{\mathbb{ E}}}
\def\P{{\mathbb{P}}}
\def\F{{\mathscr{F}}}
\def\paref#1{(\ref{#1})}
\newtheorem{theorem}{Theorem}[section]
\newtheorem{proposition}[theorem]{Proposition}
\newtheorem{definition}[theorem]{Definition}
\newtheorem{lemma}[theorem]{Lemma}
\newtheorem{defn}[theorem]{Definition}
\newtheorem{remark}[theorem]{Remark}
\newtheorem{assumption}[theorem]{Assumption}
\begin{document}

\title{{\bf Phase Singularities in Complex \\ Arithmetic Random Waves}}
\date{June 24, 2016}
\authors{%
 \sc Federico Dalmao\inst{1}, Ivan Nourdin \inst{2}, \\ Giovanni Peccati \inst{2} and Maurizia Rossi \inst{2}
}
\institution{\small \inst{1}Departamento de Matem\'atica y Estad\'istica del Litoral, \\ Universidad de la Rep\'ublica, Uruguay \\
\inst{2}Unit\'e de Recherche en Math\'ematiques,
 Universit\'e du Luxembourg
}

\begin{abstract}

{{ Complex arithmetic random waves} are stationary Gaussian complex-valued solutions of the Helmholtz equation on the two-dimensional flat torus. We use Wiener-It\^o chaotic expansions in order to derive a complete characterization of the second order high-energy behaviour of the total number of phase singularities of these functions. Our main result is that, while such random quantities verify a universal law of large numbers, they also exhibit non-universal and non-central second order fluctuations that are dictated by the arithmetic nature of the underlying spectral measures. Such fluctuations are qualitatively consistent with the cancellation phenomena predicted by Berry (2002) in the case of complex random waves on compact planar domains.  Our results extend to the complex setting recent pathbreaking findings by Rudnick and Wigman (2008), Krishnapur, Kurlberg and Wigman (2013) and Marinucci, Peccati, Rossi and Wigman (2016). The exact asymptotic characterization of the variance is based on a fine analysis of the Kac-Rice kernel around the origin, as well as on a novel use of combinatorial moment formulae for controlling long-range weak correlations. }

\medskip

\noindent\textbf{Keywords and Phrases:} Berry's Cancellation; Complex Arithmetic Random Waves;  {{} High-Energy Limit}; Limit Theorems; {{} Laplacian}; Nodal Intersections; Phase Singularities; {{} Wiener Chaos}.

\medskip

\noindent \textbf{AMS 2010 Classification:  {60G60, 60B10, 60D05, 58J50, 35P20}}
\end{abstract}

\section{Introduction}

\subsection{Overview and main results}

Let $\T := \R^2/\Z^2$ be the two-dimensional flat torus, and define $\Delta = \partial^2/\partial x_1^2+ \partial^2/\partial x_2^2$ to be the associated Laplace-Beltrami operator. Our aim in this paper is to characterize the high-energy behaviour of the zero set of complex-valued random eigenfunctions of $\Delta$, that is, of solutions $f$ of the {Helmholtz equation}
\begin{equation}\label{Hequation}
\Delta f + Ef = 0,
\end{equation}
for some adequate $E>0$. {{} In order to understand such a setting, recall} that the eigenvalues of $-\Delta$ are the positive reals of the form $E_n:=4\pi^2n$, where $n=a^2+b^2$ for some $a,b\in \Z$ (that is, $n$ is an integer that can be represented as the sum of two squares). {Here, and throughout the paper}, we set
$$
S := \{ n \in \N : a^2+b^2=n, \,\, \mbox{for some } \,\, a,b\in \Z\},
$$
and for $n\in S$ we define
$$\Lambda_n:=\lbrace \lambda=(\lambda_1,\lambda_2)\in \Z^2: \|\lambda\|^2:=\lambda_1^2+\lambda_2^2=n\rbrace$$
to be the set of {\bf energy levels} associated with $n$, while $\mathcal N_n:=|\Lambda_n|$ denotes its cardinality. An orthonormal basis (in $L^2(\mathbb T)$) for the eigenspace associated with $E_n$ is given by the set of complex exponentials $ \lbrace e_\lambda : \lambda\in \Lambda_n \rbrace$, defined as
$$
e_\lambda(x):=\e^{i2\pi\langle \lambda,x\rangle}, \quad x\in \mathbb T,
$$
with $i = \sqrt{-1}$.

\smallskip

For every $n\in S$, the integer $\mathcal N_n  =: r_2(n)$ counts the number of distinct ways of representing $n$ as the sum of two squares: it is a standard fact (proved e.g. by using Landau's theorem) that $\mathcal{N}_n$ grows on average as $\sqrt{\log n}$, and also that there exists an infinite sequence of prime numbers $p\in S$, $p\equiv 1\, {\rm mod}\, 4$, such that $\mathcal{N}_p=8$. A classical discussion of the properties of $S$ and $\mathcal N_n$ can be found e.g. in \cite[Section 16.9 and 16.10]{H-W}. In the present paper, we will systematically consider sequences $\{n_j\}\subset S$ such that $\mathcal{N}_{n_j}\to \infty$ (this is what we refer to as the {\bf high-energy limit}).

\medskip

The {complex waves} considered in this paper are natural generalizations of the real-valued arithmetic waves introduced by Rudnick and Wigman in \cite{RW}, and further studied in \cite{KKW, MPRW, ORW, RW2}; as such, they are close relatives of the complex fields considered in the physical literature --- see e.g. \cite{BD, Berry 2002, N-survey, NV}, {{} as well as the discussion provided below}.  For every $n\in S$, we define the {\bf complex arithmetic random wave of order} $n$ to be the random field
\begin{equation}\label{e:defrf}
\Theta_n(x):=\frac{1}{\sqrt{\mathcal N_n}}\sum_{\lambda\in \Lambda_n} v_\lambda \,  e_\lambda(x),\quad x\in \mathbb T,
\end{equation}
where the $v_\lambda,\,  \lambda\in \Lambda_n$, are independent and identically distributed (i.i.d.) complex-valued Gaussian random variables such that, for every $\lambda \in \Lambda_n$, ${\rm Re}(v_\lambda)$ and ${\rm Im}(v_\lambda)$ are two independent centered Gaussian random variables with mean zero and variance one\footnote{{} Considering random variables $v_\lambda$ with variance 2 (instead of a more usual unit variance) will allow us to slightly simplify the discussion contained in Section \ref{ss:cr}.}.
The family $\{v_\lambda : \lambda\in \Lambda_n, \, n\in S\}$ is tacitly assumed to be defined on a common probability space $(\Omega, \mathscr{F}, \P)$, with $\E$ indicating expectation with respect to $\P$. It is immediately verified that $\Theta_n$ satisfies the equation \paref{Hequation}, {that is,} $\Delta \Theta_n + E_n \Theta = 0$, and also that $\Theta_n$ is {\bf stationary}, in the sense that, for every $y\in \T$, the translated process $x\mapsto \Theta_n(y+x)$ has the same distribution as $\Theta_n$ (this follows from the fact that the distribution of $\{v_\lambda : \lambda\in \Lambda_n\}$ is invariant with respect to unitary transformations; see Section \ref{ss:cr} for further details on this {straightforward} but fundamental point).

\medskip

The principal focus of our investigation are the high-energy fluctuations of the {{} following} {\bf zero sets}:

\begin{eqnarray}
\mathscr{I}_n &:=& \{  x\in \T : \Theta_n(x) = 0     \} \label{e:zeroset} \\ &=& \{  x\in \T : {\rm Re} (\Theta_n(x)) = 0     \}\cap \{  x\in \T : {\rm Im} (\Theta_n(x)) = 0     \}, \quad n\in S.\notag
\end{eqnarray}
We will show below (Part 1 of Theorem \ref{t:main}) that, with probability one, $\mathscr{I}_n$ is a finite {{} collection of isolated points} for every $n\in S$; throughout the paper, we will write
\begin{equation}\label{ienne}
I_n := |\mathscr{I}_n | = {\rm Card} (\mathscr{I}_n),\qquad n\in S.
\end{equation}
In accordance with the title of this work, the points of $\mathscr{I}_n$ are {called} {\bf phase singularities} for the field $\Theta_n$, in the sense that, for every $x\in \mathscr{I}_n$, the phase of $\Theta_n(x)$ (as a complex-valued random quantity) is not defined.

\medskip

As for nodal lines of real arithmetic waves \cite{KKW, MPRW}, our main results crucially involve the following collection of probability measures on the unit circle $S^1\subset \R^2$:
\begin{equation}\label{e:spectral}
\mu_n (dz) := \frac{1}{\mathcal{N}_n} \sum_{\lambda\in \Lambda_n} \delta_{\lambda/\sqrt{n}} (dz), \quad n\in S,
\end{equation}
as well as the associated Fourier coefficients
\begin{equation}\label{e:fourier}
\widehat{\mu}_n (k ) := \int_{S^1} z^{-k} \mu_n (dz), \quad k\in \Z.
\end{equation}

In view of the definition of $\Lambda_n$, the probability measure $\mu_n$ defined in \eqref{e:spectral} is trivially invariant with respect to the transformations $z\mapsto \overline{z}$ and $z\mapsto i \cdot z$.  The somewhat erratic behaviours of such objects in the high-energy limit are studied in detail in \cite{KKW, KW}. Here, we only record the following statement, implying in particular that the sequences $\{\mu_n : n\in S\}$ and $\{\widehat{\mu}_n (4 ) : n\in S\}$ {\it do not} admit limits as $\mathcal{N}_n$ diverges to infinity within the set $S$.

Recall from \cite{KKW, KW} that a measure $\mu$ on $(S^1, \mathscr{B})$ (where $\mathscr{B}$ is the Borel $\sigma$-field) is said to be {\bf attainable} if there exists a sequence $\{n_j\}\subset S$ such that $\mathcal{N}_{n_j}\to \infty$ and $\mu_{n_j}$ converges to $\mu$ in the sense of the weak-$\star$ topology.

\begin{proposition}[\bf See \cite{KW, KKW}]\label{p:kw} The class of attainable measures is an infinite strict subset of the collection of all probability measures on $S^1$ that are invariant with respect to the transformations $z\mapsto \overline{z}$ and $z\mapsto i \cdot z$. Also, for every $\eta\in [0,1]$ there exists a sequence $\{n_j\}\subset S$ such that $\mathcal{N}_{n_j}\to \infty$ and $| \widehat{\mu}_{n_j}(4)| \to \eta$.
\end{proposition}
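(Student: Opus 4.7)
I would split Proposition \ref{p:kw} into three substatements and treat them in turn, relying at the decisive step on the arithmetic analysis already carried out in \cite{KW} and \cite{KKW}.

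\emph{Symmetry containment.} For every $\lambda = (\lambda_1,\lambda_2) \in \Lambda_n$, both $(\lambda_1,-\lambda_2)$ and $(-\lambda_2,\lambda_1)$ also lie in $\Lambda_n$. Under the identification $\lambda\mapsto (\lambda_1+i\lambda_2)/\sqrt n$, these maps correspond respectively to $z\mapsto \bar z$ and $z\mapsto i\cdot z$, so $\mu_n$ is already invariant under both. Since both invariances are closed under weak-$\star$ convergence, every attainable $\mu$ inherits them, giving the containment half of the first assertion.

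\emph{Infinitude and strictness.} For infinitude I would exhibit an explicit family of pairwise distinct attainable measures. A baseline is supplied by the Erd\H{o}s--Hall / Cilleruelo equidistribution theorem, which gives the Haar measure on $S^1$ as a weak-$\star$ limit along a density-one subsequence of $S$; on top of this, \cite{KW} constructs attainable atomic measures concentrated on any prescribed set of $4m$ equally spaced angles, and \cite{KKW} parameterises a continuous family of attainable measures by the value of $|\widehat{\mu}(4)|$. For strictness, \cite{KKW} exhibits symmetric probability measures on $S^1$ that cannot be attained; the obstruction ultimately stems from arithmetic constraints on the joint values of the Fourier coefficients $\widehat\mu(4k)$, which cannot be simultaneously reproduced by lattice configurations of a common norm.

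\emph{Range of $|\widehat{\mu}_n(4)|$.} Starting from the identity
\[
\widehat{\mu}_n(4) = \frac{1}{\mathcal N_n}\sum_{\lambda\in\Lambda_n}\Bigl(\frac{\lambda_1+i\lambda_2}{\sqrt n}\Bigr)^{-4},
\]
I would identify $\Lambda_n$ with the set of Gaussian integers of norm $n$. For $n = \prod_k p_k^{a_k}$ with $p_k \equiv 1 \pmod 4$ and $p_k = \pi_k\bar\pi_k$ in $\Z[i]$, $\arg\pi_k = \theta_k$, multiplicativity yields the product formula
\[
|\widehat{\mu}_n(4)| = \prod_k \Bigl|\frac{1}{a_k+1}\sum_{b=0}^{a_k} e^{4i(a_k - 2b)\theta_k}\Bigr|,
\]
a product of numbers in $[0,1]$. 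Taking $a_k = 1$ for every $k$ reduces each factor to $|\cos(4\theta_{p_k})|$.

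For a prescribed $\eta\in[0,1]$, I would then invoke Hecke's equidistribution of Gaussian prime angles to select distinct primes $p_1,\ldots,p_{K_j}\equiv 1\pmod 4$ such that $\prod_k |\cos(4\theta_{p_k})|\to \eta$, while $\mathcal N_{n_j} = 4\cdot 2^{K_j}\to \infty$. The main obstacle here is the simultaneous control of the product approximation and of the required divergence $\mathcal N_{n_j}\to\infty$: one cannot use too few primes (or $\mathcal N_{n_j}$ stays bounded) nor too many with generic angles (or the product collapses to $0$), so a careful greedy selection is needed. This balancing act is the substance of the arithmetic analysis carried out in \cite{KKW}, which I would quote as a black box.
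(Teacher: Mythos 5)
The paper offers no proof of this proposition at all — it is imported wholesale from \cite{KW} and \cite{KKW} — so your sketch cannot diverge from it; and indeed your outline matches how those references argue. The soft steps you supply are correct: the invariance of $\mu_n$ under $z\mapsto\bar z$ and $z\mapsto i\cdot z$ (and its persistence under weak-$\star$ limits), the factorisation of $\widehat{\mu}_n(4)$ over Gaussian prime angles with the Fej\'er-type factor $\frac{1}{a_k+1}\sum_{b=0}^{a_k}e^{4i(a_k-2b)\theta_k}$, and the Hecke-equidistribution-based selection of squarefree $n_j=p_1\cdots p_{K_j}$ with $\mathcal{N}_{n_j}=4\cdot 2^{K_j}\to\infty$ and $\prod_k|\cos(4\theta_{p_k})|\to\eta$. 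The only (immaterial) quibble is attribution: the strictness of the inclusion and the classification of attainable measures are the content of \cite{KW} rather than \cite{KKW}, while the realisation of every $\eta\in[0,1]$ as a limit of $|\widehat{\mu}_{n_j}(4)|$ is the part taken from \cite{KKW}; since you quote the hard arithmetic as a black box in either case, this does not affect the validity of your argument.
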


\medskip

Note that, if $\mu_{n_j}$ converges to $\mu_\infty$ in the weak-$\star$ topology, then $\widehat{\mu}_{n_j} (4) \to \widehat{\mu}_\infty(4)$. For instance, one knows from \cite{EH, KKW} that there exists a density one sequence $\{n_j\}\subset S$ such that $\mathcal{N}_{n_j}\to \infty$ and $\mu_{n_j}$ converges to the uniform measure on $S^1$, in which case $\widehat{\mu}_{n_j}(4)\to 0$.

\medskip

\noindent{\it Some conventions.} Given two sequences of positive numbers $\lbrace a_m\rbrace$ and $\lbrace b_m \rbrace$, we shall write $a_m\sim b_m$ if $a_m/b_m \to 1$,
and $a_m\ll b_m$ or (equivalently and depending on notational convenience) $a_m = O(b_m)$ if $a_m/b_m$ is asymptotically bounded. The notation $a_m=o(b_m)$ means as usual that $a_m/b_m\to 0$.
Convergence in distribution for random variables on $(\Omega, \F, \P)$ will be denoted by $\overset{\rm law}{\Longrightarrow}$, whereas equality in distribution will be indicated by the symbol $\overset{\rm law}{=}$.

\medskip

The main result of the present work is the following exact characterization of the first and second order behaviours of $I_n$, {as defined by} \paref{ienne}, in the high-energy limit. As discussed below, it is a highly non-trivial extension of the results proved in \cite{KKW, MPRW}, as well as the first rigorous description of the {\bf Berry's cancellation phenomenon} \cite{Berry 2002} in the context of phase singularities of complex random waves.

\medskip

\begin{theorem} \label{t:main}
\begin{enumerate}

\item[\rm 1.] {\bf (Finiteness and mean)} With probability one, for every $n\in S$ the set $\mathscr{I}_n$ is composed of a finite collection of {isolated} points, and
\begin{equation}\label{e:exp}
\E[I_n] = \frac{E_n}{4\pi} = \pi n.
\end{equation}

\item[\rm 2.] {\bf (Non-universal variance asymptotics)}
As $\mathcal{N}_n \to \infty$,
\begin{equation}\label{e:variance}
\Var(I_n) = d_n \times \frac{E_n^2}{\mathcal N_n^2} \, (1+o(1)),
\end{equation}
where
\begin{equation}\label{e:dn}
d_n := \frac{3\widehat \mu_n(4)^2 + 5}{128\pi^2}.
\end{equation}

\item[\rm 3.] {\bf (Universal law of large numbers)} Let $\lbrace n_j\rbrace\subset S$ be a subsequence such that $\mathcal N_{n_j}\to +\infty$. Then, for every sequence $\{\epsilon_{n_j}\}$ such that $\epsilon_{n_j}\mathcal{N}_{n_j}\to \infty$, one has that

\begin{equation}\label{e:wlln}
\mathbb{P}\left[ \left| \frac{I_{n_j}}{\pi n_j} - 1 \right| >\epsilon_{n_j} \right]  \to 0.
\end{equation}

\item[\rm 4.] {\bf (Non-universal and non-central second order fluctuations)} Let $\lbrace n_j\rbrace\subset S$ be such that $\mathcal N_{n_j}\to +\infty$ and $|\widehat \mu_{n_j}(4)|\to \eta \in [0,1]$. Then,
\begin{eqnarray}\notag
\widetilde I_{n_j} &:=& \frac{I_{n_j} -\E[I_{n_j}] }{\sqrt{\Var(I_{n_j}) }}\\  &\overset{\rm law}{\Longrightarrow}&
\frac{1}{2\sqrt{10+6\eta^2}}\left (\frac{1+\eta}{2}A
+\frac{1-\eta}{2}B
-2(C-2)\right) =: \mathcal{J}_\eta,\label{e:j}
\end{eqnarray}
with $A,B,C$ independent random variables such that $A\overset{\rm law}{=}B\overset{\rm law}{=}2X_1^2+2X_2^2-4X_3^2$
and $C \overset{\rm law}{=} X_1^2+X_2^2$, where
$(X_1,X_2,X_3)$ is a standard Gaussian vector of $\R^3$.

\end{enumerate}

\end{theorem}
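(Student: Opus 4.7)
The plan is to combine the Kac--Rice formalism with a Wiener--It\^o chaos analysis of $I_n$, starting from the observation that the coefficients $v_\lambda=a_\lambda+ib_\lambda$ have independent real and imaginary parts: hence $\Theta_n^{(1)}:=\mathrm{Re}\,\Theta_n$ and $\Theta_n^{(2)}:=\mathrm{Im}\,\Theta_n$ are two independent, stationary, centered real Gaussian fields of unit variance on $\T$, and $\mathscr{I}_n$ is their common zero set. A direct computation using the $\lambda\mapsto-\lambda$ symmetry of $\Lambda_n$ shows that at every $x\in\T$ the $6$-dimensional Gaussian vector $(\Theta_n^{(1)}(x),\Theta_n^{(2)}(x),\nabla\Theta_n^{(1)}(x),\nabla\Theta_n^{(2)}(x))$ is non-degenerate, with values and gradients independent and $\Cov(\nabla\Theta_n^{(j)}(x))=(E_n/2)I_2$. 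Bulinskaya's lemma then yields that $\mathscr{I}_n$ is a.s.\ a finite set of isolated points, while the Kac--Rice formula, combined with the explicit Gaussian moment $\E|\det J_n|=E_n/2$ for the $2\times 2$ matrix $J_n:=\nabla(\Theta_n^{(1)},\Theta_n^{(2)})$ of i.i.d.\ $\mathcal{N}(0,E_n/2)$ entries, gives $\E[I_n]=\int_\T \frac{1}{2\pi}\cdot\frac{E_n}{2}\,dx=E_n/(4\pi)=\pi n$, proving Part~1.

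For Parts~2 and~3, following the strategy developed in \cite{MPRW} for the real case, I would establish a Wiener--It\^o chaos decomposition
\[
I_n=\E[I_n]+\sum_{q\geq 1}I_n[2q],\qquad I_n[2q]\in\text{the $2q$-th Wiener chaos},
\]
obtained by formally writing $I_n=\int_\T |\det J_n(x)|\,\delta_0(\Theta_n(x))\,dx$, Hermite-expanding both $\delta_0$ and the absolute determinant in the standardized Gaussian variables at $x$, and integrating over $\T$ (which converts products of complex exponentials $e_\lambda$ into Kronecker deltas on sums of indices in $\Lambda_n$); odd chaoses vanish by the $v_\lambda\mapsto -v_\lambda$ invariance. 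The crucial structural fact is the analogue of the \emph{Berry cancellation phenomenon}, namely that $I_n[2]\equiv 0$: the two competing contributions to the second-chaos coefficient, one from expanding $\delta_0$ and one from expanding $|\det|$, cancel exactly after using the identity $\frac{1}{\mathcal N_n}\sum_\lambda \lambda_i\lambda_j/n=\delta_{ij}/2$. The leading contribution is therefore $I_n[4]$; a direct computation, in which the constraint $\lambda_1-\lambda_2+\lambda_3-\lambda_4=0$ forces diagonal pairings up to a negligible contribution from genuine length-$4$ spectral correlations, yields $\Var(I_n[4])=d_n\,E_n^2/\mathcal N_n^2\,(1+o(1))$ with $d_n$ as in \eqref{e:dn}, the dependence on $\widehat\mu_n(4)$ entering through moments of the form $\frac{1}{\mathcal N_n^2}\sum_{\lambda,\lambda'}(\lambda/\sqrt n)^4\,\overline{(\lambda'/\sqrt n)^4}$. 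Combined with the high-chaos bound $\sum_{q\geq 3}\Var(I_n[2q])=o(E_n^2/\mathcal N_n^2)$, this proves Part~2, and Part~3 follows immediately from Chebyshev's inequality since $\Var(I_n)/(\pi n)^2=O(\mathcal N_n^{-2})\to 0$.

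For Part~4, the preceding estimates imply that $\widetilde I_{n_j}$ has the same limiting law as $I_{n_j}[4]/\sqrt{\Var(I_{n_j}[4])}$. Working out the explicit form of $I_n[4]$ after the diagonal reduction, one expresses it as a specific quadratic polynomial in a bounded number of second-chaos ``summary'' statistics $T_1^{(n)},\ldots,T_k^{(n)}$ built from $(a_\lambda,b_\lambda)_{\lambda\in\Lambda_n}$, whose coefficients depend on $\mu_n$ only through $\widehat\mu_n(4)$. Along a subsequence with $|\widehat\mu_{n_j}(4)|\to\eta$ extracted via Proposition~\ref{p:kw} (and, after a further extraction, with a definite sign $\widehat\mu_{n_j}(4)\to\pm\eta$), the covariance structure of the vector $(T_1^{(n_j)},\ldots,T_k^{(n_j)})$ converges to an explicit $\eta$-dependent limit; a joint CLT (legitimate because each $T_j^{(n)}$ is a sum of independent bounded-variance contributions indexed by $\Lambda_n$) yields convergence in law to a Gaussian vector, and the continuous mapping theorem then produces a limit law for $I_{n_j}[4]/\sqrt{\Var(I_{n_j}[4])}$ which decomposes into three mutually orthogonal blocks --- one ``radial'' giving $-2(C-2)$ and two ``angular'' giving $A$ and $B$ with relative weights $(1\pm\eta)/2$ --- producing the explicit formula $\mathcal J_\eta$ of \eqref{e:j}. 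The principal technical obstacle throughout is the high-chaos bound $\sum_{q\geq 3}\Var(I_n[2q])=o(E_n^2/\mathcal N_n^2)$, needed in both Part~2 and Part~4: it requires a sharp analysis of the two-point Kac--Rice kernel near the diagonal (where the joint Gaussian covariance degenerates and the conditional Jacobian must be integrated against an almost singular measure), together with a novel combinatorial moment formula controlling $\int_\T |r_n(x)|^k\,dx$ for the correlation function $r_n(x)=\frac{1}{\mathcal N_n}\sum_\lambda e^{i2\pi\langle\lambda,x\rangle}$, uniformly in the erratic arithmetic behaviour of $\mu_n$ recalled in Proposition~\ref{p:kw}.
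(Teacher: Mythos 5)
Your proposal is correct in outline and follows essentially the same route as the paper: Kac--Rice/Bulinskaya for finiteness and the mean, a Wiener chaos expansion with vanishing odd chaoses and the exact second-chaos (Berry-type) cancellation, an explicit fourth-chaos variance computation together with a CLT for second-chaos spectral statistics yielding $\mathcal{J}_\eta$, and control of the chaoses of order at least six via the near-diagonal two-point Kac--Rice kernel combined with correlation-moment bounds (realized in the paper through the singular/non-singular cube decomposition, the Leonov--Shiryaev formulae, and the Bombieri--Bourgain estimate giving $R_n(6)=o(\mathcal{N}_n^{-2})$). The steps you identify as the principal technical obstacles are precisely the ones the paper resolves in its Sections 5--7, so no change of strategy is needed.
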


\begin{remark}\label{r:postmain}{\rm

\begin{enumerate}

\item The arguments leading to the proof of \eqref{e:exp} show also that, for every measurable $A\subset \Tb$,
\begin{equation}\label{e:locexp}
\E[| \mathscr{I}_n\cap A |] =  {\rm Leb}(A)\times  \pi n,
\end{equation}
where `${\rm Leb}$' indicates the Lebesgue measure on the torus. The details are left to the reader.

\item For every $n\in S$, write $w(n) := \Var(I_n) \left( {E_n^2}/{\mathcal N_n^2}\right)^{-1}$. Standard arguments, based on compactness and on the fact that $\mu_n(4)\in [-1,1]$, yield that \eqref{e:variance} is equivalent to the following statement: for every $\{n_j\}\subset S$ such that $\mathcal{N}_{n_j}\to \infty$ and $|\mu_{n_j}(4)|\to \eta \in [0,1]$, one has that $w(n_j)\to d(\eta) := (3\eta^2+5)/128\pi^2$.

\item Relations \eqref{e:variance}--\eqref{e:dn} are {completely} new {and are {{} among} the main findings of the present paper; in particular, they} show that the asymptotic behaviour of the variance of $I_n$ is {\bf non-universal}. Indeed, when $\mathcal{N}_{n_j}\to \infty$, the fluctuations of the sequence $d_{n_j}$ depend on the chosen subsequence $\{n_j\}\subset S$, via the squared Fourier coefficients $\mu_{n_j}(4)^2$: in particular, the possible limit values of the sequence $\{d_{n_j}\}$ correspond to the whole interval $\left[\frac{5}{128\pi^2}, \frac{1}{16\pi^2}\right]$. As discussed in the sections to follow, such a non-universal behaviour {echoes} the findings of \cite{KKW}, in the framework of the length of nodal lines associated with arithmetic random waves. We will see that our derivation of the exact asymptotic relation \eqref{e:variance} follows a route that is different from the one exploited in \cite{KKW} --- as our techniques are based on chaos expansions, combinatorial cumulant formulae, as well as on a novel local Taylor expansion of the second order Kac-Rice kernel around the origin.

\item The support of the distribution of each variable $\mathcal{J}_\eta$ is the whole real line, but the distribution of $\mathcal{J}_\eta$ is {\it not} Gaussian (this follows e.g. by observing that law of $\mathcal{J}_\eta$ has exponential tails). As discussed in the forthcoming Section \ref{ss:cr}, similar non-central and non-universal second order fluctuations have been proved in \cite{MPRW} for the total nodal length of real arithmetic random waves. We will show below that this striking common feature originates from the same {\bf chaotic cancellation phenomenon} exploited in \cite{MPRW}, that is: {\it in the Wiener chaos expansion of the quantity $I_n$, the projection on the second chaos vanishes, and the limiting fluctuations of such a random variable are completely determined by its projection on the fourth Wiener chaos.} It will be clear from our analysis that, should the second chaotic projection of $I_n$ not disappear in the limit, then the order of ${\rm Var}(I_n)$ would be proportional to $E_n^2/\mathcal{N}_n$, as $\mathcal{N}_n\to \infty$.

\item Choosing $\epsilon_{n_j} \equiv \epsilon$ (constant sequence), one deduces from Point 3 of Theorem \ref{t:main} that the ratio $I_{n}/n$ converges in probability to $\pi$, whenever $\mathcal{N}_{n}\to \infty$. See e.g. \cite[p. 261]{D} for definitions.

\item It is easily checked (for instance, by computing the third moment) that the law of $\mathcal{J}_{\eta_0}$ differs from that of $\mathcal{J}_{\eta_1}$ for every $0\leq \eta_0< \eta_1\leq 1$. This fact implies that the sequence $\{| \mu_{n}(4)|\}$ dictates not only the asymptotic behaviour of the variance of $I_n$, but also the precise nature of its second order fluctuations.


\item Reasoning as in \cite[Theorem 1.2]{MPRW}, it is possible to suitably apply {\it Skorohod representation theorem} (see e.g. \cite[Chapter 11]{D}), in order to express relation \eqref{e:j} in a way that does not involve the choice of a subsequence $\{n_j\}$. We leave this routine exercise to the interested reader.

\end{enumerate}

}
\end{remark}

In the next section, we will discuss several explicit connections with the model of real arithmetic random waves studied in \cite{KKW, ORW, RW}.

\subsection{Complex zeros as nodal intersections}\label{ss:cr}

For simplicity, from now on we will write
\begin{equation}\label{e:tn}
T_n(x) := {\rm Re}(\Theta_n(x)), \quad \widehat{T}_n(x) := {\rm Im}(\Theta_n(x)),
\end{equation}
for every $x\in \T$ and $n\in S$; in this way, one has that
\begin{eqnarray*}
\mathscr{I}_n &=&T^{-1}_n(0)\cap \hT^{-1}(0) \quad \mbox{and} \quad I_n = |T^{-1}_n(0)\cap \hT^{-1}(0)|.
\end{eqnarray*}
We will also adopt the shorthand notation $${\bf T}_n := \{ {\bf T}_n(x) =(T_n(x),  \widehat{T}_n(x)) : x\in \T\}, \quad n\in S.$$

Our next statement yields a complete characterization of the distribution of the vector-valued process ${\bf T}_n$, as a two-dimensional field whose components are independent and identically distributed real arithmetic random waves, in the sense of \cite{KKW, MPRW, RW}.

\begin{proposition}\label{p:realim} Fix $n \in S$. Then, $T_n$ and $\hT$ are two real-valued independent centered Gaussian fields such that
\begin{equation}\label{e:cov}
\E\Big[T_n(x)T_n(y)\Big] = \E\Big[\hT(x)\hT(y)\Big] =  \frac{1}{\mathcal{N}_n} \sum_{\lambda\in \Lambda_n} \cos(2\pi \langle \lambda, x-y\rangle )=:r_n(x-y).
\end{equation}
Also, there exist two collections of complex random variables
\begin{equation}\label{e:a}
 {\bf A}(n) = \{ a_\lambda : \lambda \in \Lambda_n\} \quad \mbox{and} \quad \widehat{\bf A}(n)=\{ \widehat{a}_\lambda : \lambda \in \Lambda_n\},
\end{equation}
with the following properties:
\begin{itemize}
\item[\rm (i)] $ {\bf A}(n)$ and $\widehat{\bf A}(n)$ are stochastically independent and identically distributed as random vectors indexed by $\Lambda_n$;

\item[\rm (ii)] For every $\lambda\in \Lambda_n$, $a_\lambda$ is a complex-valued Gaussian random variable whose real and imaginary parts are independent Gaussian random variables with mean zero and variance $1/2$;

\item[\rm (iii)] If $\lambda \notin \{\sigma, -\sigma\}$, then $a_\lambda$ and $a_\sigma$ are stochastically independent;

\item[\rm (iv)] $a_\lambda = \overline{a_{-\lambda}}$;

\item[\rm (v)] For every $x\in \T$,
\begin{equation}\label{e:realim}
T_n(x) = \frac{1}{\sqrt{\mathcal{N}_n}}\sum_{\lambda\in \Lambda_n} a_\lambda e_\lambda(x), \,\,\, \mbox{and} \,\,\,\,\, \widehat{T}_n(x) = \frac{1}{\sqrt{\mathcal{N}_n}}\sum_{\lambda\in \Lambda_n} \widehat{a}_\lambda e_\lambda(x).
\end{equation}

\end{itemize}
\end{proposition}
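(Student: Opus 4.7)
The overall strategy is to construct the collections $\mathbf{A}(n)$ and $\widehat{\mathbf{A}}(n)$ explicitly from the original Gaussian coefficients $\{v_\lambda\}$, and then to verify each claimed property by direct covariance computations, relying throughout on the symmetry $\Lambda_n = -\Lambda_n$. Specifically, I would set
$$
a_\lambda := \tfrac{1}{2}\bigl(v_\lambda + \overline{v_{-\lambda}}\bigr), \qquad \widehat{a}_\lambda := \tfrac{1}{2i}\bigl(v_\lambda - \overline{v_{-\lambda}}\bigr),
$$
so that (iv) is immediate. For (v), I would write $T_n = \tfrac{1}{2}(\Theta_n + \overline{\Theta_n})$ and $\widehat{T}_n = \tfrac{1}{2i}(\Theta_n - \overline{\Theta_n})$, expand using the definition of $\Theta_n$, and re-index the conjugate sum via $\lambda \mapsto -\lambda$ (legitimate precisely because $\Lambda_n = -\Lambda_n$) to collect the coefficients of $e_\lambda(x)$ into the prescribed form.

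Next, I would decompose $v_\lambda = X_\lambda + i Y_\lambda$, where $\{X_\lambda, Y_\lambda : \lambda \in \Lambda_n\}$ are i.i.d.\ standard Gaussians. This exhibits $a_\lambda$ and $\widehat{a}_\lambda$ as explicit real linear combinations, e.g.\ ${\rm Re}(a_\lambda) = (X_\lambda+X_{-\lambda})/2$ and ${\rm Im}(a_\lambda) = (Y_\lambda-Y_{-\lambda})/2$, with analogous formulas for $\widehat{a}_\lambda$. Property (ii) then reduces to a one-line variance computation; (iii) follows because $a_\lambda$ depends only on the pair $\{v_\lambda, v_{-\lambda}\}$, so disjoint pairs give independent variables; and the independence part of (i) reduces to checking that $\Cov(a_\lambda, \widehat{a}_\sigma) = 0$ for the three relevant cases $\sigma \notin \{\lambda, -\lambda\}$, $\sigma = \lambda$, $\sigma = -\lambda$, each handled by direct bookkeeping with the $X_\cdot, Y_\cdot$.

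The equidistribution statement inside (i) is the only step that requires care, since the real/imaginary decompositions of $a_\lambda$ and $\widehat{a}_\lambda$ look asymmetric at first sight. I would resolve it by comparing the two Hermitian covariance kernels: a short calculation shows that both $\{a_\lambda\}$ and $\{\widehat{a}_\lambda\}$ satisfy $\E[a_\lambda \overline{a_\sigma}] = \delta_{\lambda,\sigma}$ and $\E[a_\lambda a_\sigma] = \delta_{\lambda,-\sigma}$, with the identical relations for $\widehat a$. Since both families are centered complex Gaussian, matching their full (Hermitian and non-Hermitian) covariance structures upgrades to equality in distribution as random elements of $\C^{\Lambda_n}$. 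Finally, relation \eqref{e:cov} drops out from $\E[a_\lambda \overline{a_\sigma}] = \delta_{\lambda,\sigma}$ combined with one more application of $\Lambda_n = -\Lambda_n$ to rewrite $\sum_\lambda e^{i2\pi\langle \lambda, x-y\rangle}$ as a sum of cosines, and independence of $T_n$ and $\widehat{T}_n$ is immediate from (i) together with (v). No genuine analytic difficulty arises — the argument is essentially Gaussian bookkeeping — and the only place where one must be careful not to be misled by surface-level asymmetries is the verification of equidistribution in (i).
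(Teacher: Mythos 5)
Your proposal is correct and follows essentially the same route as the paper: the paper also reduces everything to direct Gaussian covariance bookkeeping with the $v_\lambda$'s and the symmetry $\Lambda_n=-\Lambda_n$, obtaining the families ${\bf A}(n),\widehat{\bf A}(n)$ by expanding $T_n,\hT$ in the basis $\{e_\lambda\}$ — which yields exactly your explicit coefficients $a_\lambda=\tfrac12(v_\lambda+\overline{v_{-\lambda}})$, $\widehat a_\lambda=\tfrac1{2i}(v_\lambda-\overline{v_{-\lambda}})$. Your write-up merely makes the paper's sketched computation explicit, and all the verifications (including the equidistribution step via matching Hermitian and pseudo-covariances) are sound.
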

\noindent\begin{proof} One need only show that $T_n$ and $\hT$ are two centered independent Gaussian fields such that \eqref{e:cov} holds; the existence of the two families ${\bf A}(n)$ and $\widehat{\bf A}(n)$ can consequently be derived by first expanding $T_n$ and $\hT$ in the basis $\{e_\lambda : \lambda\in \Lambda_n\}$, and then by explicitly computing the covariance matrix of the resulting Fourier coefficients. Relation \eqref{e:cov} follows from a direct computation based on the symmetric structure of the set $\Lambda_n$, once it is observed that $T_n$ and $\hT$ can be written in terms of the complex Gaussian random variables $\{v_\lambda\}$ appearing in \eqref{e:defrf}, as follows:
\begin{eqnarray*}
T_n(x) &=& \frac{1}{\sqrt{\mathcal{N}_n}} \sum_{\lambda\in \Lambda_n} \Big\{{\rm Re}(v_\lambda)\cos(2\pi \langle \lambda, x\rangle)-{\rm Im}(v_\lambda)\sin(2\pi \langle \lambda, x\rangle)\Big\},\\
\hT(x) &=& \frac{1}{\sqrt{\mathcal{N}_n}} \sum_{\lambda\in \Lambda_n} \Big\{{\rm Im}(v_\lambda)\cos(2\pi \langle \lambda, x\rangle)+{\rm Re}(v_\lambda)\sin(2\pi \langle \lambda, x\rangle)\Big\}.
\end{eqnarray*}
\end{proof}

\medskip

The fact that $r_n$ only depends on the difference $x-y$ confirms in particular that ${\bf T}_n$ is a two-dimensional Gaussian stationary process.

\begin{assumption}\label{a:ass}{\rm Without loss of generality, for the rest of the paper we will assume that, for $n\neq m$, the two Gaussian families
$$
{\bf A}(n)\cup \widehat{\bf A}(n)\quad \mbox{and} \quad {\bf A}(m) \cup \widehat{\bf A}(m)
$$
are stochastically independent; this is the same as assuming that the two vector-valued fields ${\bf T}_n$ and ${\bf T}_m$ are stochastically independent.

}
\end{assumption}

As anticipated, relation \eqref{e:realim} implies that $T_n$ and $\hT$ are two independent and indentically distributed {\it real arithmetic random waves} of order $n$, such as the ones introduced in \cite{RW}, and then further studied in \cite{KKW, MPRW, ORW, RW2}. We recall that, according to \cite{C}, with probability one both $T_n^{-1}(0)$ and $\hT^{-1}(0)$ are unions of rectifiable curves, called {\bf nodal lines}, containing a finite set of isolated singular points. The following statement yields a further geometric characterisation of $\mathscr{I}_n$ and $I_n$: its proof is a direct by-product of the arguments involved in the proof of Part 1 of Theorem \ref{t:main}.

\begin{proposition}\label{p:inter} Fix $n\in S$. Then, with probability one the nodal lines of $T_n$ and $\hT$ have a finite number of {isolated intersection points}, whose collection coincides with the set $\mathscr{I}_n$; moreover, $\mathscr{I}_n$ does not contain any singular point for ${\bf T}_n$.
\end{proposition}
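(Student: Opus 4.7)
The argument reduces to a standard application of Bulinskaya's lemma to the $\R^2$-valued stationary Gaussian field ${\bf T}_n = (T_n, \hT)$, followed by the implicit function theorem. The preliminary step is to verify that, for every $x\in\T$, the jointly Gaussian vector $({\bf T}_n(x),J_{{\bf T}_n}(x))\in\R^2\times\R^{2\times 2}$ is non-degenerate. By stationarity it suffices to work at $x=0$. By Proposition \ref{p:realim}, $T_n$ and $\hT$ are independent with common covariance $r_n$ given by \eqref{e:cov}, so the task boils down to analyzing the covariance of $(T_n(0),\nabla T_n(0))$. Differentiating $r_n$ and using the $\pi/2$-rotational invariance of $\Lambda_n$, one finds $\partial_i r_n(0)=0$ and
\begin{equation*}
\partial_i\partial_j r_n(0)
= -\frac{(2\pi)^2}{\mathcal{N}_n}\sum_{\lambda\in\Lambda_n}\lambda_i\lambda_j
= -2\pi^2 n\,\delta_{ij},
\end{equation*}
so $T_n(0)$ is independent of $\nabla T_n(0)$, and the latter is an isotropic Gaussian vector with covariance $2\pi^2 n\cdot I_2$. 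Combined with the independence of $T_n$ and $\hT$, this gives full rank of the $6$-dimensional Gaussian vector $({\bf T}_n(x),J_{{\bf T}_n}(x))$ at every $x\in\T$.

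Next, I would invoke Bulinskaya's lemma for Gaussian square maps (see e.g.\ Adler--Taylor or Aza\"{\i}s--Wschebor): under the non-degeneracy just established and the $C^\infty$-smoothness of ${\bf T}_n$, with probability one the origin is a regular value of ${\bf T}_n$, in the sense that the set $\mathscr{I}_n = {\bf T}_n^{-1}(0)$ consists of finitely many isolated points, at each of which $J_{{\bf T}_n}(x)$ is invertible. The identity $\mathscr{I}_n = T_n^{-1}(0)\cap\hT^{-1}(0)$ is built into \eqref{e:zeroset}.

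Finally, the geometric interpretation of Proposition \ref{p:inter} follows from the implicit function theorem. At every $x\in\mathscr{I}_n$, invertibility of $J_{{\bf T}_n}(x)$ forces $\nabla T_n(x)$ and $\nabla \hT(x)$ to be both nonzero and linearly independent. Consequently, in a neighbourhood of $x$ the two nodal sets $T_n^{-1}(0)$ and $\hT^{-1}(0)$ are smooth arcs crossing transversally, so $x$ is an isolated intersection point of the nodal lines; combining this with Cheng's theorem \cite{C} applied separately to $T_n$ and $\hT$, one concludes that $\mathscr{I}_n$ is finite and that no $x\in\mathscr{I}_n$ is a singular point of ${\bf T}_n$. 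I expect the only substantive step to be the non-degeneracy computation, which however is dictated entirely by the $\pi/2$-rotation symmetry of $\Lambda_n$; the rest is a direct application of well-documented Gaussian/differential-topology machinery. As a by-product, the same line of reasoning also yields Part~1 of Theorem \ref{t:main}, so that the argument above can be carried out as a single unified proof.
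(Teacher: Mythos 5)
Your argument is correct and follows essentially the same route as the paper, which obtains Proposition \ref{p:inter} as a by-product of the proof of Part 1 of Theorem \ref{t:main} (see Lemma \ref{lemma approx}): non-degeneracy of the Gaussian vector, \cite[Proposition 6.5]{AW} (a Bulinskaya-type statement) to rule out degenerate zeros almost surely, and the inverse function theorem to obtain isolation, finiteness and transversality of the nodal crossings. The only caveat is your closing claim that the same reasoning yields all of Part 1 of Theorem \ref{t:main}: it does give the a.s.\ finiteness, but the expectation formula \eqref{e:exp} requires the Kac--Rice/chaotic-expansion computation, which your argument does not supply.
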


In view of Proposition \ref{p:inter}, it is eventually instructive to focus on the random {\bf nodal lengths}
$$
L_n := {\rm length}\, (T_n^{-1}(0)), \quad n\in S,
$$
for which we will present a statement collecting some of the most relevant findings from \cite{RW} (Point 1), \cite{KKW} (Point 2) and \cite{MPRW} (Point 3).

\begin{theorem}[\bf See \cite{RW, KKW, MPRW}]\label{t:lmain} \begin{enumerate}

\item[\rm 1.] For every $n\in S$
\begin{equation}\label{e:lexp}
\E[L_n] = \frac{E_n}{2\sqrt{2}}.
\end{equation}

\item[\rm 2.]
As $\mathcal{N}_n \to \infty$,
\begin{equation}\label{e:lvariance}
\Var(L_n) = c_n \times \frac{E_n}{\mathcal N_n^2} \, (1+o(1)),
\end{equation}
where
\begin{equation}\label{e:cn}
c_n := \frac{1 + \widehat \mu_n(4)^2}{512}.
\end{equation}

\item[\rm 3.] Let $\lbrace n_j\rbrace\subset S$ be such that $\mathcal N_{n_j}\to +\infty$ and $|\widehat \mu_{n_j}(4)|\to \eta \in [0,1]$. Then,
\begin{eqnarray}\notag
\widetilde L_{n_j} &:=& \frac{L_{n_j} -\E[L_{n_j}] }{\sqrt{\Var(L_{n_j}) }}\\  &\overset{\rm law}{\Longrightarrow}&
\mathcal{M}_\eta := \frac{1}{2\sqrt{1+\eta^2}} (2 - (1+\eta) X_1^2-(1-\eta) X_2^2),\label{e:m}
\end{eqnarray}
where $(X_1,X_2)$ is a standard Gaussian vector of $\R^2$.

\end{enumerate}

\end{theorem}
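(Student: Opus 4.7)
The three statements are classical, drawn respectively from Rudnick--Wigman \cite{RW}, Krishnapur--Kurlberg--Wigman \cite{KKW}, and Marinucci--Peccati--Rossi--Wigman \cite{MPRW}. My plan is to outline, in order, the main ingredients of each argument.

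\textbf{Part 1 (mean).} I would apply the one-point Kac--Rice formula to the stationary Gaussian field $T_n$. By Proposition \ref{p:realim}, $T_n(x)\sim N(0,1)$, and a direct computation based on the symmetry of $\Lambda_n$ (invariance under $\lambda\mapsto-\lambda$ and under a coordinate swap) shows that $\nabla T_n(x)$ has independent coordinates with common variance $E_n/2$, and is independent of $T_n(x)$ at every fixed $x$ (because $\nabla r_n(0)=0$). Hence
\[
\E[L_n]=\int_{\mathbb T} p_{T_n(x)}(0)\,\E\big[\|\nabla T_n(x)\|\big]\,dx=\frac{1}{\sqrt{2\pi}}\cdot\sqrt{\frac{E_n}{2}}\cdot\sqrt{\frac{\pi}{2}},
\]
which delivers the formula \eqref{e:lexp}. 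The non-degeneracy of $\nabla T_n(x)$ at every point justifies applying Kac--Rice in the form of \cite{C}.

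\textbf{Part 2 (variance).} I would expand $L_n$ into its Wiener--It\^o chaos series $L_n=\sum_{q\geq 0}L_n[2q]$ with respect to the Gaussian family $\{a_\lambda\}$, obtained by approximating the Kac--Rice integrand $\delta_0(T_n(x))\,\|\nabla T_n(x)\|$ by a smooth kernel, expanding in Hermite polynomials, and passing to the limit. Only even chaoses contribute, by the symmetry $T_n\mapsto -T_n$. The crucial phenomenon --- first isolated in \cite{MPRW} --- is the \emph{chaotic cancellation} $L_n[2]\equiv 0$, due to the vanishing of a specific coefficient in the Hermite expansion of the Euclidean norm on $\mathbb R^2$. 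The leading term therefore comes from $L_n[4]$: writing $\Var(L_n[4])$ as a sum over quadruples $(\lambda_1,\lambda_2,\lambda_3,\lambda_4)\in\Lambda_n^4$ satisfying $\lambda_1+\lambda_2+\lambda_3+\lambda_4=0$, I would separate the \emph{trivial} contribution (where the quadruple reduces to two pairs $\lambda_i=-\lambda_j$) from the \emph{non-trivial} remainder, and show that the latter is proportional to $|\widehat\mu_n(4)|^2$ to leading order. Combining the constants produces the coefficient $\frac{1+\widehat\mu_n(4)^2}{512}$, while a separate argument bounds every chaos of order $2q\geq 6$ by $o(E_n/\mathcal N_n^2)$.

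\textbf{Part 3 (limit theorem).} From Part 2, the normalized nodal length $\widetilde L_{n_j}$ has the same limit in distribution as $L_{n_j}[4]/\sqrt{\Var(L_{n_j})}$. I would write $L_{n_j}[4]$ explicitly as a quadratic form in squares of suitable real Gaussian coordinates extracted from $\{a_\lambda\}$, and use the convergence $|\widehat\mu_{n_j}(4)|\to\eta$ to identify the asymptotic spectrum of that form. In the limit the two eigenvalues collapse to $(1+\eta)/2$ and $(1-\eta)/2$, each carrying the combinatorial multiplicity needed to reproduce an independent $\chi^2_1$ variable, and after normalization one recovers
\[
\mathcal M_\eta=\frac{1}{2\sqrt{1+\eta^2}}\left(2-(1+\eta)X_1^2-(1-\eta)X_2^2\right).
\]
The principal obstacle --- and the main technical contribution of \cite{MPRW} --- is precisely this asymptotic diagonalization of $L_{n_j}[4]$, which requires a careful combinatorial analysis of fourfold spectral correlations on $\Lambda_n$, together with uniform $L^2$ control of the higher chaotic projections; both are delivered through combinatorial moment bounds and a detailed treatment of lattice-point equations arising from the symmetric structure of $\Lambda_n$.
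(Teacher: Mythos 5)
The paper does not actually prove Theorem \ref{t:lmain}: it is quoted from \cite{RW}, \cite{KKW} and \cite{MPRW} purely for comparison with Theorem \ref{t:main}, so there is no in-paper argument to measure your outline against. Taken on its own terms, your sketch points at the right references and the right broad strategy (Kac--Rice for the mean, Wiener chaos for the variance and the non-central limit). Note in passing that your Part 1 computation evaluates to $\sqrt{E_n}/(2\sqrt{2})$, not $E_n/(2\sqrt{2})$ as printed in \eqref{e:lexp}; your value is the correct one (the printed formula is a typo, as one also sees from the fact that the variance in \eqref{e:lvariance} scales like $E_n$, i.e.\ like the square of $\sqrt{E_n}$), so you should not claim that your display ``delivers'' the formula as stated.

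There is, however, one genuinely wrong step in your outline. You attribute the cancellation $L_n[2]=0$ to ``the vanishing of a specific coefficient in the Hermite expansion of the Euclidean norm on $\R^2$''. That coefficient does not vanish: for a standard Gaussian vector $Z=(Z_1,Z_2)$ one has $\E[\|Z\|\,H_2(Z_1)]=\tfrac{1}{2}\sqrt{\pi/2}\neq 0$, and likewise the second Hermite coefficient of the approximate Dirac mass is $\beta_2=-1/\sqrt{2\pi}\neq 0$. What vanishes is the resulting \emph{linear combination} of integrals: up to a nonzero constant,
$L_n[2]\propto \tfrac{1}{2}\int_\T\big(H_2(\widetilde\partial_1T_n)+H_2(\widetilde\partial_2T_n)\big)\,dx-\int_\T H_2(T_n)\,dx = \tfrac{1}{E_n}\int_\T\|\nabla T_n\|^2\,dx-\int_\T T_n^2\,dx$,
which is identically zero by Green's identity together with the eigenfunction equation $\Delta T_n=-E_nT_n$. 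This is exactly the mechanism the present paper uses for the complex case (proof of \eqref{chaos 2} in Lemma \ref{berry's cancellation}), and it is the one you need; an argument resting on a vanishing Hermite coefficient of the norm would not get off the ground. A smaller historical quibble: the variance asymptotics of Part 2 were first obtained in \cite{KKW} by a direct analysis of the Kac--Rice two-point correlation function, not by chaos expansions; the chaotic route you describe is that of \cite{MPRW}, which recovers \eqref{e:lvariance} and adds the limit law \eqref{e:m}.
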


As discussed e.g. in \cite{KKW,RW}, relations \eqref{e:lexp}--\eqref{e:lvariance} yield immediately a law of large numbers analogous to \eqref{e:wlln}. We stress that Theorem \ref{t:main} and Theorem \ref{t:lmain} share three common striking features (explained below in terms of a common chaotic cancellation phenomenon), namely: (a) an {inverse quadratic dependence} on $\mathcal{N}_n$, as displayed in formulae \eqref{e:variance} and \eqref{e:lvariance}, (b)  non-universal variance fluctuations, determined by the quantities $d_n$ and $c_n$ defined in \eqref{e:dn} and \eqref{e:cn}, respectively, and (c) non-universal and non-central second order fluctuations (see \eqref{e:j} and \eqref{e:m}).

The estimate \eqref{e:lvariance} largely improves upon a conjecture formulated by Rudnick and Wigman in \cite{RW}, according to which one should have $\Var(L_n) = O(E_n/\mathcal{N}_n)$. The fact that the natural leading term $E_n/\mathcal{N}_n$ actually disappears in the high-energy limit, thus yielding \eqref{e:lvariance}, is connected to some striking discoveries by Berry \cite{Berry 2002}, discussed in the forthcoming section.

\subsection{More about relevant previous work}

\underline{\it Random waves and cancellation phenomena.} To the best of our knowledge, the first systematic analysis of phase singularities in wave physics appears in the seminal contribution by Nye and Berry \cite{NB}. Since then, zeros of complex waves have been the object of an intense study in a variety of branches of modern physics, often under different names, such as {\it nodal points}, {\it wavefront dislocations}, {\it screw dislocations}, {\it optical vortices} and {\it topological charges}. The reader is referred e.g. to \cite{D-survey, N-survey, UR-survey}, and the references therein, for detailed surveys on the topic, focussing in particular on optical physics, quantum chaos and quantum statistical physics.

One crucial reference for our analysis is Berry \cite{Berry 1978}, where the author studies several statistical quantities involving singularities of random waves on the plane. Such an object, usually called the (complex) {\bf Berry's random wave model} (RWM), is defined as a complex centered Gaussian field, whose real and imaginary parts are independent Gaussian functions on the plane, with covariance
\begin{equation}\label{e:rwm}
r_{\rm RWM}(x,y) := J_0\left(\sqrt{E}\,  \| x-y\|\right), \quad x,y\in \R^2,
\end{equation}
where $E>0$ is an energy parameter, and $J_0$ is the standard Bessel function (see also \cite{Berry 1977}).  Formula \eqref{e:rwm} implies in particular that Berry's RWM is {\bf stationary and isotropic}, that is: its distribution is invariant both with respect to translations and rotations. As discussed e.g. in \cite[Section 1.6.1]{KKW}, if $\{n_j\} \subset S$ is a sequence such that $\mathcal{N}_{n_j}\to \infty$ and $\mu_{n_j}$ converges weakly to the uniform measure on the circle, then, for every $x\in \Tb$ and using the notation \eqref{e:cov},
\begin{equation}\label{e:scaling}
r_{n_j}\left(\sqrt{\frac{E}{n_j}}\cdot \frac{x}{2\pi}\right) \longrightarrow r_{\rm RWM}(x),
\end{equation}
showing that Berry's RWM is indeed the local scaling limit of the arithmetic random waves considered in the present paper.

Reference \cite{Berry 2002}, building upon previous findings of Berry and Dennis \cite{BD}, contains the following remarkable results: {\bf (a)} the expected nodal length per unit area of the real RWM equals $\sqrt{E} /(2\sqrt{2})$ \cite[Section 3.1]{Berry 2002}, {\bf (b)} as $E\to \infty$ the variance of the nodal length at Point {\bf (a)} is proportional to $\log E$  \cite[Section 3.2]{Berry 2002}, {\bf (c)} the expected number of phase singularities for unit area of the complex RWM is $E /(4\pi)$  \cite[Section 4.1]{Berry 2002}, and {\bf (d)} as $E\to \infty$ the variance of the number of singularities at Point {\bf (c)} is proportional to $E \log E$  \cite[Section 4.2]{Berry 2002}. Point {\bf (a)} and {\bf (c)} are perfectly consistent with \eqref{e:lexp} and \eqref{e:exp}, respectively. Following \cite{Berry 2002}, the estimates at Points {\bf (b)} and {\bf (d)} are due to an `obscure' cancellation phenomenon, according to which the natural leading term in variance (that should be of the order of $\sqrt{E}$ and $E^{3/2}$, respectively) cancels out in the high-energy limit. The content of Point {\bf (b)} has been rigorously confirmed by Wigman \cite{wig} in the related model of real {\it random spherical harmonics}, whose scaling limit is again the real RWM. See also \cite{ALW}.

As explained in \cite{KKW}, albeit improving conjectures from \cite{RW}, the order of the variance established in \eqref{e:lvariance} differs from that predicted in {\bf (b)}: this discrepancy is likely due to the fact that, differently from random spherical harmonics, the convergence in \eqref{e:scaling} does not take place uniformly over suitable regions. As already discussed, in \cite{MPRW} it was shown that the asymptotic relation \eqref{e:lvariance} is generated by a remarkable chaotic cancellation phenomenon, which also explains the non-central limit theorem stated in \eqref{e:m}.

The main result of the present paper (see Theorem \ref{t:main}) confirms that such a chaotic cancellation continues to hold for phase singularities of complex arithmetic waves, and that it generates non-universal and non-central second order fluctuations for such a random quantity. This fact lends further evidence to the natural conjecture that cancellation phenomena analogous to those described in {\cite{Berry 2002, wig, KKW, MPRW, Ro}} should hold for global quantities associated with the zero set of Laplace eigenfunctions on general manifolds, as long as such quantities can be expressed in terms of some area/co-area integral formula.

We stress that the fact that the order of the variance stated in \eqref{e:variance} differs from the one predicted at Point {\bf (d)} above, can once again be explained by the non-uniform nature of the scaling relation \eqref{e:scaling}.

\medskip

\noindent\underline{\it Leray measures and occupation densities}. While the present paper can be seen as a natural continuation of the analysis developed in \cite{KKW, MPRW}, the methods implemented below will substantially diverge from those in the existing literature. One fundamental difference stems from the following point: in order to deal with strong correlations between vectors of the type $(T_n(x), \partial/\partial_{1} T_n(x), \partial/\partial_{2} T_n(x))$ and $(T_n(y), \partial/\partial_{1} T_n(y), \partial/\partial_{2} T_n(y))$, $x\neq y$, the authors of \cite{KKW} extensively use results from \cite{ORW} (see in particular \cite[Section 4.1]{KKW}) about the fluctuations of the {\bf Leray measure}
$$
A_n := \int_{\Tb} \delta_0(T_n(x)) \, dx,
$$
which is defined as the limit in $L^2(\P)$ of the sequence $k\mapsto \int_{\Tb} \varphi_k(T_n(x)) \, dx$, with $\{\varphi_k\}$ a suitable approximation of the identity; on the other hand, following such a route in the framework of random phase singularities is impossible, since the formal quantity
$$
B_n:= \int_{\Tb} \delta_{(0,0)}(T_n(x), \hT(x)) \, dx
$$
{\it cannot}  be defined as an element of $L^2(\P)$. The technical analysis of singular points developed in Section \ref{proof varianza sec} is indeed how we manage to circumvent this difficulty.  We observe that, in the parlance of stochastic calculus, the quantity $A_n$ (resp. $B_n$) is the {\bf occupation density at zero} of the random field $T_n$ (resp. ${\bf T}_n$) --- in particular, the fact that $A_n$ is well-defined  in $L^2(\P)$ and $B_n$ {is not} --- follows from the classical criterion stated in \cite[Theorem 22.1]{GH}, as well as from the relations
\begin{equation}\label{e:explosion}
\int_\Tb \frac{dx}{\sqrt{1-r^2_n(x)}}  <\infty \quad \mbox{and} \quad \int_\T \frac{dx}{1-r^2_n(x)}  =\infty,
\end{equation}
where we have used the fact that, according e.g. to \cite[Lemma 5.3]{ORW}, the mapping $x\mapsto (1-r^2_n(x))^{-1}$ behaves like a multiple of $1/\|x-x_0\|^2$ around any point $x_0$ such that $r_n(x_0)=\pm1$.

\medskip

\noindent{\underline{\it Nodal intersections of arithmetic random waves with a fixed curve}.
{{} A natural problem related to the subject of our paper is that of studying the number of} nodal intersections with a fixed {{} deterministic} curve $\mathcal C\subset \mathbb T$ whose length equals $L$, i.e. number of zeroes of $T_n$ that lie on $\mathcal C$:
$$
\mathcal Z_n := T_n^{-1}(0)\cap \mathcal C.
$$
In \cite{RW2}, the case where $\mathcal C$ is a smooth curve with nowhere zero-curvature {has been investigated}. The expected number of nodal intersections is $\E[|\mathcal Z_n|]=(\pi \sqrt 2)^{-1}\times E_n \times L$, hence proportional to the length $L$ of the curve times the wave number, independent of the geometry. The asymptotic behaviour of the nodal intersections variance in the high energy limit is a subtler matter: it {depends} on both the angular distribution of lattice points lying on the circle with radius corresponding to the given wavenumber, in particular on the sequence of measures $\lbrace \mu_n\rbrace$, and {on} the geometry of $\mathcal C$. The asymptotic distribution of $|\mathcal Z_n|$ is}{analyzed in \cite{RoW}}. {See \cite{Ma} for the case where $\mathcal C$ is a segment.}


\medskip

\noindent\underline{\it Zeros of random analytic functions/Systems of polynomials}. To the best of our expertise, our limit result \eqref{e:j} is the first non-central limit theorem for the number of zeros of random complex analytic functions defined on some manifold $\mathcal{M}$. As such, our findings should be contrasted with the works by Sodin and Tsirelson \cite{ST1, ST2}, where one can find central limit results for local statistics of zeros of analytic functions corresponding to three different models (elliptic, flat and hyperbolic). As argued in \cite[Section 1.6.4]{wig}, these results are roughly comparable to those one would obtain by studying zeros of complex random spherical harmonics, for which a central high-energy behaviour is therefore likely to be expected. References \cite{SZ1, SZ2}, by Shiffman and Zelditch, contain central limit result for the volume of the intersection of the zero sets of independent Gaussian sections of high powers of holomorphic line bundles on a K\"ahler manifold of a fixed dimension.

In view of Proposition \ref{p:inter}, our results have to be compared with works dealing with the number of roots of random system of polynomials.
{The first important result about the number of roots of random systems is due to Shub and Smale \cite{ss},
where the authors compute the expectation of the number of roots of a square system with independent
centered Gaussian coefficients with a particular choice of the variances that makes the distribution of the polynomials invariant under the action of the orthogonal group of the parameter space.
This model is called the {\bf Shub-Smale model}.
Later, Edelman and Kostlan, see \cite{k} and references therein, and Aza\"is and Wschebor \cite{aw-pol} extend these results to more general Gaussian distributions.
Wschebor \cite{w} studies the asymptotic for the variance of the number of roots of a Shub-Smale system in the case where the number of equations and variables tends to infinity.
Armentano and Wschebor \cite{ar-w} consider the expectation of non-centered (perturbed) systems.
McLennan \cite{mcl} studies the expected number of roots of multihomogeneous systems. Rojas \cite{ro} consider the expected number of roots of certain sparse systems. }

%
%
%
%
%
%

\subsection{Short plan of the paper}

{Section \ref{s:prelim} gathers some preliminary results that will be needed in the sequel. In Section \ref{s:approach} we explain the main ideas and steps of the proof of our main result (Theorem \ref{t:main}). Finally, the remaining sections are devoted to the detailed proofs. In particular, we collect in Section \ref{appendix} some technical computations and proofs of intermediate results.
}

\subsection{Acknowledgments}

This research was supported by the grant F1R-MTH-PUL-15CONF (CONFLUENT) at the University of Luxembourg (Federico Dalmao and Ivan Nourdin), by the grant F1RMTH-PUL-15STAR (STARS) at the University of Luxembourg (Giovanni Peccati and Maurizia Rossi) and by the FNR grant O17/11756789/FoRGES (Giovanni Peccati).

The authors heartily thank Domenico Marinucci for suggesting the problem studied in this paper, and for several inspiring discussions.

\section{Some preliminary result}\label{s:prelim}

We will now present some useful notions and results connected to Wiener chaos, gradients, combinatorial moment formulae and arithmetic estimates.

\subsection{Wiener chaos }\label{ss:wiener}

Let $\{H_k : k=0,1,...\}$ be the sequence of {\bf Hermite polynomials} on $\mathbb{R}$, recursively defined as follows: $H_0 \equiv 1$, and, for $k\geq 1$,

 $$H_{k}(t) = tH_{k-1}(t) - H'_{k-1}(t),\qquad t\in \R.$$
It is a standard fact that the collection $\mathbb{H} := \{H_k/\sqrt{k!} : k\ge 0\}$ is a complete orthonormal system for $$L^2(\mathbb{R}, \mathscr{B}(\mathbb{R}), \gamma) :=L^2(\gamma),$$ where $\gamma(dt):= \phi(t)dt =\frac{ e^{-t^2/2}}{\sqrt{2\pi}} dt$ is the standard Gaussian measure on the real line. By construction, for every $k\geq 0$, one has that
\begin{equation}\label{e:sym}
H_{2k}(-t) = H_{2k}(t), \quad \mbox{and} \quad H_{2k+1}(-t) = -H_{2k+1}(t), \quad t\in \R.
\end{equation}

\medskip

In view of Proposition \ref{p:realim} (recall also Assumption \ref{a:ass}), every random object considered in the present paper is a measurable functional of the family of complex-valued Gaussian random variables $$  \bigcup_{n\in S} \Big( {\bf A}(n) \cup \widehat{{\bf A}}(n) \Big),$$ where ${\bf A}(n) $ and $\widehat{{\bf A}}(n)$ are defined in \eqref{e:a}. Now define the space ${\bf A}$ to be the closure in $L^2(\mathbb{P})$ of all real finite linear combinations of random variables $\xi$ of the form $$\xi = c_1 (z \, a_\lambda + \overline{z} \, a_{-\lambda}) + c_2(u \, \widehat{a}_\tau + \overline{u} \, \widehat{a}_{-\tau})$$ where $\lambda, \tau \in \mathbb{Z}^2$, $z,u\in \mathbb{C}$ and $c_1, c_2\in \R$. The space ${\bf A}$ is a real centered Gaussian Hilbert subspace
of $L^2(\mathbb{P})$.

\begin{defn}\label{d:chaos}{\rm For a given integer $q\ge 0$, the $q$-th {\bf Wiener chaos} associated with ${\bf A}$, denoted by $C_q$, is the closure in $L^2(\mathbb{P})$ of all real finite linear combinations of random variables of the type
$$
\prod_{j=1}^k H_{p_j}(\xi_j),
$$
with $k\ge 1$, where the integers $p_1,...,p_k \geq 0$ verify $p_1+\cdots+p_k = q$, and $(\xi_1,...,\xi_k)$ is a centered standard real Gaussian vector contained in ${\bf A}$ (so that $C_0 = \mathbb{R}$).}
\end{defn}

In view of the orthonormality and completeness of $\mathbb{H}$ in $L^2(\gamma)$, it is not difficult to show that $C_q \,\bot\, C_{q'}$ (where the orthogonality holds in  $L^2(\mathbb{P})$) for every $q\neq q'$, and moreover
\begin{equation*}
L^2(\Omega, \sigma({\bf A}), \mathbb{P}) = \bigoplus_{q=0}^\infty C_q;
\end{equation*}
the previous relation simply indicates that every real-valued functional $F$ of ${\bf A}$ can be uniquely represented in the form
\begin{equation}\label{e:chaos2}
F = \sum_{q=0}^\infty {\rm proj}(F \, | \, C_q)=\sum_{q=0}^\infty F[q],
\end{equation}
where  $F[q]:={\rm proj}(F \, | \, C_q)$ stands for the the projection of $F$ onto $C_q$, and the series converges in $L^2(\mathbb{P})$. By definition, one has $F[0]={\rm proj}(F \, | \, C_0) = \E [F]$. See e.g. \cite[Theorem 2.2.4]{NP} for further details.

\subsection{About gradients}

Differentiating both terms in \eqref{e:realim} yields that, for $j=1,2$,
\begin{equation}\label{e:partial}
\partial_j T_n(x) = \frac{2\pi i}{\sqrt{\mathcal{N}_n} }\sum_{(\lambda_1,\lambda_2)\in \Lambda_n} \lambda_j a_\lambda e_\lambda(x),\,\, \mbox{and} \,\,\, \partial_j \hT(x) = \frac{2\pi i}{\sqrt{\mathcal{N}_n} }\sum_{(\lambda_1,\lambda_2)\in \Lambda_n} \lambda_j \widehat{a}_\lambda e_\lambda(x)
\end{equation}
(where we used the shorthand notation $\partial_j = \frac{\partial}{\partial x_j}$).
It follows that, for every $n \in S$ and every $x\in\Tb$,
\begin{equation}\label{e:six}
T_{n}(x),\, \partial_{1}T_{n}(x), \, \partial_{2}T_{n}(x), \hT(x),\, \partial_{1}\hT(x), \, \partial_{2}\hT(x) \in \bf A.
\end{equation}
Another important fact (that one can check by a direct computation) is that, for fixed $x\in \mathbb{T}$, the six random variables appearing in \eqref{e:six} are stochastically independent.

\medskip

Routine computations (see also \cite[Lemma 2.3]{RW}) yield that $${\rm Var}(\partial_j T_n(x)) ={\rm Var}(\partial_j \hT(x)) =\frac{E_n}{2},$$ for any $j=1,2$,  any $n$ and any $x\in\mathbb{T}$. Accordingly, we will denote by $\widetilde{\partial}_j$ the normalised derivative
\[
\widetilde{\partial}_j:=\sqrt{\frac{2}{E_n}}\,\frac{\partial}{\partial x_j},
\]
and adopt the following (standard) notation for the gradient and its normalised version:
$$
\nabla  :=\left( \begin{matrix} \partial_1 \\ \partial_2  \end{matrix}\right),\qquad \widetilde \nabla  :=\left( \begin{matrix} \widetilde \partial_1 \\ \widetilde \partial_2  \end{matrix}\right).
$$

\subsection{Leonov-Shiryaev formulae}\label{ss:ls}

In the proof of our variance estimates, we will crucially use the following special case of the so-called {\bf Leonov-Shiryaev combinatorial formulae} for computing joint moments. It follows immediately e.g. from \cite[Proposition 3.2.1]{PT}, by taking into account the independence of $T_n$ and $\hT$, the independence of the six random variables appearing in \eqref{e:six}, as well as the specific covariance structure of Hermite polynomials (see e.g. \cite[Proposition 2.2.1]{NP}).
{\blue
\begin{proposition}\label{p:ls} Fix $n\in S$ and write
\begin{eqnarray*}
&& (X_0(x), X_1(x), X_2(x), Y_0(x),  Y_1(x), Y_2(x))\\
&&\quad\quad\quad := (T_{n}(x),\, \widetilde{\partial}_{1}T_{n}(x), \, \widetilde{\partial}_{2}T_{n}(x), \hT(x),\, \widetilde{\partial}_{1}\hT(x), \, \widetilde{\partial}_{2}\hT(x)), \quad x\in \Tb.
\end{eqnarray*}
Consider integers $p_0,p_1,p_2, q_0, q_1, q_2\geq 0$ and $a_0,a_1,a_2, b_0, b_1, b_2\geq 0$, and write
\begin{eqnarray*}
(X^\star_1(x),...,X^{\star}_{p_0+p_1+p_2}(x))&:=& (\underbrace{X_0(x),...,X_0(x)}_{p_0\mbox{ times}} ,\underbrace{X_1(x),...,X_1(x)}_{p_1\mbox{ times}},\underbrace{ X_2(x),...,X_2(x)}_{p_2 \mbox{ times}} )\\
(X^{\star\star}_1(y),...,X^{\star\star}_{a_0+a_1+a_2}(y))&:=& (\underbrace{X_0(y),...,X_0(y)}_{a_0 \mbox{ times}} ,\underbrace{X_1(y),...,X_1(y)}_{a_1\mbox{ times}},\underbrace{ X_2(y),...,X_2(y)}_{a_2 \mbox{ times}} ) \\ 
(Y^\star_1(x),...,Y^{\star}_{q_0+q_1+q_2}(x))&:=& (\underbrace{Y_0(x),...,Y_0(x)}_{q_0\mbox{ times}} ,\underbrace{Y_1(x),...,Y_1(x)}_{q_1\mbox{ times}},\underbrace{ Y_2(x),...,Y_2(x)}_{q_2 \mbox{ times}} )\\
(Y^{\star\star}_1(y),...,Y^{\star\star}_{b_0+b_1+b_2}(y))&:=& (\underbrace{Y_0(y),...,Y_0(y)}_{b_0 \mbox{ times}} ,\underbrace{Y_1(y),...,Y_1(y)}_{b_1\mbox{ times}},\underbrace{ Y_2(y),...,Y_2(y)}_{b_2 \mbox{ times}} ).
\end{eqnarray*}

 Then, for every $x,y \in \Tb$,
\begin{eqnarray}\label{e:ls}
&&\E\left[\prod_{j=0}^2 H_{p_j}(X_j(x))H_{a_j}(X_j(y))\prod_{k=0}^2 H_{q_k}(Y_k(x))H_{b_k}(Y_k(y))\right] \\
&& \notag  = {\bf 1}_{\{ p_0+p_1+p_2 = a_0+a_1+a_2\}} {\bf 1}_{\{ q_0+q_1+q_2 = b_0+b_1+b_2\}}  \times\\
&&\quad\quad\times \sum_{\sigma, \pi} \left( \prod_{j=1}^{p_0+p_1+p_2}  \, \E[X^{\star}_j(x) X^{\star\star}_{\sigma(j)}(y)]\right)\left( \prod_{k=1}^{q_0+q_1+q_2}  \, \E[Y^{\star}_k (x) Y^{\star\star}_{\pi(k)}(y)]\right),\notag
\end{eqnarray}
where the sum runs over all permutations $\sigma, \pi$ of $\{1,..., p_0+p_1+p_2\}$ and of $\{1,...,q_0+q_1+q_2\}$, respectively.
\end{proposition}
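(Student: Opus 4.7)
The strategy rests on two structural features of the setup: \textbf{(i)} by Proposition \ref{p:realim} the fields $T_n$ and $\hT$ are independent, which decouples the $X$-part from the $Y$-part of the joint moment, and \textbf{(ii)} at any fixed point $x\in\Tb$ the triple $(T_n(x), \widetilde{\partial}_1 T_n(x), \widetilde{\partial}_2 T_n(x))$ consists of three independent standard Gaussian random variables (and similarly at $y$), a fact already noted in the paragraph following \eqref{e:six}.

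First I would invoke (i) to factorize the left-hand side of \eqref{e:ls} as a product of an $X$-expectation and a $Y$-expectation. It is then enough to prove the corresponding identity for each factor separately, since then multiplying the resulting two sums over $\sigma$ and $\pi$ gives exactly the right-hand side of \eqref{e:ls}.

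Next I would apply the diagram formula for products of Hermite polynomials of jointly Gaussian arguments, as recorded in \cite[Proposition 3.2.1]{PT}. Think of each $H_{p_j}(X_j(x))$ as labelling a ``block'' carrying $p_j$ copies of the variable $X_j(x)$, and similarly for the other five Hermite factors in the $X$-expectation. The formula then asserts that the expectation equals the sum, over all perfect matchings of the union of these blocks that never pair two elements lying in the same block, of the products of pairwise covariances. The crucial cancellation is now produced by (ii): since $X_0(\cdot),X_1(\cdot),X_2(\cdot)$ are mutually independent standard Gaussians at a common point, any edge whose two endpoints both lie at $x$ (or both at $y$) has zero covariance. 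Only bijections between the $x$-side, of total cardinality $p_0+p_1+p_2$, and the $y$-side, of total cardinality $a_0+a_1+a_2$, can therefore contribute; this forces $p_0+p_1+p_2 = a_0+a_1+a_2$ and reindexes the sum as a sum over permutations $\sigma$ of $\{1,\ldots,p_0+p_1+p_2\}$, which is exactly the $X$-factor in \eqref{e:ls}. Running the identical argument on the $Y$-expectation yields the second indicator and the sum over $\pi$.

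The only substantive point to verify is the independence assertion (ii), which follows from stationarity of $T_n$: differentiating $\E[T_n(x)T_n(y)] = r_n(x-y)$ and evaluating at $y=x$ gives $\E[T_n(x)\,\partial_j T_n(x)] = -\partial_j r_n(0) = 0$ and $\E[\partial_j T_n(x)\,\partial_k T_n(x)] = -\partial_j\partial_k r_n(0) = \tfrac{E_n}{2}\delta_{jk}$ (the second identity being computed directly from \eqref{e:cov}), after which the normalisation by $\sqrt{2/E_n}$ produces the identity covariance matrix; joint Gaussianity then upgrades uncorrelatedness to full independence. With this in hand the remainder of the argument is a direct invocation of the diagram formula and its ``non-flat'' cancellation, so I do not expect any genuine obstacle beyond careful bookkeeping of the combinatorial labels.
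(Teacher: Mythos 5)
Your proposal is correct and follows essentially the same route as the paper, which derives \eqref{e:ls} directly from the diagram formula \cite[Proposition 3.2.1]{PT} combined with the independence of $T_n$ and $\hT$, the independence of the six random variables in \eqref{e:six} at a fixed point (killing all covariances between two $x$-points or two $y$-points), and the Hermite covariance structure. Your verification of point (ii) via differentiating $r_n$ at the origin is exactly the fact the paper invokes, so there is nothing to add.
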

}

\subsection{Arithmetic facts}\label{ss:arit}

We will now present three results having an arithmetic {flavour}, that will be extensively used in the proofs of our main findings. To this end, for every $n\in S$ we introduce the $4$- and $6$-{\bf correlation set of frequencies}
\begin{eqnarray*}
S_4(n) &:=& \{{\boldsymbol{\lambda}}=(\lambda,\lambda',\lambda'',\lambda''')\in\Lambda_n^4 : \lambda- \lambda'+\lambda'' -\lambda''' =0\},\\
S_6(n) &:=& \{{\boldsymbol{\lambda}}=(\lambda,\lambda',\lambda'',\lambda''', \lambda'''', \lambda^{ v})\in\Lambda_n^6 : \lambda- \lambda'+\lambda'' -\lambda''' +\lambda^{{} iv} - \lambda^{ v}=0\}.
\end{eqnarray*}

The first statement exploited in our proofs yields an exact value for $|S_4(n)|$; the proof is based on an elegant geometric argument due to Zygmund \cite{Zy}, and is included for the sake of completeness.

\begin{lemma}\label{l:zygmund}
For every $n\in S$, every element of $S_4(n)$ has necessarily one of the following four (exclusive) structures:
\begin{itemize}
\item[\rm a)] $\lambda=\lambda'$, and $\lambda''=\lambda'''$;
\item[\rm b)] $\lambda = -\lambda'=-\lambda'' = \lambda'''$;
\item[\rm c)] $\lambda\notin \{\lambda',-\lambda'\}$, $\lambda=-\lambda''$, and $\lambda'=-\lambda'''$;
\item[\rm d)] $\lambda\notin \{\lambda',-\lambda'\}$, $\lambda=\lambda'''$, and $\lambda'=\lambda''$.
\end{itemize}
In particular, $|S_4(n)| = 3\mathcal{N}_n(\mathcal{N}_n-1)$.
\end{lemma}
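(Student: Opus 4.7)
The plan is to rewrite the defining linear relation of $S_4(n)$ as $\lambda + \lambda'' = \lambda' + \lambda'''$, and interpret this as an equality of midpoints between two chords of the circle $\mathcal{C}_n := \{z\in\R^2 : \|z\|^2 = n\}$, on which all of $\lambda,\lambda',\lambda'',\lambda'''$ lie by definition of $\Lambda_n$. The core geometric input, due to Zygmund, is that two chords of a circle that share the same midpoint must either coincide as sets of endpoints, or both pass through the centre of the circle (so that the common midpoint is the origin). This dichotomy will drive the whole case analysis.

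More precisely, setting $m := \tfrac{1}{2}(\lambda+\lambda'') = \tfrac{1}{2}(\lambda'+\lambda''')$, I would argue as follows. If $m \neq 0$, then the chord of $\mathcal{C}_n$ with midpoint $m$ is unique (it is the intersection of $\mathcal{C}_n$ with the line through $m$ perpendicular to $m$), so $\{\lambda,\lambda''\} = \{\lambda',\lambda'''\}$ as unordered pairs. This yields either $\lambda=\lambda',\ \lambda''=\lambda'''$ or $\lambda=\lambda''',\ \lambda''=\lambda'$. If instead $m = 0$, then $\lambda'' = -\lambda$ and $\lambda''' = -\lambda'$. I would then merge the sub-outcomes of both regimes into the four mutually exclusive types (a)--(d) by carefully separating whether $\lambda' \in \{\lambda,-\lambda\}$ or not: when $m\neq 0$ and $\lambda=\lambda'=\lambda''=\lambda'''$ (covered by (a)) or $\lambda=\lambda'''\notin\{\lambda',-\lambda'\}$ (covered by (d)); when $m=0$ and $\lambda' \notin \{\lambda,-\lambda\}$ (case (c)), whereas $\lambda'=\lambda$ collapses into (a) and $\lambda'=-\lambda$ produces exactly the configuration of (b).

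For the cardinality, I would count each case directly using the exclusivity:
\begin{align*}
\#(\mathrm{a}) &= \mathcal{N}_n^2, \qquad \#(\mathrm{b}) = \mathcal{N}_n, \\
\#(\mathrm{c}) &= \mathcal{N}_n(\mathcal{N}_n - 2), \qquad \#(\mathrm{d}) = \mathcal{N}_n(\mathcal{N}_n - 2),
\end{align*}
since in (c) and (d) the vector $\lambda$ is free and $\lambda'$ ranges over $\Lambda_n \setminus \{\lambda,-\lambda\}$. Summing gives $3\mathcal{N}_n^2 - 3\mathcal{N}_n = 3\mathcal{N}_n(\mathcal{N}_n-1)$, as claimed. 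The fact that $0 \notin \Lambda_n$ for $n\in S$ with $n\geq 1$ guarantees that cases (a) and (b) are genuinely disjoint (they would overlap only if $\lambda = 0$).

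The main subtlety, and the step I expect to require the most care, is not the chord argument itself but the bookkeeping that produces exactly the four advertised \emph{exclusive} types from the branches of the geometric dichotomy. In particular, I have to correctly recognise that the degenerate overlaps between $m=0$ and $m\neq 0$ scenarios (namely $\lambda'=\pm\lambda$) are precisely what reshuffles some outcomes into cases (a) and (b), and that the side conditions ``$\lambda\notin\{\lambda',-\lambda'\}$'' in (c) and (d) are exactly what prevents any double counting. Once this bookkeeping is clean, the enumeration above finishes the proof.
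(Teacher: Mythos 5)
Your proof is correct, and it arrives at the same case analysis and the same count $\mathcal{N}_n^2+\mathcal{N}_n+2\mathcal{N}_n(\mathcal{N}_n-2)=3\mathcal{N}_n(\mathcal{N}_n-1)$ as the paper, but it is organized around a different geometric pivot. The paper splits first according to whether $\lambda=\lambda'$, $\lambda=-\lambda'$, or $\lambda\notin\{\lambda',-\lambda'\}$, and in the latter two cases notes that $-\lambda''$ and $\lambda'''$ must lie in the intersection of the circle of radius $\sqrt{n}$ centred at the origin with the congruent circle centred at $2\lambda$ (respectively $\lambda-\lambda'$), an intersection consisting of the single tangency point $\lambda$ (respectively of the two points $\lambda$ and $-\lambda'$). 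You instead split on the common midpoint $m=\tfrac12(\lambda+\lambda'')=\tfrac12(\lambda'+\lambda''')$: for $m\neq 0$ a chord is determined by its midpoint (both endpoints lie on the line through $m$ orthogonal to $m$, the degenerate tangent case $\lambda=\lambda''=m$ included), forcing $\{\lambda,\lambda''\}=\{\lambda',\lambda'''\}$, while $m=0$ forces $\lambda''=-\lambda$, $\lambda'''=-\lambda'$. The two arguments are essentially equivalent pieces of elementary circle geometry (your chord lemma is the reflected form of the paper's two-circle intersection), so neither buys extra generality; yours isolates the symmetry $\lambda+\lambda''=\lambda'+\lambda'''$ a bit more transparently, whereas the paper's split makes the final count immediate because its classes $S_1,S_2,S_3$ are exactly (a), (b), and (c)$\,\cup\,$(d). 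One phrasing in your bookkeeping is slightly off: in the $m\neq 0$ regime the outcome $\lambda=\lambda'$, $\lambda''=\lambda'''$ is case (a) in full generality, not only when all four frequencies coincide; the configuration $\lambda=\lambda'=\lambda''=\lambda'''$ is merely where the second outcome $\lambda=\lambda'''$, $\lambda'=\lambda''$ overlaps with (a), and the sub-case $\lambda=-\lambda'$ of that second outcome simply cannot occur when $m\neq0$ (it would give $m=0$, and the corresponding tuple is anyway absorbed into (b)). This is cosmetic and affects neither the exhaustiveness and mutual exclusivity of (a)--(d) nor the count, which, as you note, also uses that $0\notin\Lambda_n$ so that $\lambda\neq-\lambda$ and each of (c), (d) contributes $\mathcal{N}_n(\mathcal{N}_n-2)$.
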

\noindent\begin{proof}
We {{} partition} the elements of $S_4(n)$ into three disjoint subset: $S_1 = \{\boldsymbol{\lambda}:\lambda=\lambda'\}\, {\cap \, S_4(n)}$, $S_2 = \{\boldsymbol{\lambda}:\lambda=-\lambda'\} \, {\cap \, S_4(n)}$, and $S_3 = \{\boldsymbol{\lambda}:\lambda\notin \{\lambda',-\lambda'\} \} \, {\cap \, S_4(n)}$. If ${\boldsymbol{\lambda}} \in S_1$, then necessarily $\lambda''=\lambda'''$, and consequently $|S_1| = \mathcal{N}_n^2$. If ${\boldsymbol{\lambda}}\in S_2$, then the relations $2\lambda+\lambda''=\lambda'''$ and $2\lambda-\lambda'''=-\lambda''$ show that both $-\lambda''$ and $\lambda'''$ must belong to the intersection of the circle $C_0$ of radius $\sqrt{n}$ centered at the origin (since they are both in $\Lambda_n$) with the circle $C'$ of radius $\sqrt{n}$ centered in $2\lambda$; since $C_0\cap C' = \{\lambda\}$, we conclude that necessarily $\lambda=-\lambda'=-\lambda''=\lambda'''$. Plainly, this argument also yields $|S_2| = \mathcal{N}_n$. If ${\boldsymbol{\lambda}} \in S_3$, then the relations $\lambda-\lambda'+\lambda'' = \lambda'''$ and $\lambda-\lambda'-\lambda''' = -\lambda''$ show that both $-\lambda''$ and $\lambda'''$ must belong to the intersection of the circle $C_0$ of radius $\sqrt{n}$ centered at the origin (again, since they are both in $\Lambda_n$) with the circle $C''$ of radius $\sqrt{n}$ centered in $\lambda-\lambda'$; since $C_0\cap C'' = \{\lambda, -\lambda'\}$, we conclude that necessarily either $-\lambda''=\lambda$ and $-\lambda'''=\lambda'$ or $\lambda'''=\lambda$ and $\lambda''=\lambda'$ (the configurations such that $-\lambda''=\lambda'''$ are excluded by the requirement that $\lambda\neq -\lambda'$). This yields $|S_3| = 2\mathcal{N}_n(\mathcal{N}_n-2)$. Summing up,
$$
|S_4(n)| = |S_1|+|S_2|+|S_3| = \mathcal{N}_n^2+ \mathcal{N}_n+2\mathcal{N}_n (\mathcal{N}_n-2) = 3\mathcal{N}_n(\mathcal{N}_n-1).
$$
\end{proof}

The second estimate involves $6$-correlations, and follows from a deep result by Bombieri and Bourgain \cite[Theorem 1]{B-B} --- see also \cite[Theorem 2.2]{KKW} for a slightly weaker statement.

\begin{lemma}[\bf See \cite{B-B}]\label{lemma BB}
As $\mathcal{N}_n\to \infty$,
$$
|S_6(n)| = O\left(\mathcal N_n^{7/2}  \right).
$$
\end{lemma}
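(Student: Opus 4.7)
The plan is to reduce the estimate to a direct citation of the Bombieri--Bourgain result, after an algebraic reformulation that makes the correspondence transparent. I would begin by rewriting the defining equation $\lambda - \lambda' + \lambda'' - \lambda''' + \lambda^{iv} - \lambda^v = 0$ in the symmetric form $\lambda + \lambda'' + \lambda^{iv} = \lambda' + \lambda''' + \lambda^v$. Introducing the representation function
\[
R_3(v) := \#\{(\mu_1,\mu_2,\mu_3)\in \Lambda_n^3 : \mu_1+\mu_2+\mu_3 = v\}, \qquad v\in \Z^2,
\]
one then has the clean identity $|S_6(n)| = \sum_{v\in\Z^2} R_3(v)^2$, which exhibits the quantity of interest as the additive energy (of order $3$) of the set $\Lambda_n$.

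Second, I would invoke Bombieri--Bourgain's theorem, which gives precisely an $O(\mathcal{N}_n^{7/2})$ bound on such an energy-type sum. It is useful to note that the trivial estimates are far from sufficient: since $\sum_v R_3(v) = \mathcal{N}_n^3$ and $R_3(v) \leq \mathcal{N}_n^2$, a naive bound is $\sum_v R_3(v)^2 \leq \mathcal{N}_n^5$; and even Plancherel applied to $\widehat{\mathbf{1}_{\Lambda_n}}$ only yields the additive quadruples bound with a non-trivial saving. The deep input of Bombieri--Bourgain is the structural fact that lattice points on a circle of radius $\sqrt{n}$ cannot concentrate along additive patterns: this is proved via a combination of harmonic analysis on $\T^2$, sophisticated incidence geometry, and arithmetic input (Jarn\'ik-type bounds on the distribution of lattice points on short arcs).

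The main obstacle, from my perspective, is not a proof to be produced from scratch but rather the bookkeeping verification that the statement cited from \cite{B-B} applies verbatim to $|S_6(n)|$ as we have defined it. In particular, one must check that the $6$-correlations counted in the Bombieri--Bourgain paper agree up to permutation and sign conventions with our definition, and that the $O$-constant is indeed uniform in $n$ (rather than hiding some arithmetic dependence, such as on $\mathcal{N}_n$ itself entering logarithmically). Once this translation is carried out, the lemma follows immediately. Qualitatively, the estimate should be contrasted with Lemma 2.5, which gives the exact order $|S_4(n)| = 3\mathcal{N}_n(\mathcal{N}_n - 1) \asymp \mathcal{N}_n^2$: the exponent $7/2$ for $S_6(n)$ beats the heuristic bound $\mathcal{N}_n^4$ by a factor of $\mathcal{N}_n^{1/2}$, and this saving (together with the closed-form evaluation of $|S_4(n)|$) is precisely what powers the variance analysis in the rest of the paper.
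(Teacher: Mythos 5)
Your proposal is correct and follows essentially the same route as the paper: the lemma is not proved from scratch there either, but is obtained by directly citing Theorem 1 of Bombieri--Bourgain, which bounds exactly the quantity $\sum_{v}R_3(v)^2=|S_6(n)|$ by $O(\mathcal{N}_n^{7/2})$. Your reformulation as a third-order additive energy and the bookkeeping remarks on conventions are just the translation step implicit in that citation.
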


We will also need the following elementary fact about the behaviour of the correlation function $r_n$, as defined in \eqref{e:cov}, in a small square containing the origin.

\begin{proposition}\label{gio} Let $n\in S$, with $n\geq 1$, let $c_n= ( 1000 \sqrt {n} )^{-1} $, and $Q_n  := \{(x,y)\in \R^2 : |x|, |y| \leq c_n\}$. Assume that $z = (x,y) \in Q_n$ is such that $r_n(z) = \pm 1$; then, $z=0$.
\end{proposition}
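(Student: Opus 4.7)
The plan is entirely elementary and rests on the fact that $r_n$ is a normalised sum of cosines of linear functions of $z$. I would first observe that each term $\cos(2\pi\langle\lambda,z\rangle)$ lies in $[-1,1]$; hence $r_n(z)=+1$ (resp.\ $r_n(z)=-1$) forces $\cos(2\pi\langle\lambda,z\rangle)=1$ (resp.\ $=-1$) for every $\lambda\in\Lambda_n$, equivalently
\[
\langle\lambda,z\rangle\in\Z \quad (\text{resp.}\ \in\Z+\tfrac12) \qquad \text{for every } \lambda\in\Lambda_n.
\]

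The next step is a size bound. Since every $\lambda=(\lambda_1,\lambda_2)\in\Lambda_n$ satisfies $|\lambda_1|,|\lambda_2|\leq\sqrt{n}$, the triangle inequality gives, for $z=(x,y)\in Q_n$,
\[
|\langle\lambda,z\rangle|\;\leq\;|\lambda_1|\,|x|+|\lambda_2|\,|y|\;\leq\;2\sqrt{n}\,c_n\;=\;\tfrac{1}{500}\;<\;\tfrac12.
\]
This immediately rules out the case $r_n(z)=-1$, since the closest half-integer to $0$ sits at distance $1/2$; and in the case $r_n(z)=+1$ it forces the only possible integer value, namely $\langle\lambda,z\rangle=0$, for every $\lambda\in\Lambda_n$.

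To close the argument I would exhibit two linearly independent elements of $\Lambda_n$: because $n\geq 1$ and $n\in S$, there exists a nonzero $\lambda=(\lambda_1,\lambda_2)\in\Lambda_n$, and its $90^\circ$-rotate $\lambda^\perp:=(-\lambda_2,\lambda_1)$ has integer entries with $\|\lambda^\perp\|^2=\lambda_1^2+\lambda_2^2=n$, so $\lambda^\perp\in\Lambda_n$ as well. Since $\{\lambda,\lambda^\perp\}$ is a basis of $\R^2$, the linear system $\langle\lambda,z\rangle=\langle\lambda^\perp,z\rangle=0$ admits only the solution $z=0$, as required.

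I don't foresee any real obstacle here: the statement is a simple arithmetic/geometric separation result, and the only thing one needs is that the constant $1000$ built into $c_n$ is comfortably large enough to squeeze $\langle\lambda,z\rangle$ strictly inside $(-1/2,1/2)$ uniformly in $\lambda\in\Lambda_n$.
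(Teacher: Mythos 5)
Your proof is correct and follows essentially the same route as the paper: both reduce $r_n(z)=\pm 1$ to the condition that $\langle\lambda,z\rangle$ be an integer (resp.\ half-integer) for every $\lambda\in\Lambda_n$, and then exploit the smallness of $Q_n$ together with the symmetry of $\Lambda_n$ to force $z=0$. Your version is a slight streamlining: the uniform bound $2\sqrt{n}\,c_n<\tfrac12$ makes all inner products vanish outright and the rotated vector $\lambda^{\perp}=(-\lambda_2,\lambda_1)\in\Lambda_n$ finishes the argument, whereas the paper reaches the same conclusion by solving a small linear system built from the reflected points $(-\lambda_1,\lambda_2)$ and $(\lambda_2,\lambda_1)$, with a separate case when $\lambda_1$ or $\lambda_2$ vanishes.
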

\noindent\begin{proof} Assume first that $r_n(z) = 1$. Then, for every $(\lambda_1,\lambda_2) \in \Lambda_n$, one has necessarily that there exist $j,k,l\in \mathbb{Z}$ such that (i) $\lambda_1 x+\lambda_2 y = j$, (ii) $-\lambda_1 x+\lambda_2 y = k$, and (iii) $\lambda_1 y+\lambda_2 x = l$. Assume first that $\lambda_1 = 0$ (and therefore $|\lambda_2| = \sqrt{n} $): equation (i) implies that $|y| = |j|/\sqrt{n}$, and such an expression is $>c_n$ unless $j=y=0$; similarly, equation (iii) implies that $|x|>c_n$, unless $x=l=0$. The case where $\lambda_2 = 0$ is dealt with analogously. Now assume that $\lambda_1, \lambda_2\neq 0$: equations (i) and (ii) imply therefore that $y= (j+k)/2\lambda_2$ and $x = (j-k)/2\lambda_1$; since $|\lambda_i| \leq \sqrt{n}$, for $i=1,2$,  we infer that $|x|, |y|\leq c_n$ if and only if $j+k = 0= j-k$, and therefore $x=y=j=k=0$.
If $r_n(z) = -1$, then, for every $(\lambda_1,\lambda_2) \in \Lambda_n$, one has necessarily that there exist $j,k,l\in \mathbb{Z}+\frac12$ such that equations (i), (ii), (iii) above are verified: reasoning exactly as in the first part of the proof, we conclude that $\max\{|x|, |y|\}>c_n$, and consequently $z$ cannot be an element of $Q_n$.

\end{proof}

{{}

Finally, we will make use of the following result, corresponding to a special case of \cite[Corollary 9, p. 80]{Kov}: it yields a local ersatz of B\'ezout theorem for systems of equations involving trigonometric polynomials. We recall that, given a smooth mapping $U : \R^2 \to \R^2$ and a point $x\in \R^2$ such that $U(x) = (0,0)$, one says that $x$ is {\it nondegenerate} if the Jacobian matrix of $U$ at $x$ is invertible.

\begin{lemma}[See \cite{Kov}]\label{l:few} Fix $n\in S$, and consider two trigonometric polynomials on $\R^2$:
$$
P(x) = c+ \sum_{\lambda\in \Lambda_n} a_\lambda e_\lambda(x), \quad \mbox{and} \quad Q(x) = c'+ \sum_{\lambda\in \Lambda_n} b_\lambda e_\lambda(x),
$$
where $c, c'\in \R$ and the complex numbers $\{a_\lambda, b_\lambda\}$ verify the following:
\begin{itemize}
\item[--] for every $\lambda \in \Lambda_n$, one has that $a_\lambda = \overline{a_{-\lambda}}$ and $b_\lambda = \overline{b_{-\lambda}}$;

\item[--] every solution of the system $(P(x), Q(x)) = (0,0)$ such that $\|x\| < \pi/\sqrt{n}$ is nondegenerate.
\end{itemize}
Then, the number of solutions of the system $(P(x), Q(x)) = (0,0)$ contained in the open window $W := \{x\in \R^2 : \|x\|< \pi/\sqrt{n}\}$ is bounded by a universal constant $\alpha(n) \in (0,\infty)$ depending uniquely on $\mathcal{N}_n = | \Lambda_n|$.

\end{lemma}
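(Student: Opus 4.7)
The plan is to recognize the statement as essentially a direct application of the fewnomial machinery encoded in \cite[Corollary 9, p.80]{Kov}; the bulk of the work consists in casting $\{P=0, Q=0\}$ into the template treated by that corollary, and in verifying that, within the shrinking window $W$, the relevant trigonometric arguments stay in a compact set uniformly in $n$, so that the bound produced by Khovanskii's result depends on $\mathcal{N}_n$ only.

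First I would exploit the conjugacy $a_\lambda = \overline{a_{-\lambda}}$ (and the analogous one for $b_\lambda$) to rewrite $P$ and $Q$ as genuine real trigonometric polynomials. Choosing once and for all a subset $\Lambda_n^+\subset \Lambda_n$ that represents $\Lambda_n$ modulo the involution $\lambda\mapsto -\lambda$, the identity $a_\lambda e_\lambda(x)+a_{-\lambda}e_{-\lambda}(x) = 2\mathrm{Re}(a_\lambda e_\lambda(x))$ gives
\[
P(x) = c + 2\sum_{\lambda\in \Lambda_n^+} \bigl(\mathrm{Re}(a_\lambda)\cos(2\pi\langle \lambda,x\rangle) - \mathrm{Im}(a_\lambda)\sin(2\pi\langle \lambda,x\rangle)\bigr),
\]
and analogously for $Q$. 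Thus $P$ and $Q$ are $\R$-linear combinations (with constant terms) of the $2\mathcal{N}_n$ elementary functions $\cos(2\pi\langle \lambda,\cdot\rangle), \sin(2\pi\langle \lambda,\cdot\rangle)$ with $\lambda\in \Lambda_n^+$.

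Next I would verify that $W=\{x\in\R^2 : \|x\| < \pi/\sqrt{n}\}$ is exactly the domain on which the Khovanskii-type bound produces a constant depending solely on $\mathcal{N}_n$. For every $x\in W$ and every $\lambda\in \Lambda_n$, Cauchy--Schwarz gives $|2\pi\langle \lambda,x\rangle| < 2\pi \|\lambda\|\cdot\|x\| < 2\pi\sqrt{n}\cdot \pi/\sqrt{n} = 2\pi^2$, so the arguments of all the cosines and sines stay in the fixed interval $(-2\pi^2, 2\pi^2)$, \emph{uniformly in $n$}. The system $\{P=0, Q=0\}$ is therefore a system of two real-analytic equations in two real variables, built from $\mathcal{N}_n$ linear forms composed with the universal Pfaffian pair $(\cos, \sin)$ in a compact range, and both equations are of degree one in each trigonometric factor. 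This matches precisely the format of \cite[Corollary 9, p.80]{Kov}.

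Finally, invoking that corollary yields a bound on the number of nondegenerate isolated real solutions of $\{P=0,Q=0\}$ inside $W$ that depends only on the number $\mathcal{N}_n$ of linear phases and on the (universal) degrees; the nondegeneracy hypothesis on each zero in $W$ is imposed in the statement, so the bound applies directly and defines the required $\alpha(n)\in(0,\infty)$. The main subtlety is purely notational: translating the abstract setup of \cite{Kov} (general systems of equations involving fewnomials/Pfaffian chains) into the concrete trigonometric system at hand, and checking that no growth of the arguments spoils the uniformity of Khovanskii's counting estimate---both points being handled by the uniform bound $|2\pi\langle \lambda,x\rangle| < 2\pi^2$ derived above.
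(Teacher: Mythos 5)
Your proposal is correct and follows essentially the same route as the paper: the paper gives no separate proof of this lemma, presenting it directly as a special case of \cite[Corollary 9, p.~80]{Kov}, which is exactly the reduction you carry out. Your added details (rewriting $P,Q$ as real trigonometric polynomials via $a_\lambda=\overline{a_{-\lambda}}$, and checking that $|2\pi\langle\lambda,x\rangle|<2\pi^2$ uniformly in $n$ on $W$, so that Khovanskii's bound depends only on $\mathcal{N}_n$) are precisely the verifications implicit in the paper's citation.
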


}
\medskip

The next section contains a precise description of the strategy we will adopt in order to attack the proof of Theorem \ref{t:main}

\section{Structure of the proof of Theorem \ref{t:main}}\label{s:approach}


\subsection{{Chaotic projections and cancellation phenomena}}

We will start by showing in Lemma \ref{lemma approx} that $I_n$ can be formally obtained in $L^2(\P)$ as
\begin{equation}\label{formula integrale1}
I_n = \int_{\mathbb T} \delta_{\bf 0}({\bf T}_n(x)) \left | J_{{\bf T}_n}(x)\right |\,dx,
\end{equation}
where $\delta_{\bf 0}$ denotes the Dirac mass in ${\bf 0}=(0,0)$, $J_{{\bf T}_n}$ is the Jacobian matrix
$$
J_{{\bf T}_n}=\left( \begin{matrix}
&\partial_1 T_n &\partial_2 T_n\\
&\partial_1 \widehat T_n &\partial_2 \widehat T_n
\end{matrix}\  \right )
$$
and $| J_{{\bf T}_n}|$ is shorthand for the absolute value of its determinant. Since $I_n$ is a square-integrable functional of a Gaussian field, according to the general decomposition \eqref{e:chaos2} one has that
\begin{equation}\label{series}
I_n = \sum_{q\ge 0} I_n[q],
\end{equation}
where $I_n[q]=\text{proj}(I_n|C_q)$ denotes the orthogonal projection of $I_n$ onto the $q$-th Wiener chaos $C_q$. Since $I_n[0]=\E[I_n]$, the computation of the $0$-order chaos projection will allow us to conclude the proof of Part 1 of Theorem \ref{t:main} in Section \ref{proof media sec}.

One crucial point in our analysis is that, as proved in Lemma \ref{berry's cancellation}, the projections of $I_n$ onto odd-order Wiener chaoses vanish and, more subtly, also the second chaotic component disappears. {Namely}, we will show that, for every $n\in S$, it holds
$$
I_n[q]=0 \qquad \text{for odd } q\ge 1
$$
and moreover
\begin{equation}\label{eq chaos 2}
I_n[2]=0.
\end{equation}
Our proof of \paref{eq chaos 2} is based on Green's identity and the properties of Laplacian eigenfunctions (see also \cite[Section 7.3 and p.134]{Ro}).

\subsection{Leading term: fourth chaotic projections}

The first non-trivial chaotic projection of $I_n$ to investigate is therefore $I_n[4]$. One of the main achievements of our paper is an explicit computation of its asymptotic variance, as well as a proof that it gives the dominant term in the asymptotic behaviour of the total variance $\Var(I_n)=\sum_{q\ge 2} \Var(I_n[2q])$. The forthcoming Propositions \ref{varianze chaos}, \ref{asymptotic} and \ref{limite 4}, that we will prove in Section \ref{sec chaos 4}, are the key steps in order to achieve our goals.
\begin{proposition}\label{varianze chaos}
Let $\lbrace n_j\rbrace_j\subset S$ be such that $\mathcal N_{n_j}\to +\infty$ and $|\widehat{\mu}_{n_j}(4)|\to \eta$. Then
\begin{equation*}
\Var(I_{n_j}[4]) = d(\eta)\,  \frac{E_{n_j}^2}{\mathcal N_{n_j}^2}(1+o(1)),
\end{equation*}
where
\begin{equation*}
d(\eta) = \frac{3\eta^2 + 5}{128\pi^2}.
\end{equation*}
\end{proposition}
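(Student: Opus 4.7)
The plan is to identify $I_n[4]$ with an explicit integral of degree-$4$ Hermite polynomials in the components of $\mathbf{T}_n$ and its normalized first derivatives, and then compute its variance via the Leonov–Shiryaev formula (Proposition \ref{p:ls}) together with an arithmetic analysis of the resulting $\Tb$-integrals. Starting from the Kac–Rice formula \paref{formula integrale1}, the integrand at each $x$ is $\delta_0(T_n(x))\delta_0(\hT(x))\cdot (E_n/2)\,|V_1W_2 - V_2W_1|$, where $V_i := \widetilde\partial_i T_n$ and $W_i := \widetilde\partial_i\hT$. Since the six random variables $T_n(x),\hT(x),V_1(x),V_2(x),W_1(x),W_2(x)$ are mutually independent standard Gaussians, I would expand each factor separately: $\delta_0(U) = \phi(0)\sum_{k\ge 0}H_{2k}(0)H_{2k}(U)/(2k)!$, and the Hermite expansion of $|V_1W_2-V_2W_1|$ is obtained in closed form by conditioning on $(V_1,V_2)$, using $|V_1W_2-V_2W_1| = \sqrt{V_1^2+V_2^2}\,|Z|$ with $Z\sim N(0,1)$ independent of $(V_1,V_2)$, or equivalently by passing to the rotated basis $(\tilde W_1,\tilde W_2) = (V_1W_1+V_2W_2, V_1W_2 - V_2W_1)/\sqrt{V_1^2+V_2^2}$. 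Extracting the total-degree-$4$ part yields an expression of the form
\[
I_n[4] \;=\; \frac{E_n}{2}\int_\Tb \sum_{\boldsymbol\alpha:\,|\boldsymbol\alpha|=4} c_{\boldsymbol\alpha}\,H_{\boldsymbol\alpha}\bigl(T_n(x),\hT(x),\widetilde\nabla T_n(x),\widetilde\nabla \hT(x)\bigr)\,dx,
\]
with explicit numerical constants $c_{\boldsymbol\alpha}$.

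Next I would compute $\Var(I_n[4]) = \E[I_n[4]^2]$ by squaring, exchanging expectation with the double integral, and applying Proposition \ref{p:ls}: each $\E[H_{\boldsymbol\alpha}(\cdot,x)H_{\boldsymbol\alpha'}(\cdot,y)]$ reduces to a finite sum of products of covariances between components of $\mathbf{T}_n$ at $x$ and $y$. By Proposition \ref{p:realim} only intra-$T_n$ and intra-$\hT$ pairings survive, and the joint independence of the six coordinates at a common point kills all derivative-$\times$-undifferentiated cross-terms at $z=0$; what remains involves only $r_n(z),\widetilde\partial_i r_n(z),\widetilde\partial_i\widetilde\partial_j r_n(z)$ with $z = x-y$. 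Stationarity then collapses the double integral to $\int_\Tb K_n(z)\,dz$, with $K_n$ a polynomial of total weight $4$ in $r_n$ and its partials. Each term has the shape $\int_\Tb\prod_{\ell=1}^{4}(\partial^{a_\ell}r_n)(z)\,dz$ and, via Parseval, is equal to a weighted sum
\[
\mathcal{N}_n^{-4}\sum_{\boldsymbol\lambda\in S_4(n)} Q_{\mathbf a}(\boldsymbol\lambda),
\]
with $Q_{\mathbf a}$ a monomial in the normalized coordinates $\lambda_i/\sqrt{n}$. Lemma \ref{l:zygmund} partitions $S_4(n)$ into the four configurations (a)–(d), and the sum of $Q_{\mathbf a}$ over each piece is expressible as a moment of $\mu_n$ on $S^1$. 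Using the invariance $z\mapsto iz$ of $\mu_n$ (which enforces $\widehat\mu_n(2)=0$), one has the closed-form identities $\int_{S^1}z_1^4\,\mu_n(dz) = (3+\widehat\mu_n(4))/8$ and $\int_{S^1}z_1^2 z_2^2\,\mu_n(dz) = (1-\widehat\mu_n(4))/8$, together with analogues for other monomials. The ``diagonal'' configurations (a) and (b) generate the $\widehat\mu_n(4)$-independent contribution $5/(128\pi^2)$, while the ``off-diagonal'' configurations (c) and (d) supply the $\widehat\mu_n(4)^2$-proportional contribution $3\widehat\mu_n(4)^2/(128\pi^2)$. The two $\widetilde\partial$'s in $|J_{\mathbf{T}_n}|$ each bring a factor $\sqrt{E_n/2}$, so squaring produces the prefactor $E_n^2$, and collecting everything gives $\Var(I_n[4]) = d_n\, E_n^2/\mathcal{N}_n^2\,(1+o(1))$ with $d_n$ as in \paref{e:dn}; the convergence $d_n \to d(\eta)$ is then immediate under the hypothesis $|\widehat\mu_{n_j}(4)|\to\eta$.

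The most delicate step is the first one, namely the determination of the exact coefficients $c_{\boldsymbol\alpha}$. Degree-$4$ contributions split into three regimes according to the Hermite degrees $(k_1,k_2)$ absorbed by the two Dirac factors: $(0,0)$, which pairs with the full degree-$4$ projection of $|V_1W_2-V_2W_1|$; $(1,0)$ and $(0,1)$, which pair with its degree-$2$ projection; and $(2,0), (0,2)$ and $(1,1)$, which pair with its degree-$0$ part. Each mixed term must be computed individually (via the conditional/rotated representation above together with polar coordinates in $(V_1,V_2)$), and the cancellations between them must reproduce the precise constants $3$, $5$ and $128\pi^2$. Each individual sub-computation is elementary---conditional expectations and Gaussian integrals of dimension at most four---but the bookkeeping is extensive: since $\Var(I_n[4]) = O(E_n^2/\mathcal{N}_n^2)$ is strictly subleading compared to the ``naive'' order $E_n^2/\mathcal{N}_n$ that a single untamed chaos component would contribute, any numerical slip at an intermediate stage would spoil the final coefficient $d(\eta)$.
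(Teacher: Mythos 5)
Your route is genuinely different from the paper's. You propose to compute $\Var(I_{n}[4])=\E[I_n[4]^2]$ directly: expand the Kac--Rice integrand into Hermite polynomials, square, apply the Leonov--Shiryaev formula (Proposition \ref{p:ls}), reduce by stationarity to $\int_{\T}K_n(z)\,dz$ with $K_n$ a degree-four polynomial in $r_n$ and its normalized derivatives, and evaluate each term via Parseval as a sum over $S_4(n)$, classified by Lemma \ref{l:zygmund}. The paper instead never squares the chaotic expansion: after obtaining the fourth-chaos coefficients (Lemmas \ref{berry's cancellation} and \ref{alpha piccoli}), it rewrites $I_{n}[4]$, up to $o_\P(1)$ after normalization, as an explicit quadratic polynomial in a $14$-dimensional vector of second-chaos variables $W,\dots,M_{1,2}$ (Lemmas \ref{lemma infinito} and \ref{carino}), proves a multivariate CLT for that vector via the Fourth Moment Theorem (Lemma \ref{CLT}), and deduces the asymptotic variance as the variance of the limiting Gaussian quadratic form, using hypercontractivity/uniform integrability within a fixed chaos to upgrade convergence in law to convergence of variances. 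Your approach, if carried through, would in fact give an exact finite-$n$ formula for $\Var(I_n[4])$ (the $o(1)$ coming only from the $\mathcal{N}_n^{-3}$ configurations) and avoids any limit theorem; the paper's approach has the advantage that the same algebraic reduction and CLT are reused verbatim to prove the non-central limit theorem of Proposition \ref{limite 4}, which a pure variance computation does not yield.

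There is, however, a genuine gap: the quantitative heart of the statement --- the coefficient $d(\eta)=(3\eta^2+5)/(128\pi^2)$ --- is asserted, not derived. You never determine the degree-four coefficients $c_{\boldsymbol\alpha}$ (equivalently the constants $\alpha_{a,b,c,d}$ of Lemma \ref{alpha piccoli} and the $\beta$'s), nor do you evaluate a single configuration sum, and your structural claim about where the constants come from is not correct as stated. Configuration (b) of Lemma \ref{l:zygmund} contains only $\mathcal{N}_n$ quadruples, hence contributes at order $E_n^2\mathcal{N}_n^{-3}$ and cannot be part of the leading constant $5/(128\pi^2)$. More importantly, configuration (a) factorizes into products of two single sums over $\Lambda_n$, and these single sums include fourth-moment quantities such as $\frac{1}{n^2\mathcal{N}_n}\sum_\lambda\lambda_1^4=\tfrac18(3+\widehat\mu_n(4))$ and $\frac{1}{n^2\mathcal{N}_n}\sum_\lambda\lambda_1^2\lambda_2^2=\tfrac18(1-\widehat\mu_n(4))$; squares and cross-products of these generate both linear and quadratic terms in $\widehat\mu_n(4)$ already inside configuration (a). So the clean split ``(a),(b) give the $\eta$-free part, (c),(d) give the $\eta^2$ part'' is false, and the disappearance of the terms linear in $\widehat\mu_n(4)$ (which must happen, since the limit depends only on $|\widehat\mu_{n_j}(4)|\to\eta$) is a global cancellation that only emerges once all the bookkeeping is actually done --- in the paper it is checked by verifying that the limiting variance is the same along subsequences with $\widehat\mu(4)\to\eta$ and $\to-\eta$. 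Until those computations are performed, your argument establishes only that $\Var(I_{n_j}[4])$ is of the form $(c_1+c_2\widehat\mu_{n_j}(4)+c_3\widehat\mu_{n_j}(4)^2)E_{n_j}^2\mathcal{N}_{n_j}^{-2}(1+o(1))$ for some constants, not the stated value of $d(\eta)$.
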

In view of Remark \ref{r:postmain}(2), Proposition \ref{varianze chaos} coincides with Part 2 of Theorem \ref{t:main}, once we replace $I_{n_j}[4]$ with $I_{n_j}$.
Let us now set, for $n\in S$,
\begin{eqnarray}\label{momento 4}
R_n(4) &:=& \int_{\mathbb T} r_n(x)^4\,dx= \frac{|S_n(4)|}{\mathcal {N}_n^4} =\frac{3\mathcal{N}_n(\mathcal{N}_n-1)}{\mathcal{N}_n^4} , \\
R_n(6)& :=& \int_{\mathbb T} r_n(x)^6\,dx= \frac{|S_n(6)|}{\mathcal N_n^6},\label{momento 6}
\end{eqnarray}
where $S_n(4), S_n(6)$ are the sets of $4$- and $6$-correlation coefficients defined in Section \ref{ss:arit}, and we have used Lemma \ref{l:zygmund} in \paref{momento 4}. The following result (Proposition \ref{asymptotic}), combined with Proposition \ref{varianze chaos} and Lemma \ref{lemma BB} allows us to conclude that, as $\mathcal{N}_n\to \infty$,
\begin{equation}\label{ciao}
\Var(I_{n}) \sim \Var(I_{n}[4]),
\end{equation}
thus achieving the proof of Part 2 of Theorem \ref{t:main}. Note that, by virtue of Lemma \ref{lemma BB} and \eqref{momento 6}, as $\mathcal{N}_n\to \infty$ one has that
$$
R_n(6) = o\left(\frac{1}{\mathcal{N}_n^2}\right) \quad \mbox{or, equivalently,} \quad R_n(6) = o\left(R_n(4)\right).
$$

\begin{proposition}\label{asymptotic}
As $\mathcal N_n\to +\infty$, we have
$$
\sum_{q\ge 3} \Var\left( I_n[2q]  \right) = O\left(E_n^2 R_n(6)  \right).
$$
\end{proposition}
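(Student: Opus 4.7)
The plan is to express, for each $q\geq 3$, the variance $\Var(I_n[2q])$ as an integral over $\mathbb T$ of a polynomial of degree $2q$ in the normalized covariance functions of $\mathbf T_n$, and then bound this integral by a $q$-dependent constant times $R_n(6)$, with the constants summable in $q$.

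Starting from the Kac-Rice representation \eqref{formula integrale1} of $I_n$ (made rigorous via Lemma \ref{lemma approx}), we expand the integrand $\delta_{\bf 0}(\mathbf T_n(x))\,|J_{\mathbf T_n}(x)|$ in Hermite polynomials of the six independent unit-variance Gaussian variables $T_n(x)$, $\widetilde{\partial}_1 T_n(x)$, $\widetilde{\partial}_2 T_n(x)$, $\widehat{T}_n(x)$, $\widetilde{\partial}_1 \widehat{T}_n(x)$, $\widetilde{\partial}_2 \widehat{T}_n(x)$, after pulling out the overall prefactor $E_n/2$ coming from the normalization of the derivatives inside $|J_{\mathbf T_n}|$. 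Projecting onto $C_{2q}$ produces
$$I_n[2q] \;=\; \frac{E_n}{2}\int_{\mathbb T} \sum_{|\alpha|=2q} \beta_\alpha\, H_\alpha(x)\, dx,$$
where $\alpha$ ranges over multi-indices of total length $2q$, $H_\alpha(x)$ denotes the associated product of six Hermite polynomials evaluated at the Gaussian entries above, and $\beta_\alpha$ are the coefficients coming from the joint Hermite expansion of $\delta_0\otimes \delta_0\cdot|\widetilde{J}|$. By orthogonality of Wiener chaoses, translation invariance, and Proposition \ref{p:ls}, one obtains
$$\Var(I_n[2q]) \;=\; \frac{E_n^2}{4}\int_{\mathbb T} F_{2q}(z)\,dz,$$
where $F_{2q}(z)$ is a finite sum of products of exactly $2q$ normalized covariance factors, each of which is one of $r_n(z)$, $\pm\widetilde{\partial}_j r_n(z)$, or $\pm\widetilde{\partial}_{jk}^2 r_n(z)$, and is pointwise bounded by $1$ in absolute value as a correlation of unit-variance Gaussians.

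The key analytic estimate is then the bound
$$\int_{\mathbb T} \rho(z)^{2q}\,dz \;\leq\; C\, R_n(6) \qquad (q\geq 3),$$
valid for each of the three types of normalized covariances $\rho$ above. For $\rho=r_n$ this is immediate from $|r_n|\leq 1$, whence $\int r_n^{2q}\leq \int r_n^6 = R_n(6)$; for $\rho$ a first- or second-order normalized derivative, one expands $\rho$ as a Fourier series over $\Lambda_n$ whose $\lambda$-weights lie in $[-1,1]$ (after dividing by appropriate powers of $\sqrt{E_n}$), integrates over $\mathbb T$ to select a $2q$-correlation set, and bounds the latter via the elementary inequality $|S_{2q}(n)|\leq \mathcal N_n^{2q-6}|S_6(n)|$ combined with Lemma \ref{lemma BB}. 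Applying H\"older's inequality to each Leonov-Shiryaev product $\prod_{i=1}^{2q}\rho^{(i)}(z)$ then yields $\int_{\mathbb T}|F_{2q}(z)|\,dz\leq c_q\, R_n(6)$, where the combinatorial constant $c_q$ is bounded by the number of Leonov-Shiryaev pairings (at most $(2q)!$) weighted by squared Hermite coefficients $\beta_\alpha^2$.

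Summation over $q\geq 3$ is finally handled by balancing the factorial growth of $c_q$ against the factorial decay of $\beta_\alpha^2$ inherited from the Hermite expansions of the two Dirac factors and of $|\widetilde{J}|$ (each carrying weights of the type $1/(2k)!$ in front of $H_{2k}$), thus producing an absolutely convergent series and hence $\sum_{q\geq 3}\Var(I_n[2q]) = O(E_n^2 R_n(6))$. The main obstacle will be precisely the estimate of the previous paragraph: turning the pointwise bound $|\rho|\leq 1$ into an integrated bound of order $R_n(6)$ uniformly in $q$ forces one to extract the arithmetic constraint from the $6$-correlation set $S_6(n)$ via Lemma \ref{lemma BB}, since no comparable bound is available for general $2q$-correlation sets $S_{2q}(n)$ --- and indeed, it is exactly this mechanism that ensures higher chaoses do not overwhelm the leading contribution $\Var(I_n[4])$ in relation \eqref{ciao}.
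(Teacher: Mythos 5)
Your reduction of each fixed $\Var(I_n[2q])$ to an integral of products of $2q$ normalized covariances, bounded via $|\rho|\le 1$ on all but six factors and a generalized H\"older inequality against $R_n(6)$, is fine for each single $q$. The genuine gap is the final step: the series over $q$ of the resulting bounds does \emph{not} converge, and no ``balance between the factorial growth of $c_q$ and the factorial decay of the Hermite coefficients'' can save it. The number of Leonov--Shiryaev pairings at level $2q$ is of order $(2q)!$, while the natural size of the squared coefficients is only $\beta_{2l}^2/(2l)!\approx l^{-1/2}$ (and similarly $\alpha^2_{a,b,c,d}/(a!b!c!d!)$ is merely summable, with no geometric gain): already the single multi-index $i_1=a_1=2q$, all other indices zero, contributes a term of order $\big(\beta_{2q}^2/(2q)!\big)\,\alpha_{0,0,0,0}^2\,R_n(6)\approx q^{-1/2}R_n(6)$ to your bound, so $\sum_{q\ge 3}c_q=\infty$. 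In other words, once you have used the crude pointwise bound $|\rho|\le 1$ to discard $2q-6$ factors, there is nothing left that decays in $q$, and the multinomial factor $(2q)!/(i_1!\cdots j_3!)\le 6^{2q}$ only makes matters worse.

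This is exactly why the paper does not argue globally: it partitions $\T$ into squares of side $\approx E_n^{-1/2}$ and splits the covariance of the projections onto $C_{\ge 6}$ over pairs of squares. On \emph{non-singular} pairs all relevant correlations are bounded by a fixed small $\eps_1$, so each product of $2q$ covariances carries an extra factor $\eps_1^{2q-6}$; it is this geometric factor $(\sqrt{\eps_1})^{2q}$ (with $6\sqrt{\eps_1}<1$), and not any coefficient decay, that makes the sum over $q\ge 3$ converge in Lemma \ref{varianza nonsing}. On \emph{singular} pairs --- where correlations may be close to $\pm 1$ and the Hermite/moment expansion is uncontrollable --- the argument is entirely different: Cauchy--Schwarz and stationarity reduce everything to the second factorial moment of the number of zeros in one small square, which is bounded via the Kac--Rice two-point correlation function, hypercontractivity, the Kabluchko--Zaporozhets determinant bound and the Taylor expansion of $|\Omega_n(x)|/(1-r_n(x)^2)$ near the origin (Lemmas \ref{varianza sing}, \ref{lemma taylor}), together with the count of singular pairs in Lemma \ref{bello}. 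Your proposal has no counterpart to either ingredient, and without them the estimate you aim for cannot be closed.
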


Part 3 of Theorem \ref{t:main} follows immediately from the relation
$$
\mathbb{P}\left[ \left| \frac{I_{n}}{\pi n} - 1 \right| >\epsilon \right]\leq \frac{ \Var({I_{n}}/(\pi n) )^{1/2} }{\epsilon}
$$
(which is a consequence of the Markov inequality), as well as from Part 1 and Part 2 of the same Theorem. Finally, the proof of Part 4 of Theorem \ref{t:main} relies on a careful and technical investigation of $I_n[4]$, leading us to the following result, which indeed coincides with \eqref{e:j}, once replacing $\frac{I_{n_j}[4]}{\sqrt{\Var(I_{n_j}[4])}}$ with $\widetilde I_{n_j}$.
\begin{proposition}\label{limite 4}
Let $\lbrace n_j\rbrace_j\subset \lbrace n\rbrace$ be a subsequence such that $\mathcal N_{n_j}\to +\infty$ and $|\widehat \mu_{n_j}(4)|\to \eta$, then
$$
\frac{I_{n_j}[4]}{\sqrt{\Var(I_{n_j}[4])}}\, \stackrel{\rm law}{\Longrightarrow} \,
\mathcal{J}_\eta,
$$
where $\mathcal{J}_\eta$ is defined in \eqref{e:j}.
\end{proposition}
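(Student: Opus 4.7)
The plan is to derive a quadratic-form representation of $I_n[4]$ in certain empirical Gaussian linear statistics built from the complex coefficients $\{a_\lambda,\widehat a_\lambda\}$, and then to invoke a multivariate CLT followed by the continuous mapping theorem. Starting from the Kac-Rice-type representation \eqref{formula integrale1} of $I_n$, I would expand the Dirac kernel $\delta_{\bf 0}({\bf T}_n(x))$ and the absolute Jacobian $|J_{{\bf T}_n}(x)|$ in Hermite series against the standard Gaussian densities on $\R^2$ and on $\R^{2\times 2}$ respectively; the pointwise independence of the six variables appearing in \eqref{e:six} makes the resulting multi-Hermite series completely factorized at each $x\in\mathbb T$. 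The projection $I_n[4]$ is then the finite collection of terms with total Hermite index equal to four, integrated over $x\in\mathbb T$.

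After substituting \eqref{e:realim}--\eqref{e:partial} and integrating in $x$, every such term collapses via orthogonality of the exponentials $e_\lambda$ to a finite sum indexed by the set $S_4(n)$ of four-correlations, classified by Lemma \ref{l:zygmund} into the four cases (a)--(d). Case (a) produces the deterministic pieces that enter the variance Proposition \ref{varianze chaos}, while cases (b), (c), (d) generate quadratic monomials of the form $|a_\lambda|^2|a_\mu|^2$, $|\widehat a_\lambda|^2|\widehat a_\mu|^2$, $|a_\lambda|^2|\widehat a_\mu|^2$, and cross-terms of type $a_\lambda\overline{a_{-\lambda'}}\widehat a_\mu\overline{\widehat a_{-\mu'}}$, each endowed with trigonometric weights coming from the gradients \eqref{e:partial}. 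Regrouping, $I_n[4]$ can be written, up to lower-order terms controlled by Proposition \ref{asymptotic}, as a degree-two polynomial in the empirical statistics
\begin{equation*}
\mathcal A_n^{(k)}:=\frac{1}{\sqrt{\mathcal N_n}}\sum_{\lambda\in\Lambda_n}(|a_\lambda|^2-1)\left(\frac{\lambda_1+i\lambda_2}{\sqrt n}\right)^{\!k},\qquad k\in\{0,\pm 4\},
\end{equation*}
their $\widehat a$-analogues $\widehat{\mathcal A}_n^{(k)}$, and mixed cross-statistics built from products $a_\lambda\overline{\widehat a_\lambda}$, possibly weighted by $((\lambda_1+i\lambda_2)/\sqrt n)^k$.

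The limit is then obtained in two steps. First, the random vector assembled from all these statistics is a normalised linear functional of the independent Gaussian coefficients $\{(a_\lambda,\widehat a_\lambda):\lambda\in\Lambda_n^+\}$, where $\Lambda_n^+$ is a half-set chosen via $a_{-\lambda}=\overline{a_\lambda}$, so a multivariate CLT shows that it converges jointly in distribution to a centered Gaussian vector $\mathcal V_\infty$. The covariance of $\mathcal V_\infty$ is entirely determined by $\eta=\lim|\widehat\mu_{n_j}(4)|$: indeed the $z\mapsto iz$ invariance of $\mu_n$ kills every Fourier coefficient of order not divisible by $4$, while the phase of $\widehat\mu_{n_j}(4)$ can be absorbed by a global rotation of the torus leaving $I_n$ invariant. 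Second, the continuous mapping theorem promotes the joint CLT to convergence of the quadratic form, and an orthogonal diagonalisation of the limiting quadratic form yields exactly the right-hand side of \eqref{e:j}: the pieces $A$ and $B$ arise from the $T_n$-only and $\widehat T_n$-only self-quadratics, each carrying three Gaussian degrees of freedom coming from $\mathcal A_\infty^{(0)}\in\R$ and $\mathcal A_\infty^{(4)}\in\C$, and their respective weights $\frac{1\pm\eta}{2}$ emerge from diagonalising the $2\times 2$ covariance block of $(\mathcal A_{n_j}^{(4)},\overline{\mathcal A_{n_j}^{(4)}})$; the term $C$ collects the contribution of the mixed cross-statistics between $T_n$ and $\widehat T_n$, whose limit has the signature of $|Z|^2$ for $Z$ a standard complex Gaussian.

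The main obstacle will be the combinatorial bookkeeping: there are many six-tuples $(p_0,p_1,p_2,q_0,q_1,q_2)$ with $p_0+\cdots+q_2=4$, and for each many admissible four-correlations, so matching all coefficients to reproduce the precise combination $\frac{1+\eta}{2}A+\frac{1-\eta}{2}B-2(C-2)$ — in particular the characteristic signature underlying the law $2X_1^2+2X_2^2-4X_3^2$ of $A$ and $B$ — will require careful tracking of the Hermite coefficients of $|\det(\cdot)|$ and of $\delta_{\bf 0}$. A secondary technical point is to check that `off-diagonal' contributions not covered by Lemma \ref{l:zygmund} give only a vanishing term in the $E_n/\mathcal N_n$ normalisation, which can be handled with a bound of the type used to prove Proposition \ref{asymptotic}, specialised to the fourth chaos.
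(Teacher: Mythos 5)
Your overall strategy is the same as the paper's: expand $I_n[4]$ from the chaotic expansion, use the orthogonality of the $e_\lambda$'s together with the structure of $S_4(n)$ (Lemma \ref{l:zygmund}) to rewrite it as a quadratic polynomial in second-chaos empirical statistics, prove a multivariate CLT for those statistics, and conclude by continuous mapping plus an explicit diagonalisation (this is exactly Lemmas \ref{lemma infinito}, \ref{carino} and \ref{CLT} followed by the computation at the end of Section \ref{sec chaos 4}). However, the concrete representation you propose is wrong, and this is not just bookkeeping. The second-chaos statistics that actually occur carry the weights $1$, $\lambda_j/\sqrt n$ and $\lambda_i\lambda_j/n$ (the paper's $W(n),W_j(n),W_{1,2}(n)$, their hatted analogues, and the cross-statistics $M(n),M_j(n),M_{\ell,j}(n)$); these are angular harmonics of order $0,\pm1,\pm2$, not $0,\pm4$. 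A statistic weighted by $((\lambda_1+i\lambda_2)/\sqrt n)^4$ never appears: terms such as $W_1(n)W_2(n)$ or $W_{1,2}(n)^2$, which do appear in \eqref{in4}, cannot be expressed through your $\mathcal A_n^{(0)},\mathcal A_n^{(\pm4)}$. The coefficient $\widehat\mu_{n}(4)$ enters only through the \emph{covariances} of the harmonic-$2$ statistics (e.g. $\E[W_j(n)^2]=\tfrac{3+\widehat\mu_n(4)}{4}$, $\E[W_{1,2}(n)^2]=\tfrac{1-\widehat\mu_n(4)}{4}$), so with your basis the limiting covariance matrix, and hence the diagonalisation, cannot be carried out as described. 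Relatedly, the statistics are quadratic, not linear, functionals of the Gaussian coefficients, so the CLT needs an argument (Lindeberg over the independent blocks indexed by $\Lambda_n^+$, or the Fourth Moment Theorem as in Lemma \ref{CLT}); as linear functionals they would be exactly Gaussian and no CLT would be needed.

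Two further structural claims are incorrect. First, $\widehat\mu_n(4)$ is real (invariance of $\mu_n$ under $z\mapsto\bar z$), so the only ambiguity is its sign, and no ``global rotation of the torus'' can absorb it: the symmetries of $\Z^2$ fix $\widehat\mu_n(4)$. The paper instead extracts sub-subsequences along which $\widehat\mu_{n_j}(4)\to\pm\eta$ and verifies directly that the limiting law is unchanged under $\eta\mapsto-\eta$ (this uses $A\overset{\rm law}{=}B$ independent, so the weights $\tfrac{1+\eta}{2}$ and $\tfrac{1-\eta}{2}$ may be swapped). Second, your attribution of $A$ and $B$ to the ``$T_n$-only'' and ``$\widehat T_n$-only'' self-quadratics, with $C$ coming from the cross terms, cannot be right: $T_n$ and $\widehat T_n$ are exchangeable, so a decomposition of that type would force equal weights on $A$ and $B$, contradicting $\tfrac{1\pm\eta}{2}$. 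In the actual computation the $(1+\eta)$- and $(1-\eta)$-weighted pieces are attached to angular eigendirections, e.g. $W_1-W_2$, $\widehat W_1-\widehat W_2$, $M_{1,1}-M_{2,2}$ (variance $\to 1+\eta$) versus $W_{1,2}$, $\widehat W_{1,2}$, $M_{1,2}$ (variance $\to 1-\eta$), each of which mixes $T$-, $\widehat T$- and cross-statistics; the limit law \eqref{e:j} emerges only after this specific change of variables, which your plan as stated would not produce.
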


\subsubsection{Controlling the variance of higher-order chaoses}

In order to prove Proposition \ref{asymptotic}, we need to carefully control the remainder given by $\sum_{q\ge 3}\Var(I_n[2q])$; our argument (consisting in a substantial extension of techniques introduced in \cite[\S 6.1]{ORW} and \cite[\S 4.3]{RW2}) is the following.

We {{} partition} the torus into {a} union of disjoint squares $Q$ of side length $1/M$, where $M$ is proportional to $\sqrt{E_n}$. Of course
\begin{equation}\label{eq sum}
I_n=\sum_Q I_{n_{|_Q}},
\end{equation}
where $I_{n_{|_Q}}$ is the number of zeroes contained in $Q$. It holds that, for every $q\ge 0$, $
I_n[q]=\sum_Q I_{n_{|_Q}}[q]$ and hence
\begin{equation}\label{sum cov}
\Var\left( \sum_{q\ge 3} I_n[2q]  \right) = \sum_{Q,Q'} \Cov\left(\text{proj} \left(I_{n_{|_Q}}|C_{\ge 6}\right)   , \text{proj} \left(I_{n_{|_{Q'}}} |C_{\ge 6}\right)\right),
\end{equation}
where $\text{proj}\left(\cdot |C_{\ge 6}   \right)$ denotes the orthogonal projection onto $\bigoplus_{q\ge 6} C_q$, that is, the orthogonal sum of Wiener chaoses of order larger or equal than six.

We now split the double sum on the RHS of \paref{sum cov} into two parts: namely, one over {\it singular} pairs of cubes and the other one over {\it non-singular} pairs of cubes. Loosely speaking, for a pair of non-singular cubes $(Q,Q')$, we have that for every $(z,w)\in Q\times Q'$, the covariance function $r_n$ of the field $T_n$ and all its normalized derivatives up to the order two $\widetilde \partial_i r_n, \widetilde \partial_{ij} r_n:=(E_n/2)^{-1}\partial/\partial_{x_i x_j} r_n$ for $i,j=1,2$ are bounded away from 1 and $-1$, once evaluated in $z-w$  (see Definition \ref{sing point} and Lemma \ref{lemma sing}).
\begin{lemma}[Contribution of the singular part]\label{varianza sing}
As $\mathcal N_n\to +\infty$,
\begin{equation}\label{eq sing}
\left | \sum_{(Q,Q')\text{ sing.}} \Cov\left (\text{proj}\left (I_{n_{ |_Q}}|C_{\ge 6}\right ), \text{proj}\left (I_{n_{ |_{Q'}}}|C_{\ge 6}\right )\right ) \right | \ll E_n^2 R_n(6).
\end{equation}
\end{lemma}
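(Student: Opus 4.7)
The idea is to apply Cauchy--Schwarz pairwise to the singular sum and then balance a uniform per-cube variance bound against a count of singular pairs. Since orthogonal projection is $L^2$-contractive,
\begin{equation*}
|\Cov(\text{proj}(I_{n_{|_Q}}|C_{\geq 6}),\, \text{proj}(I_{n_{|_{Q'}}}|C_{\geq 6}))|\ \leq\ \Var(I_{n_{|_Q}})^{1/2}\,\Var(I_{n_{|_{Q'}}})^{1/2}
\end{equation*}
for any pair $(Q,Q')$. Each cube $Q$ has side $1/M\sim 1/\sqrt{E_n}$, so by \paref{e:locexp} one has $\E[I_{n_{|_Q}}] = \pi n/M^2 = O(1)$, and the goal becomes a uniform bound $\Var(I_{n_{|_Q}}) \leq C$ independent of $n$ and $Q$, together with an accurate count of singular pairs.

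\textbf{Per-cube variance and counting.} For the variance bound, I would run a Kac--Rice second-factorial-moment calculation on $Q \times Q$: the only singularity of the Kac--Rice kernel in $Q\times Q$ lies along the diagonal $z=w$, since by Proposition \ref{gio} no off-diagonal point in the displacement set $Q-Q$ satisfies $r_n(\cdot)=\pm 1$, and the diagonal singularity is integrable by the standard stationary-Gaussian regularity theory. As a deterministic safety net, Lemma \ref{l:few} combined with a Bulinskaya-type nondegeneracy argument gives the almost-sure pointwise bound $I_{n_{|_Q}}\leq \alpha(\mathcal N_n)$. For counting, a pair $(Q,Q')$ is singular precisely when there exist $(z,w)\in Q\times Q'$ such that one of $r_n$, $\widetilde\partial_i r_n$, $\widetilde\partial_{ij} r_n$ is close to $\pm 1$ at $z-w$. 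By Proposition \ref{gio} together with analogues for the normalised derivatives (proved by the same lattice-geometric argument based on inspecting when $e_\lambda(z)=\pm 1$ for every $\lambda \in \Lambda_n$), the set of exceptional points on $\mathbb T$ is discrete, with at most one representative in each cube of side $\sim 1/\sqrt n$. Consequently every cube $Q$ has $O(1)$ singular partners $Q'$, for a total of $O(M^2)=O(E_n)$ singular pairs.

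\textbf{Assembly and main obstacle.} Combining the two ingredients yields
\begin{equation*}
\Big|\sum_{(Q,Q')\,\text{sing.}}\Cov(\text{proj}(I_{n_{|_Q}}|C_{\geq 6}),\text{proj}(I_{n_{|_{Q'}}}|C_{\geq 6}))\Big|\ \ll\ E_n\cdot C,
\end{equation*}
which is $o(E_n^2 R_n(6))$ because Lemma \ref{lemma BB} and \paref{momento 6} give $E_n^2 R_n(6)= \Theta(E_n^2/\mathcal N_n^{5/2})$, and $\mathcal N_n^{5/2}=o(E_n)$ in the high-energy regime (indeed $\mathcal N_n=o(n^\epsilon)$ for every $\epsilon>0$ while $E_n\sim n$). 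The principal obstacle is the per-cube variance bound: although Proposition \ref{gio} removes the off-diagonal degeneracies of the Kac--Rice kernel, the conditional density of ${\bf T}_n(y)$ given ${\bf T}_n(x)$ still degenerates as $y\to x$, and one must show that its near-diagonal asymptotics are controlled uniformly in $n$ (this is where the absolute convergence in \paref{e:explosion} is genuinely used). A secondary technicality is extending Proposition \ref{gio} to the normalised derivatives $\widetilde\partial_i r_n$ and $\widetilde\partial_{ij} r_n$, a routine but careful lattice computation that identifies all the exceptional points appearing in the singular set.
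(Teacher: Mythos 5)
Your treatment of the per-cube moment is essentially the paper's own: Cauchy--Schwarz and stationarity reduce the problem to $\Var\left(\text{proj}\left(I_{n_{|_{Q_0}}}|C_{\ge 6}\right)\right)\le \E\left[I_{n_{|_{Q_0}}}^2\right]$, the mean is $O(1)$ by \paref{e:locexp}, and the second factorial moment is computed by Kac--Rice on $Q_0\times Q_0$, with Proposition \ref{gio} guaranteeing non-degeneracy off the diagonal; the uniform near-diagonal control you flag as the main obstacle is exactly what the paper supplies via hypercontractivity, the Kabluchko--Zaporozhets bound and the Taylor expansion of Lemma \ref{lemma taylor}, yielding $\E\left[I_{n_{|_{Q_0}}}\left(I_{n_{|_{Q_0}}}-1\right)\right]=O(1)$.

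The genuine gap is your count of singular pairs. By Definition \ref{sing point}, a pair $(Q,Q')$ is singular as soon as some $(z,w)\in Q\times Q'$ makes one of $r_n,\,\widetilde\partial_i r_n,\,\widetilde\partial_{ij}r_n$ exceed the \emph{small} threshold $\eps_1$ in absolute value (after normalisation); this has nothing to do with proximity to the degenerate values $\pm 1$. Your discreteness argument (Proposition \ref{gio} and its analogues for derivatives) only concerns the exceptional set $\{r_n=\pm1\}$, not the super-level set $\{|r_n|>\eps_1\}$, and the latter is an open set whose size is controlled only through moment (Chebyshev-type) bounds: this is precisely Lemma \ref{bello}, which gives ${\rm Leb}(B_Q)\ll \eps_1^{-6}R_n(6)$, hence $\ll E_n R_n(6)$ singular partners per cube and $\ll E_n^2 R_n(6)$ singular pairs in total --- typically far more than the $O(1)$ partners per cube you claim (note that $E_n R_n(6)\gg E_n/\mathcal N_n^{3}\to\infty$, and the very form of the bound in the lemma reflects this count). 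Without Lemma \ref{bello}, or an equivalent estimate on the super-level sets of $r_n$ and its scaled derivatives, your assembly step (total $\ll E_n\cdot C$) does not go through; as a minor additional point, Lemma \ref{lemma BB} only provides the upper bound $R_n(6)=O(\mathcal N_n^{-5/2})$, not a two-sided $\Theta$, though this does not affect the comparison since $R_n(6)\gg\mathcal N_n^{-3}$ in any case.
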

In order to show Lemma \ref{varianza sing} (see Section \ref{proof varianza sec}), we use the Cauchy-Schwarz inequality and the stationarity of ${\bf T}_n$, in order to reduce the problem to the investigation of nodal intersections in a small square $Q_0$ around the origin: for the LHS of \paref{eq sing} we have
\begin{equation*}
\begin{split}
&\left | \sum_{(Q,Q')\text{ sing.}} \Cov\left (\text{proj}\left (I_{n_{ |_Q}}|C_{\ge 6}\right ), \text{proj}\left (I_{n_{ |_{Q'}}}|C_{\ge 6}\right )\right ) \right |\cr
&\ll  \sum_{(Q,Q')\text{ sing.}} \E\left[I_{n_{ |_{Q_0}}}\left(I_{n_{ |_{Q_0}}}-1   \right) \right] +  \E\left[I_{n_{ |_{Q_0}}} \right].
\end{split}
\end{equation*}
Thus, we need to (i) count the number of singular pairs of cubes, (ii) compute the expected number of nodal intersections in $Q_0$ and finally (iii) calculate the second factorial moment of $I_{n_{ |_{Q_0}}}$. Issue (i) will be dealt with by exploiting the definition of singular pairs of cubes and the behavior of the moments of the derivatives of $r_n$ on the torus (see Lemma \ref{bello}), thus obtaining that
$$
| \lbrace (Q,Q')\text{ sing.}  \rbrace | \ll E_n^2 R_n(6).
$$
Relations \eqref{e:locexp} and \paref{eq sum} yield immediately that  $\E\left[I_{n_{ |_{Q_0}}} \right]$ is bounded by a constant independent of $n$.

To deal with (iii) is much subtler matter. Indeed, we need first to check the assumptions for Kac-Rice formula (see \cite[Theorem 6.3]{AW} ) to hold in  Proposition \ref{gio}.  The latter allows us to write the second factorial moment $ \E\left[I_{n_{ |_{Q_0}}}\left(I_{n_{ |_{Q_0}}}-1   \right) \right]$ as an integral on $Q_0\times Q_0$ of the so-called {\bf two-point correlation} function $K_2$, given by
$$
K_2(x,y) := p_{({\bf T}_n(x),{\bf T}_n(y))}({\bf 0},{\bf 0})\E\left[ \left | J_{{\bf T}_n}(x)\right |  \left | J_{{\bf T}_n}(y)\right |\Big |{\bf T}_n(x)={\bf T}_n( y)={\bf 0}   \right],
$$
where {$x,y\in \mathbb T$ and} $p_{({\bf T}_n(x),{\bf T}_n(y))}$ is the density of $({\bf T}_n(x),{\bf T}_n(y))$.

The stationarity of ${\bf T}_n$ then reduces the problem to investigating $K_2(x):=K_2(x,0)$ around the origin. Hypercontractivity properties of Wiener chaoses and formulas for the volume of ellipsoids (see \cite{KZ}) yield the following estimation
\begin{equation}\label{psi n}
K_2(x)\ll \frac{|\Omega_n(x)|}{1-r_n(x)^2}=: \Psi_n(x),
\end{equation}
where $| \Omega_n(x)|$ stands for the absolute value of the determinant of the matrix $\Omega_n(x)$, defined as the covariance matrix of the vector $\nabla T_n(0)$, conditionally on $T_n(x)=T_n(0)=0$.  An explicit Taylor expansion at $0$ for $\Psi_n$  (made particularly arduous by the diverging integral in \eqref{e:explosion} --- see Lemma \ref{lemma taylor}) will allow us to prove that $ \E\left[I_{n_{ |_{Q_0}}}\left(I_{n_{ |_{Q_0}}}-1   \right) \right]$ is also bounded by a constant independent of $n$. This concludes the proof of Lemma \ref{varianza sing}.


To achieve the proof of Theorem \ref{t:main}, we will eventually show the following result, whose proof relies on Proposition \ref{p:ls}, on the definition of non-singular cubes, as well as on the behavior of even moments  of derivatives of the covariance function $r_n$.
\begin{lemma}[Contribution of the non-singular part]\label{varianza nonsing}
 As $\mathcal N_n\to +\infty$, we have
\begin{equation*}
  \left| \sum_{(Q,Q')\text{ non sing.}} \Cov\left (\text{proj}\left(I_{n_{|_{Q}}}|C_{\ge 6}\right),\text{proj}\left(I_{n_{|_{Q'}}}|C_{\ge 6}\right) \right) \right|=O\left( E_n^2 R_n(6) \right).
\end{equation*}
\end{lemma}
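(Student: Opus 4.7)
The plan is to exploit two complementary facts: (i) by definition of non-singular pairs (Definition \ref{sing point} and Lemma \ref{lemma sing}), the pairwise covariances between the six basic Gaussians
\[
\mathbf Z(x) := (T_n(x), \widetilde\partial_1 T_n(x), \widetilde\partial_2 T_n(x), \hT(x), \widetilde\partial_1 \hT(x), \widetilde\partial_2 \hT(x))
\]
evaluated at $x\in Q$ and $y\in Q'$ are uniformly bounded away from $\pm 1$ in absolute value; and (ii) the projection onto $C_{\ge 6}$ selects only Hermite components of total degree at least six, so that via Proposition \ref{p:ls} the covariance of the projections naturally produces products of at least six such covariance factors.

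Concretely, starting from the Kac-Rice representation \eqref{formula integrale1}, I would factor out the normalization inherent to the Jacobian, writing $|J_{{\bf T}_n}| = (E_n/2)|\widetilde J_{{\bf T}_n}|$, and expand $\delta_{\bf 0}({\bf T}_n(x))|\widetilde J_{{\bf T}_n}(x)|$ (or a standard $L^2$ approximation thereof) in the orthonormal basis of products of Hermite polynomials $\prod_{j=0}^5 H_{\alpha_j}(Z_j(x))/\sqrt{\alpha_j!}$, with coefficients $\beta_\alpha$ independent of both $n$ and $x$ by stationarity. This yields $I_{n|Q}[2q] = (E_n/2)\sum_{|\alpha|=2q}\beta_\alpha \int_Q \prod_j H_{\alpha_j}(Z_j(x))/\sqrt{\alpha_j!}\,dx$. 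Applying Proposition \ref{p:ls} to each resulting $\E[\prod_j H_{\alpha_j}(Z_j(x))\cdot\prod_j H_{\beta_j}(Z_j(y))]$ expands it into a finite sum (of at most $(2q)!$ terms) of products of $2q$ pairwise covariances $\E[Z_i(x)Z_j(y)]$, each being, up to sign, one of $r_n(x-y)$, $\widetilde\partial_i r_n(x-y)$, or $\widetilde\partial_{ij} r_n(x-y)$; every cross-term between $T_n$- and $\hT$-components vanishes by Proposition \ref{p:realim}.

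On a non-singular pair $(Q,Q')$, every such entry is bounded in absolute value by some universal constant $c\in(0,1)$ uniformly in $(x,y)\in Q\times Q'$, so each $2q$-fold product is pointwise bounded by $c^{2q-6}$ times any fixed choice of six such entries. Summing over $q\geq 3$, the geometric factor $c^{2q-6}$ ensures convergence provided $\sum_{|\alpha|=2q}\beta_\alpha^2$ grows only polynomially in $q$, which follows from the $L^2(\gamma^6)$-norm bound on the regularized integrand. Combining these ingredients, summing over non-singular $(Q,Q')$, invoking stationarity of ${\bf T}_n$ to extend the spatial integration to $\mathbb T$, and applying H\"older's inequality yields the key reduction
\[
\Bigl|\sum_{(Q,Q')\text{ non sing.}}\Cov\bigl(\text{proj}(I_{n|Q}|C_{\ge 6}),\text{proj}(I_{n|Q'}|C_{\ge 6})\bigr)\Bigr| \ll E_n^2\, \max_{\widetilde\partial_\alpha}\int_{\mathbb T}|\widetilde\partial_\alpha r_n(z)|^6\,dz,
\]
where the maximum ranges over all normalized partial derivatives of order $\leq 2$.

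The main obstacle is therefore to prove the uniform estimate $\int_{\mathbb T}|\widetilde\partial_\alpha r_n|^6\,dz = O(R_n(6))$. This is a purely combinatorial calculation: each $\widetilde\partial_\alpha r_n(z)$ is a real trigonometric polynomial supported on $\Lambda_n$ whose Fourier coefficients are of the form $(\lambda_i/\sqrt n)^{|\alpha|}/\mathcal N_n$ up to harmless constants, hence, expanding the sixth power and integrating term-by-term,
\[
\int_{\mathbb T}|\widetilde\partial_\alpha r_n(z)|^6\,dz \leq \frac{C_\alpha}{\mathcal N_n^6}\sum_{\boldsymbol\lambda\in S_6(n)}\prod_{l=1}^6\Bigl|\frac{\lambda^{(l)}_{i_l}}{\sqrt n}\Bigr|^{|\alpha|} \leq \frac{C_\alpha\,|S_6(n)|}{\mathcal N_n^6} = C_\alpha\, R_n(6),
\]
since $|\lambda_i/\sqrt n|\leq 1$. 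Plugging this back into the previous display produces the desired $O(E_n^2 R_n(6))$ bound.
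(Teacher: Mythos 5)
Your overall route is the same as the paper's: expand the restricted projections in Hermite polynomials of the six independent normalized Gaussians, apply Proposition \ref{p:ls} to turn each covariance into a sum of products of $2q$ covariance kernels, use the non-singularity to bound all but six factors by the threshold constant, and then reduce the surviving six-fold product to $R_n(6)$ via H\"older together with the bound $\int_\T |\widetilde\partial_\alpha r_n|^6\le |S_6(n)|/\mathcal N_n^6=R_n(6)$ (your combinatorial computation of the latter is essentially the paper's Lemma \ref{bello} estimate, and is fine).

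The gap is in the summability of the resulting series over $q$ and over the Hermite multi-indices. The Leonov--Shiryaev expansion contributes up to $(2q)!$ terms at level $2q$, and your justification --- that convergence holds ``provided $\sum_{|\alpha|=2q}\beta_\alpha^2$ grows only polynomially in $q$, which follows from the $L^2(\gamma^6)$-norm bound on the regularized integrand'' --- fails on two counts. First, there is no such uniform $L^2$ bound: the formal integrand involves $\delta_0\otimes\delta_0$, which is not in $L^2(\gamma^2)$ (this is exactly the point the paper makes about $B_n$ via \eqref{e:explosion}); concretely $\beta_{2l}^2/(2l)!\asymp l^{-1/2}$ is not summable, so the $\varepsilon$-regularizations have diverging $L^2$ norms. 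Second, even granting polynomial growth of the squared coefficients at fixed degree, a bound of the form $\sum_q (2q)!\,\mathrm{poly}(q)\,c^{2q}$ diverges for every fixed $c\in(0,1)$: the factorial from the diagram formula must be absorbed, not merely multiplied by a geometric factor. The paper's proof does this by keeping the coefficients with the full factorials $1/(i_1!\cdots j_3!)$ in the denominator, using the multinomial bound $(i_1+\cdots+j_3)!/(i_1!\cdots j_3!)\le 6^{2q}$, a Cauchy--Schwarz step against a geometric measure on $\N^{12}$, and --- crucially --- the fact that the singularity threshold $\eps_1$ in Definition \ref{sing point} is a free parameter chosen so small that $6\sqrt{\eps_1}<1$. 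In your write-up the constant $c$ is treated as an arbitrary number in $(0,1)$, so this essential smallness (and the mechanism that defeats the $(2q)!$) is missing; as stated, the series you need to sum is not shown to converge.
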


The rest of the paper contains the formal proofs of all the statements discussed in the present section.

\section{Phase singularities and Wiener chaos}\label{wiener sec}

\subsection{Chaotic expansions for phase singularities}\label{chaos sec}

In this part we find the chaotic expansion \paref{series} for $I_n$. The first achievement  in this direction is the following approximation result.

\subsubsection{An integral expression for the number of zeros}\label{sec approx}

For $\eps>0$ and $n\in S$, we consider the $\eps$-approximating random variable
\begin{equation}\label{formula integrale1eps}
I_n(\varepsilon):=\frac{1}{4\eps^2}
\int_{\mathbb T}{\mathbf 1}_{[-\varepsilon,\varepsilon]^2}({\mathbf T}_n(x))
|J_{{\mathbf T}_n}(x)|\,dx,
\end{equation}
where ${\bf 1}_{[-\eps, \eps]^2}$ denotes the indicator function of the square $[-\eps, \eps]^2$. The following result makes the formal equality in \paref{formula integrale1} rigorous.
\begin{lemma}\label{lemma approx}
For $n\in S$, with probability one, $I_n$ {{} is composed of a finite number of isolated points} and, as $\eps\to 0$,
\begin{equation}\label{e:cscs}
I_n(\varepsilon)\to I_n,
\end{equation}
both a.s. and in the {{} $L^p(\P)$-sense, for every $p\geq 1$. }
\end{lemma}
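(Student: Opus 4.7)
The plan is to establish Lemma \ref{lemma approx} in three stages: (a) almost sure finiteness of $\mathscr{I}_n$ together with nondegeneracy of every zero, (b) the pathwise identification $I_n(\eps)=I_n$ for all sufficiently small $\eps>0$, and (c) a deterministic pathwise bound on $I_n(\eps)$ that upgrades (b) to $L^p$-convergence via dominated convergence.

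For (a) I would apply a Bulinskaya-type transversality argument to the smooth map $F:\T\to\R^3$ defined by $F(x):=(T_n(x),\hT(x),\det J_{{\bf T}_n}(x))$. At each fixed $x$ the six Gaussian variables listed in \eqref{e:six} are independent and non-degenerate, so $(T_n(x),\hT(x))$ is independent of $\det J_{{\bf T}_n}(x)$; the latter is a quadratic form in the four Gaussian partials and a short characteristic-function computation shows it is (a scaling of) a Laplace variable, hence has bounded density. Consequently the density of $F(x)$ is uniformly bounded near the origin. Since $\dim\T=2<3=\dim\R^3$, the standard Bulinskaya lemma gives $F^{-1}(\mathbf{0})=\emptyset$ almost surely, so every zero of ${\bf T}_n$ is nondegenerate; such zeros are isolated points of the compact manifold $\T$, and are therefore a.s.\ finite in number.

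For (b), fix a realization for which every zero is nondegenerate. By the inverse function theorem each $x_0\in\mathscr{I}_n$ admits a neighborhood $U_{x_0}$, chosen pairwise disjoint, on which ${\bf T}_n$ is a diffeomorphism onto an open set containing $\mathbf{0}$. The complement $K:=\T\setminus\bigcup_{x_0} U_{x_0}$ is compact with $\inf_{x\in K}|{\bf T}_n(x)|>0$; hence for $\eps$ smaller than this infimum (and small enough that $[-\eps,\eps]^2\subset{\bf T}_n(U_{x_0})$ for every $x_0$), the integrand in \eqref{formula integrale1eps} vanishes on $K$ and the change of variables $y={\bf T}_n(x)$ on each $U_{x_0}$ yields $\int_{U_{x_0}}{\bf 1}_{[-\eps,\eps]^2}({\bf T}_n(x))|J_{{\bf T}_n}(x)|\,dx=4\eps^2$. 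Summing over the finite set $\mathscr{I}_n$ gives $I_n(\eps)=I_n$ identically for all sufficiently small $\eps$, so \eqref{e:cscs} holds almost surely.

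For (c), the area formula rewrites
\[
I_n(\eps)\,=\,\frac{1}{4\eps^2}\int_{[-\eps,\eps]^2} N(y)\,dy,\qquad N(y):=|\{x\in\T:{\bf T}_n(x)=y\}|.
\]
By Sard's theorem Lebesgue-almost every $y\in\R^2$ is a regular value of ${\bf T}_n$, and for any such $y$ a finite covering of $\T$ by $O(n)$ windows of radius $\pi/\sqrt n$ combined with Lemma \ref{l:few} applied to $P=T_n-y_1$, $Q=\hT-y_2$ yields a deterministic bound $N(y)\le C(n)$ depending only on $\mathcal N_n$. Consequently $I_n(\eps)\le C(n)$ almost surely, uniformly in $\eps>0$, and the bounded convergence theorem combined with (b) delivers $I_n(\eps)\to I_n$ in every $L^p(\P)$. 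The main obstacle is step (a): verifying Bulinskaya's hypothesis hinges on a uniform bound for the density of the non-Gaussian coordinate $\det J_{{\bf T}_n}(x)$ near the origin, which one obtains by exploiting the pointwise independence of the six Gaussians noted after \eqref{e:six}; once nondegeneracy is secured, steps (b) and (c) are structural consequences of the inverse function theorem, the area formula, and Lemma \ref{l:few}.
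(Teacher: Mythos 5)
Your proposal is correct and follows essentially the same route as the paper: almost-sure nondegeneracy of the zeros (you re-derive via Bulinskaya what the paper quotes from Aza\"is--Wschebor, Proposition 6.5), the inverse function theorem, the area formula, and the deterministic Khovanski\u{\i}-type bound of Lemma \ref{l:few} to get a uniform pathwise bound on $I_n(\eps)$ and conclude by dominated convergence. Your treatment of the a.s.\ step by a direct change of variables on disjoint neighborhoods of the zeros is just a streamlined equivalent of the paper's Sard-plus-stability-of-the-count argument, so the two proofs are essentially the same.
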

\noindent\begin{proof} {{} Fix $n\in S$. In order to directly apply some statements taken from \cite{AW}, we will canonically identify the random field $(x_1,x_2)\mapsto {\bf T}_n(x_1,x_2)$ with a random mapping from $\R^2$ to $\R^2$ that is 1-periodic in each of the coordinates $x_1, x_2$. In what follows, for $x\in \R^2$ we will write ${\bf T}_n(x,\omega)$ to emphasize the dependence of ${\bf T}_n(x)$ on $\omega\in \Omega$. We subdivide the proof into several steps, numbered from (i) to (vi).
\begin{itemize}

\item[(i)] First of all, since ${\bf T}_n$ is an infinitely differentiable stationary Gaussian field such that, for every $x\in \R^2$, the vector ${\bf T}_n(x)$ has a standard Gaussian distribution, one can directly apply \cite[Proposition 6.5]{AW} to infer that there exists a measurable set $\Omega_0\subset \Omega$ with the following properties: $\P(\Omega_0)=1$ and, for every $\omega \in \Omega_0$ and every $x\in \R^2$ such that ${\bf T}_n(x, \omega) =0$, one has necessarily that the Jacobian matrix $J_{{\bf T}_n}(x, \omega)$ is invertible.

\item[(ii)] A standard application of the inverse function theorem (see e.g. \cite[p. 136]{AT}) implies that, for every $\omega \in \Omega_0$, any bounded set $B\subset \R^2$ only contains a finite number of points $x$ such that ${\bf T}_n(x,\omega)=0$. This implies in particular that, with probability one, $\mathscr{I}_n$ (as defined in \eqref{e:zeroset}) is composed of a finite number of isolated points and $I_n <+\infty$.

\item[(iii)] Sard's Lemma yields that, for every $\omega \in \Omega_0$, there exists a set $ U_\omega \subset \R^2$ such that $U_\omega^c$ has Lebesgue measure $0$ and, for every $u\in U_\omega$ there is no $x\in \R^2$ such that ${\bf T}_n(x,\omega) = u$ and $J_{{\bf T}_n}(x,\omega)$ is not invertible. Note that, by definition, one has that $0\in U_\omega$ for every $\omega \in \Omega_0$.

{
\item[(iv)] Define $B := \{x = (x_1,x_2)\in \R^2 : 0\leq x_i<  1/L\}, \,\, i=1,2\}$, where $L$ is any positive integer such that $L>\sqrt{n}$. For every $u\in \R^2$, we set $I_{n,u}(B)$ to be the cardinality of the set composed of those $x\in B$ such that ${\bf T}_n(x) = u$; the quantity $I_{n,u}(\T)$ is similarly defined, in such a way that $I_{n,0}(\T)=I_n$.  Two facts will be crucial in order to conclude the proof: (a) for every $\omega \in \Omega_0$ and every $u=(u_1,u_2) \in U_\omega$, by virtue of Lemma \ref{l:few} as applied to the pair $(P,Q)$ given by
$$
P(x) = T_n(x,\omega)-u_1 \quad \mbox{and} \quad Q(x) = \widehat{T}_n(x,\omega)-u_2,
$$
as well as of the fact that $B\subset W$, one has that $I_{n,u}(B)(\omega)\leq \alpha(n)$, and (b) as a consequence of the inverse function theorem, for every $\omega \in \Omega_0$ there exists $\eta_\omega \in (0, \infty)$ such that the equality $I_n(\omega) = I_{n,u}(\T)(\omega)$ holds for every $u$ such that  $\| u \| \leq \eta_\omega$. Indeed, reasoning as in \cite[Proof of Theorem 11.2.3]{AT} if this was not the case, then there would exist a sequence $u_k \to 0$, $u_k \neq 0$,  and a point $x\in \T$ such that: (1) ${\bf T}_n(x,\omega) = 0$, and (2) for every neighborhood $V$ of $x$ (in the topology of $\T$) there exist $k\geq 1$ 
and $x_0, x_1\in V$ such that $x_0\neq x_1$ and ${\bf T}_n(x_0) = {\bf T}_n(x_1)  = u_k$ --- which is in contradiction with the inverse function theorem.


\item[(v)] By the area formula (see e.g. \cite[Proposition 6.1 and formula (6.2)]{AW}), one has that, for every $\omega \in \Omega_0$,
\begin{eqnarray}\label{e:asc}
&& \frac{1}{4\varepsilon^2}\int_{\T}{\mathbf 1}_{[-\eps, \eps]^2}({\mathbf T}_n(x, \omega))|J_{{\mathbf T}_n}(x, \omega )|dx\\
&&\quad\quad\quad\quad =\frac{1}{4\varepsilon^2}\int_{[-\varepsilon,\varepsilon]^2}I_{n,u}(\T)(\omega) \,du
= \frac{1}{4\varepsilon^2}\int_{[-\varepsilon,\varepsilon]^2\cap U_\omega}I_{n,u}(\T)(\omega) \,du,\notag
\end{eqnarray}
where we used the property that the complement of $U_\omega$ has Lebesgue measure 0. Since the integral on the right-hand side of \eqref{e:asc} equals $ I_{n}$ whenever $\eps \leq \eta_\omega/\sqrt{2}$, we conclude that \eqref{e:cscs} holds $\P$-a.s. .

\item[(vi)]  According to the discussion at Point (iv)-(a) above and using stationarity, one has that
$$
\P [I_n \leq L^2\alpha(n)] = \P\left[   \frac{1}{4\varepsilon^2}\int_{[-\varepsilon,\varepsilon]^2}I_{n,u}(\T) \,du \leq L^2\alpha(n)\right]= 1.
$$
The fact that \eqref{e:cscs} holds also in $L^p(\P)$ now follows from Point (v) and dominated convergence.}
\end{itemize}
}
\end{proof}

\subsubsection{Chaotic expansions}

Let us consider the collections of coefficients $\lbrace \beta_l : l\geq 0\rbrace$ and $\lbrace \alpha_{a, b, c, d} : a,b,c,d\geq 0\}$ defined as follows. For $l\ge 0$
\begin{equation}\label{e:beta}
\beta_{2l+1}:=0, \qquad \beta_{2l} := \frac{1}{\sqrt{2\pi}}H_{2l}(0),
\end{equation}
where (as before) $H_{2l}$ is the $2l$-th Hermite polynomial. For instance,
\begin{equation}\label{beta piccoli}
\beta_0=\frac{1}{{\sqrt{2\pi}}},\qquad \beta_2=-\frac{1}{{\sqrt{2\pi}}},\qquad \beta_4 = \frac{3}{\sqrt{2\pi}}.
\end{equation}
Also, we set
\begin{equation}\label{e:alpha}
\alpha_{a,b,c,d} := \E[|XW-YV|H_a(X)H_b(Y)H_c(V)H_d(W)],
\end{equation}
with $(X,Y,V, W)$  a standard real four-dimensional Gaussian vector. Note that
on the right-hand side of \paref{e:alpha}, $|XW-YV|$ is indeed the absolute value of the determinant of
the matrix
$$
\left ( \begin{matrix}
&X &Y\\
&V &W
\end{matrix} \ \right).
$$
\begin{lemma}\label{nullity}
If $a,b,c,d$ do not have the same parity, then $$\alpha_{a,b,c,d}=0.$$
\end{lemma}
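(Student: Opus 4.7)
My approach will exploit symmetries of the integrand in \eqref{e:alpha} under sign flips of the four Gaussian coordinates, combined with the parity relation \eqref{e:sym} for Hermite polynomials. The idea is that the absolute value $|XW-YV|$ is invariant under several natural sign-flip involutions of $\R^4$, so that a change of variables can be performed inside the expectation without changing the measure, and the only effect on the integrand is to multiply the Hermite product by a suitable $\pm 1$.

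First I would verify that $|XW-YV|$ is invariant under each of the following three transformations: (i) $(X,V)\mapsto(-X,-V)$, (ii) $(Y,W)\mapsto(-Y,-W)$, and (iii) $(X,Y)\mapsto(-X,-Y)$. Each check is a one-liner: for (i), $|(-X)W - Y(-V)|=|-XW+YV|=|XW-YV|$, and the other two cases are analogous. Since the standard Gaussian distribution of $(X,Y,V,W)$ on $\R^4$ is also invariant under any of these three maps, performing the corresponding change of variables inside \eqref{e:alpha} leaves the factor $|XW-YV|$ and the density untouched; by the parity identity $H_k(-t)=(-1)^k H_k(t)$ recalled in \eqref{e:sym}, the Hermite product picks up the respective global signs $(-1)^{a+c}$, $(-1)^{b+d}$, and $(-1)^{a+b}$.

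This yields the three scalar identities
\[
\alpha_{a,b,c,d} \;=\; (-1)^{a+c}\,\alpha_{a,b,c,d} \;=\; (-1)^{b+d}\,\alpha_{a,b,c,d} \;=\; (-1)^{a+b}\,\alpha_{a,b,c,d}.
\]
Hence $\alpha_{a,b,c,d}=0$ as soon as at least one of $a+c$, $b+d$, or $a+b$ is odd. Conversely, if all three sums are even, then $a\equiv c$, $b\equiv d$ and $a\equiv b$ modulo $2$, which forces $a,b,c,d$ to share a common parity. Taking the contrapositive is exactly the statement of the lemma.

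I do not anticipate any real obstacle: the argument is purely a symmetry-and-parity check. The only minor point requiring a moment's thought is to select a collection of sign-flip involutions whose parity constraints jointly force all four indices to be congruent modulo $2$; the three flips above do the job, although other choices (for instance, replacing (iii) by $(V,W)\mapsto(-V,-W)$, which contributes the factor $(-1)^{c+d}$) would work equally well.
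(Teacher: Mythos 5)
Your proof is correct and follows essentially the same route as the paper: both exploit the invariance of $|XW-YV|$ and of the standard Gaussian law under sign-flip involutions, together with the Hermite parity relation \eqref{e:sym}. The only cosmetic difference is that you derive the three parity identities $(-1)^{a+c}$, $(-1)^{b+d}$, $(-1)^{a+b}$ simultaneously and conclude by contraposition, whereas the paper argues case by case after assuming without loss of generality that $a$ is odd.
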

\noindent\begin{proof}
Let us assume without loss of generality (by symmetry) that $a$ is odd and that at least one integer among $b$, $c$ and $d$ is even. We will exploit \eqref{e:sym}. If $b$ is even, then, using that
$(X,Y,V, W)\overset{\rm law}{=}(-X,-Y,V, W)$, one can write that
\begin{eqnarray*}
\alpha_{a,b,c,d}&=&\E[|XW-YV|H_a(X)H_b(Y)H_c(V)H_d(W)]\\
&=&\E[|YV-XW|H_a(-X)H_b(-Y)H_c(V)H_d(W)]\\
&=&-\E[|XW-YV|H_a(X)H_b(Y)H_c(V)H_d(W)]=-\alpha_{a,b,c,d},
\end{eqnarray*}
leading to $\alpha_{a,b,c,d}=0$.
If $c$ (resp. $d$) is even, the same reasoning based on
$(X,Y,V, W)\overset{\rm law}{=}(-X,Y,-V, W)$
(resp. $(X,Y,V, W)\overset{\rm law}{=}(-X,Y,V,-W)$)
 leads to the same conclusion.

\end{proof}

We will not need the explicit values of $\alpha_{a,b,c,d}$, unless $a+b+c+d\in \lbrace{0,2,4\rbrace }$. The following technical result will be proved in Section \ref{appendix}.

\begin{lemma}\label{alpha piccoli} It holds that
\begin{equation*}
\begin{split}
\alpha_{0,0,0,0}&=1,\\
\alpha_{2,0,0,0}&= \alpha_{0,2,0,0}=\alpha_{0,0,2,0}=\alpha_{0,0,0,2}=\frac12,\\
\alpha_{4,0,0,0} &=\alpha_{0,4,0,0}=\alpha_{0,0,4,0}=\alpha_{0,0,0,4}=-\frac{3}{8},\\
\alpha_{2,2,0,0}&=\alpha_{0,0,2,2}=\alpha_{2,2,0,0}=-\frac{1}{8},\\
\alpha_{2,0,2,0}&=\alpha_{0,2,0,2}-\frac{1}{8},\\
\alpha_{2,0,0,2}&=\alpha_{0,2,2,0}=\frac{5}{8},\\
\alpha_{1,1,1,1}&=-\frac{3}{8}.
\end{split}
\end{equation*}
\end{lemma}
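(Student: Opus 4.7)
I will reduce the list to a handful of representative cases by invariance, and then compute each of those by a two-step conditioning argument. The starting observation is that $|XW-YV|$ is the absolute value of $\det\begin{pmatrix} X & Y \\ V & W\end{pmatrix}$, which is invariant under matrix transposition, row swap and column swap; since the law of the standard Gaussian vector $(X,Y,V,W)$ is invariant under the induced coordinate permutations, I obtain the three identities
$$\alpha_{a,b,c,d} = \alpha_{a,c,b,d} = \alpha_{c,d,a,b} = \alpha_{b,a,d,c}.$$
Combined with Lemma \ref{nullity}, this cuts the table down to six independent representatives: $(0,0,0,0)$, $(2,0,0,0)$, $(4,0,0,0)$, $(2,2,0,0)$, $(2,0,0,2)$ and $(1,1,1,1)$.

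\textbf{Key computation.} Given $(X,Y) = (x,y)$ with $s := x^2+y^2>0$, introduce the orthogonal rotation of $(V,W)$ defined by
$$\tilde\xi := \frac{xW - yV}{\sqrt{s}}, \qquad \tilde\eta := \frac{yW + xV}{\sqrt{s}};$$
conditionally on $(X,Y)$, the pair $(\tilde\xi,\tilde\eta)$ is again a standard Gaussian in $\R^2$ and is independent of $(X,Y)$. In these variables
$$|XW-YV| = \sqrt{s}\,|\tilde\xi|, \qquad V = \frac{-y\tilde\xi + x\tilde\eta}{\sqrt{s}}, \qquad W = \frac{x\tilde\xi + y\tilde\eta}{\sqrt{s}},$$
and each $\alpha_{a,b,c,d}$ becomes
$$\alpha_{a,b,c,d} = \E\!\left[\sqrt{s}\,H_a(X)H_b(Y)\,F_{c,d}(X,Y)\right], \qquad F_{c,d}(x,y) := \E\!\big[|\tilde\xi|H_c(V)H_d(W)\big].$$
For each representative I expand $H_c(V)H_d(W)$ as a polynomial in $(\tilde\xi,\tilde\eta)$ with coefficients rational in $(x,y,1/\sqrt{s})$, then integrate termwise using the odd absolute moments $\E[|Z|^{2k+1}] = 2^k k!\sqrt{2/\pi}$ together with the even Gaussian moments $\E[Z^{2\ell}] = (2\ell-1)!!$. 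This produces $F_{c,d}$ as a homogeneous rational function of $(x,y)$; for instance, short calculations give $F_{0,0}=\sqrt{2/\pi}$, $F_{2,0}(x,y) = \sqrt{2/\pi}\,y^2/s$, $F_{0,2}(x,y) = \sqrt{2/\pi}\,x^2/s$, $F_{1,1}(x,y) = -\sqrt{2/\pi}\,xy/s$, and $F_{4,0}(x,y) = -\sqrt{2/\pi}\,y^4/s^2$. The outer expectation is then evaluated by switching $(X,Y)$ to polar coordinates $(R,\theta)$: $R$ is Rayleigh distributed (so $\E[R] = \sqrt{\pi/2}$, $\E[R^3] = 3\sqrt{\pi/2}$, $\E[R^5] = 15\sqrt{\pi/2}$) and $\theta$ is uniform on $[0,2\pi)$ and independent of $R$, reducing everything to standard trigonometric moments $\E[\cos^{2k}\theta\sin^{2\ell}\theta]$.

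\textbf{Anticipated obstacle.} Nothing here is conceptually deep; the difficulty is purely organizational. The most delicate point is that the orbits of $(2,2,0,0)$ and of $(2,0,0,2)$ are \emph{not} related by the symmetries above, and they produce the genuinely different values $-1/8$ and $5/8$; the discrepancy traces back to whether the factor $x^2$ or $y^2$ accompanies the moment $\E[|\tilde\xi|^3]$ in the expansions of $W^2$ versus $V^2$. Moreover, in the $(4,0,0,0)$ case the polynomial $8y^4 + 12x^2y^2 + 3x^4 - 6(x^2+2y^2)s + 3s^2$ telescopes to the strikingly clean $-y^4$; one should verify this cancellation \emph{before} integrating, both to avoid sign errors and to confirm that the apparent $1/s^{3/2}$ singularity is integrable near the origin (which it is, since the Jacobian $dxdy = r\,dr\,d\theta$ absorbs the relevant power). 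Once these cancellations are in place, each of the six computations reduces to plugging the Rayleigh and trigonometric moments into a one-line polar integral, yielding exactly the values claimed in the lemma.
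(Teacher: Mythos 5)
Your proposal is correct, and it reaches all the values of Lemma \ref{alpha piccoli} by a route that is genuinely different in its mechanics from the paper's. The paper computes each coefficient directly by passing to polar coordinates in \emph{both} pairs, writing $|XW-YV|=\rho\rho'|\sin(\theta-\theta')|$ and evaluating products of radial moments against double angular integrals of $|\sin(\theta-\theta')|$ times trigonometric polynomials, treating each index combination (after expanding the Hermite polynomials into monomials) essentially separately. You instead (i) exploit the invariance of $|XW-YV|$ under transposition, row swap and column swap, giving $\alpha_{a,b,c,d}=\alpha_{a,c,b,d}=\alpha_{c,d,a,b}=\alpha_{b,a,d,c}$, which legitimately cuts the table to the six representatives you list and, as a bonus, proves \emph{a priori} that $\alpha_{2,2,0,0}=\alpha_{2,0,2,0}=\alpha_{0,0,2,2}=\alpha_{0,2,0,2}$ (the paper obtains this only by computing $\E[|XW-YV|X^2Y^2]$ and $\E[|XW-YV|X^2V^2]$ separately, both $=15/8$); and (ii) replace the double angular integral by a conditional rotation of $(V,W)$ given $(X,Y)=(x,y)$, so that $|XW-YV|=\sqrt{s}\,|\tilde\xi|$ and everything reduces to odd absolute Gaussian moments, Rayleigh moments of $R=\sqrt{s}$, and single-angle trigonometric moments. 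I checked your intermediate quantities: the symmetries and orbit structure are as you claim (in particular $(2,2,0,0)$ and $(2,0,0,2)$ lie in different orbits, which is why $-1/8\neq 5/8$), the formulas $F_{0,0}=\sqrt{2/\pi}$, $F_{2,0}=\sqrt{2/\pi}\,y^2/s$, $F_{0,2}=\sqrt{2/\pi}\,x^2/s$, $F_{1,1}=-\sqrt{2/\pi}\,xy/s$, $F_{4,0}=-\sqrt{2/\pi}\,y^4/s^2$ are all correct (the cancellation to $-y^4$ does occur), and feeding them into the Rayleigh/trig moments reproduces exactly $1,\ \tfrac12,\ -\tfrac38,\ -\tfrac18,\ \tfrac58,\ -\tfrac38$; this also confirms that the displayed line $\alpha_{2,0,2,0}=\alpha_{0,2,0,2}-\frac18$ in the statement is a typo for $\alpha_{2,0,2,0}=\alpha_{0,2,0,2}=-\frac18$, consistent with the paper's own computation. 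What your approach buys is fewer and simpler integrals (no two-variable angular integral and only six base cases); what the paper's buys is a single uniform template that needs no symmetry bookkeeping. The only caveat is that your write-up is a plan with representative computations rather than a fully written-out proof, but every ingredient you state is correct and sufficient, so completing it is purely mechanical.
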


\begin{lemma}[Chaotic expansion of $I_n$]\label{berry's cancellation}
For $n\in S$ and $q\ge 0$, we have
\begin{equation}\label{chaos dispari}
I_n[2q+1]=0,
\end{equation}
and
\begin{equation}\label{proiezione q}
\begin{split}
I_n[2q]=&\frac{E_n}{2}\sum_{i_1+i_2+i_3+j_1+j_2+j_3=2q} \frac{\beta_{i_1}\beta_{j_1}}{i_1! j_1!}\frac{\alpha_{i_2 i_3 j_2 j_3}}{i_2! i_3! j_2! j_3!}\times\cr
&\!\!\!\!\times \int_{\mathbb T} H_{i_1}(T_n(x))H_{j_1}(\widehat T_n(x))H_{i_2}(\widetilde \partial_1 T_n(x))H_{i_3}(\widetilde \partial_2 T_n(x))H_{j_2}(\widetilde \partial_1 \widehat T_n(x))H_{j_3}(\widetilde \partial_2 \widehat T_n(x))\,dx,
\end{split}
\end{equation}
where the sum can be restricted to the set of those indices $(i_1,j_1,i_2,i_3,j_2,j_3)$ such that $i_1, j_1$ are even and $i_2, i_3, j_2, j_3$ have the same parity.
In particular,
\begin{equation}\label{chaos 2}
I_n[2]=0.
\end{equation}
The chaotic expansion for $I_n$ is hence
\begin{equation}\label{chaos exp}
\begin{split}
I_n=&I_n[0]+\sum_{q\ge 2}\frac{E_n}{2}\sum_{i_1+i_2+i_3+j_1+j_2+j_3=2q} \frac{\beta_{i_1}\beta_{j_1}}{i_1! j_1!}\frac{\alpha_{i_2 i_3 j_2 j_3}}{i_2! i_3! j_2! j_3!}\times\cr
&\times \int_{\mathbb T} H_{i_1}(T_n(x))H_{j_1}(\widehat T_n(x))H_{i_2}(\widetilde \partial_1 T_n(x))H_{i_3}(\widetilde \partial_2 T_n(x))H_{j_2}(\widetilde \partial_1 \widehat T_n(x))H_{j_3}(\widetilde \partial_2 \widehat T_n(x))\,dx,
\end{split}
\end{equation}
where the sum runs over the set of those indices $(i_1,j_1,i_2,i_3,j_2,j_3)$ such that $i_1, j_1$ are even and $i_2, i_3, j_2, j_3$ have the same parity.
\end{lemma}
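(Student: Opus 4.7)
The plan is to combine the $L^p(\P)$-approximation $I_n(\eps) \to I_n$ of Lemma \ref{lemma approx} with two explicit Hermite expansions. First, since the rescaled indicator $\frac{1}{2\eps}\mathbf{1}_{[-\eps,\eps]}(t)$ lies in $L^2(\gamma)$, it admits an expansion with coefficients
$$
b_k(\eps) := \frac{1}{2\eps}\int_{-\eps}^{\eps} H_k(s)\phi(s)\,ds,
$$
which satisfy $b_{2l+1}(\eps) = 0$ (by odd symmetry of $H_{2l+1}$) and $b_{2l}(\eps) \to H_{2l}(0)\phi(0) = \beta_{2l}$ as $\eps \to 0$ by dominated convergence. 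Second, since $|XW-YV| \in L^2(\R^4, \gamma^{\otimes 4})$, we have the multivariate Hermite expansion
$$
|XW-YV| = \sum_{i_2,i_3,j_2,j_3 \ge 0} \frac{\alpha_{i_2,i_3,j_2,j_3}}{i_2! i_3! j_2! j_3!} H_{i_2}(X)H_{i_3}(Y)H_{j_2}(V)H_{j_3}(W),
$$
with $\alpha_{i_2,i_3,j_2,j_3}$ as in \eqref{e:alpha}.

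Substituting these two expansions into \eqref{formula integrale1eps}, and using the factorisation $|J_{\mathbf{T}_n}(x)| = (E_n/2) |\widetilde\partial_1 T_n(x) \widetilde\partial_2 \widehat T_n(x) - \widetilde\partial_2 T_n(x) \widetilde\partial_1 \widehat T_n(x)|$ together with the pointwise independence of the six Gaussian random variables listed in \eqref{e:six}, I obtain a representation of $I_n(\eps)$ as the analogous sum of \eqref{proiezione q} with $\beta_{i_1},\beta_{j_1}$ replaced by $b_{i_1}(\eps), b_{j_1}(\eps)$ and no restriction on the total degree. Since at each $x$ the integrand of the sextuple $(i_1, j_1, i_2, i_3, j_2, j_3)$ belongs to the chaos of order $q := i_1 + j_1 + i_2 + i_3 + j_2 + j_3$ and $C_q$ is $L^2(\P)$-closed, grouping terms of fixed total degree $q$ yields the projection $I_n(\eps)[q]$. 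Passing to the limit requires the $L^2(\P)$-continuity of the projection onto $C_q$ together with Lemma \ref{lemma approx}: for each fixed $q$ only finitely many sextuples contribute, and on this finite set $b_k(\eps) \to \beta_k$, which gives \eqref{proiezione q}. The parity restriction follows from $\beta_{\mathrm{odd}} = 0$ combined with Lemma \ref{nullity}. Since these restrictions force $i_1 + j_1 + i_2 + i_3 + j_2 + j_3$ to be even for every surviving term (even $+$ even $+$ four terms of common parity), the vanishing \eqref{chaos dispari} of odd chaoses is immediate.

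The main obstacle is the Berry cancellation \eqref{chaos 2}. For $q=2$, the parity constraints force $i_2, i_3, j_2, j_3$ to be simultaneously even (common odd parity would push the sum to at least $4$), so exactly one of the six indices equals $2$ and the others vanish. Using \eqref{beta piccoli} and Lemma \ref{alpha piccoli} to evaluate the six coefficients, I would write
$$
I_n[2] = \frac{E_n}{2}\left[-\frac{1}{4\pi}\int_{\T}\bigl\{H_2(T_n) + H_2(\widehat T_n)\bigr\} dx + \frac{1}{8\pi}\int_{\T}\sum_{i=1}^2\bigl\{H_2(\widetilde\partial_i T_n) + H_2(\widetilde\partial_i \widehat T_n)\bigr\} dx\right].
$$
To conclude, I would invoke Green's identity on $\T$: since $\Delta T_n + E_n T_n = 0$,
$$
\int_{\T}\bigl\{(\partial_1 T_n)^2 + (\partial_2 T_n)^2\bigr\} dx = -\int_{\T} T_n \Delta T_n\, dx = E_n \int_{\T} T_n^2\, dx,
$$
which, after converting to the normalised derivatives $\widetilde\partial_i = \sqrt{2/E_n}\,\partial_i$ and using $H_2(z) = z^2 - 1$, becomes $\int_{\T}\{H_2(\widetilde\partial_1 T_n) + H_2(\widetilde\partial_2 T_n)\} dx = 2\int_{\T} H_2(T_n)\, dx$, with the identical identity for $\widehat T_n$. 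Substitution into the preceding display cancels the two contributions and yields $I_n[2] = 0$.
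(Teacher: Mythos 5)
Your proposal is correct and follows essentially the same route as the paper: approximate $I_n$ by $I_n(\eps)$, expand the rescaled indicator and the absolute Jacobian determinant in (multivariate) Hermite polynomials, pass to the limit using the convergence of the $\eps$-coefficients to $\beta_l$ together with Lemma \ref{lemma approx}, deduce the parity restrictions (hence the vanishing of odd chaoses) from $\beta_{\rm odd}=0$ and Lemma \ref{nullity}, and kill $I_n[2]$ via Green's identity and the eigenfunction equation. Your explicit evaluation of the $q=2$ coefficients and the cancellation computation agree with the paper's argument, so no gap.
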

\noindent\begin{proof} The main idea is to deduce the chaotic expansion for $I_n$ from the chaotic expansion for \paref{formula integrale1eps} and Lemma \ref{lemma approx}.
Let us first rewrite \paref{formula integrale1eps} as
\begin{equation}\label{areaformula}
I_n(\eps)={\frac{E_n}{8\eps^2}}\int_{\T} 1_{[-\eps,\eps]^2}({\bf T}_n(x))
\big|
\widetilde{\partial}_1T_n(x)\widetilde{\partial}_2\widehat{T}_n(x)
-
\widetilde{\partial}_1\widehat{T}_n(x)\widetilde{\partial}_2T_n(x)
\big|dx.
\end{equation}
We recall the chaos decomposition of the indicator function (see e.g. \cite[Lemma 3.4]{MPRW}):
\begin{equation*}
\frac{1}{2\varepsilon}{1}_{[-\varepsilon ,\varepsilon ]}(\cdot
)=\sum_{l=0}^{+\infty }\frac{1}{l!}\beta_l^{\varepsilon}\,H_{l}(\cdot ),
\end{equation*}
where, for $l \geq 0$
\begin{equation}\label{e:beta eps}
\beta_0^{\varepsilon} = \frac{1}{2\varepsilon} \int_{-\varepsilon}^{\varepsilon} \phi(t)\,dt,\qquad \beta_{2l+1}^\eps = 0,\qquad \beta^\eps_{2l+2}=  -\frac{1}{\varepsilon}
\phi \left (\varepsilon \right) H_{2l+1} \left (\varepsilon \right),
\end{equation}
and $\phi$ is still denoting the standard Gaussian density.
For the indicator function of $[-\eps,\eps]^2$ appearing in (\ref{areaformula}), we thus
have
\begin{equation}\label{delta0}
\frac{1}{4\varepsilon^2}1_{[-\eps,\eps]^2}(x,y) = \sum_{l=0}^\infty \sum_{q=0}^l \frac{\beta^\eps_{2q}\beta^\eps_{2l-2q}}{(2q)!(2l-2q)!}H_{2q}(x)H_{2l-2q}(y).
\end{equation}
The chaotic expansion for the absolute value of the Jacobian determinant appearing in \paref{areaformula} is, thanks to Lemma \ref{nullity},
\begin{equation}\label{determinant}
\begin{split}
&\big|
\widetilde{\partial}_1T_n(x)\widetilde{\partial}_2\widehat{T}_n(x)
-
\widetilde{\partial}_1\widehat{T}_n(x)\widetilde{\partial}_2T_n(x)
\big|\\
&=
\sum_{q\ge 0}\sum_{\substack{a+b+c+d=2q\\  (a,b,c,d\text{ the same parity}) }}^\infty \frac{\alpha_{a,b,c,d}}{a!b!c!d!}
H_a(\widetilde{\partial}_1 T_n(x))H_b(\widetilde{\partial}_2 T_n(x))H_c(\widetilde{\partial}_1  \widehat T_n(x))H_d(\widetilde{\partial}_2 \widehat T_n(x)),
\end{split}
\end{equation}
where $\alpha_{a,b,c,d}$ are given in \paref{e:alpha}. In particular, observe that Lemma \ref{nullity} ensures that the odd chaoses vanish in the chaotic expansion for the Jacobian.

 It hence follows from \paref{delta0} and \paref{determinant} that the chaotic expansion for $I_n(\eps)$ in \paref{areaformula} is (taking sums over even $i_1, j_1$  and $i_2, i_3, j_2, j_3$ with the same parity)
\begin{equation}\label{proiezione q eps}
\begin{split}
I_n(\eps)=&\frac{E_n}{2}\sum_{q\ge 0}\sum_{i_1+i_2+i_3+j_1+j_2+j_3=2q} \frac{\beta^\eps_{i_1}\beta^\eps_{j_1}}{i_1! j_1!}\frac{\alpha_{i_2 i_3 j_2 j_3}}{i_2! i_3! j_2! j_3!}\times\cr
&\times \int_{\mathbb T} H_{i_1}(T_n(x))H_{j_1}(\widehat T_n(x))H_{i_2}(\widetilde \partial_1 T_n(x))H_{i_3}(\widetilde \partial_2 T_n(x))H_{j_2}(\widetilde \partial_1 \widehat T_n(x))H_{j_3}(\widetilde \partial_2 \widehat T_n(x))\,dx.
\end{split}
\end{equation}
Noting that, as $\varepsilon\to 0$,
$$
\beta_{l}^\varepsilon \to \beta_l,
$$
where $\beta_l$ are given in \paref{e:beta} and using Lemma \ref{lemma approx}, we prove both \paref{chaos dispari} and \paref{proiezione q}.

\

\noindent Let us now prove \paref{chaos 2} that allows to conclude the proof.
Equation \paref{proiezione q} with $q=1$ together with Equation \paref{beta piccoli} and Lemma \ref{alpha piccoli}, imply that the projection of $I_n$ on the second Wiener chaos equals the quantity
\begin{eqnarray*}
I_n[2]&:=& 2\pi^2 n \beta_0\beta_2  \alpha_{0,0,0,0} \int_{\T} H_2(T_n(x)) dx
 +2\pi^2 n \beta_2\beta_0  \alpha_{0,0,0,0} \int_{\T} H_2(\widehat{T}_n(x)) dx \\
&& +2\pi^2 n \beta_0^2 \alpha_{2,0,0,0} \int_{\T} H_2(\widetilde{\partial}_1 T_n(x))dx
+  2\pi^2 n \beta_0^2 \alpha_{0,2,0,0} \int_{\T} H_2(\widetilde{\partial}_2 T_n(x)) dx \\
&&+ 2\pi^2 n \beta_0^2 \alpha_{0,0,2,0} \int_{\T} H_2(\widetilde{\partial}_1\widehat{T}_n(x))dx
+ 2\pi^2 n \beta_0^2 \alpha_{0,0,0,2} \int_{\T}  + H_2(\widetilde{\partial}_2\widehat{T}_n(x) )  dx \\
&= &\frac{\pi n}{2}  \left\{ \int_{\T} \big[H_2(\widetilde{\partial}_1 T_n(x)) +  H_2(\widetilde{\partial}_2 T_n(x)) + H_2(\widetilde{\partial}_1\widehat{T}_n(x)) + H_2(\widetilde{\partial}_2\widehat{T}_n(x) ) \big] dx \right.\\
&& \left. \quad\quad\quad- 2\int_{\T} \big[H_2(T_n(x)) + H_2(\widehat{T}_n(x))\big] dx\right\} .
\end{eqnarray*}
According to Green's first identity (see e.g. \cite[p. 44]{Lee}),


$$\int_{\T}\nabla v\cdot\nabla w\ dx=-\int_{\T}w\,\Delta v\ dx.$$


\noindent Using the facts that $H_2(t)=t^2-1$ {and that $T_n$ and $\widehat{T}_n$ are eigenfunctions of $\Delta$}, we eventually infer that
\begin{eqnarray*}
I_n[2]&= &\frac{1}{4\pi}  \int_{\T} \big[\|\nabla T_n(x)\|^2 +  \|\nabla \widehat{T}_n(x)\|^2  \big] dx
-n\pi \int_{\T} \big[T_n(x)^2 + \widehat{T}_n(x)^2\big] dx\\
&= &-\frac{1}{4\pi}  \int_{\T} \big[T_n(x)\, \Delta T_n(x) +   \widehat{T}_n(x)  \Delta  \widehat{T}_n(x)  \big] dx
-n\pi \int_{\T} \big[T_n(x)^2 + \widehat{T}_n(x)^2\big] dx\\
&= &n\pi \int_{\T} \big[T_n(x)^2 +   \widehat{T}_n(x)^2 \big] dx
-n\pi \int_{\T} \big[T_n(x)^2 + \widehat{T}_n(x)^2\big] dx=0.
\end{eqnarray*}

\end{proof}

\subsection{Proof of Part 1 of Theorem \ref{t:main}}\label{proof media sec}

According to Lemma \ref{alpha piccoli} and Equation \paref{beta piccoli}, for every $n\in S$ one has that
$$
I_n[0]=\mathbb{E}[I_n] = 2\pi^2 n\,\beta_0^2\,\alpha_{0,0,0,0} =\pi n =  \frac{E_n}{4\pi},
$$
thus yielding the desired conclusion.

\section{Investigation of the fourth chaotic components}\label{sec chaos 4}

In this section we shall investigate fourth chaotic components. In particular, we shall prove Proposition \ref{varianze chaos} and Proposition \ref{limite 4}.

\subsection{Preliminary results}

For $n\in S$, from \paref{proiezione q} with $q=2$ we deduce that
\begin{equation}\label{eq 4 chaos}
\begin{split}
I_n[4]&=\frac{E_n}{2}\sum_{i_1+i_2+i_3+j_1+j_2+j_3=4} \frac{\beta_{i_1}\beta_{j_1}}{i_1! j_1!}\frac{\alpha_{i_2 i_3 j_2 j_3}}{i_2! i_3! j_2! j_3!}\times\cr
&\times \int_{\mathbb T} H_{i_1}(T_n(x))H_{j_1}(\widehat T_n(x))H_{i_2}(\widetilde \partial_1 T_n(x))H_{i_3}(\widetilde \partial_2 T_n(x))H_{j_2}(\widetilde \partial_1 \widehat T_n(x))H_{j_3}(\widetilde \partial_2 \widehat T_n(x))\,dx.
\end{split}
\end{equation}
where the sum only considers integers $i_1,j_1$ even and $i_2, i_3, j_2, j_3$ with the same parity. In order to compute an expression for $I_n[4]$ that is more amenable to analysis,
let us introduce, for $n\in S$, the following family of random variables:
\begin{eqnarray*}
W(n) &=& \frac{1}{\sqrt{\mathcal N_n}}\sum_{\lambda\in \Lambda_n}(|a_\lambda|^2-1),\\ \widehat W(n) &= &\frac{1}{\sqrt{\mathcal N_n}}\sum_{\lambda\in \Lambda_n}(|\widehat a_\lambda|^2-1)\\
W_j(n)&=&\frac{1}{n\sqrt{\mathcal N_n}}\sum_{\lambda\in \Lambda_n}\lambda_j^2(|a_\lambda|^2-1),\\ \widehat W_j(n)&=&\frac{1}{n\sqrt{\mathcal N_n}}\sum_{\lambda\in \Lambda_n}\lambda_j^2(|\widehat a_\lambda|^2-1),\quad j=1,2 \\
W_{1,2}(n)&=&\frac{1}{n\sqrt{\mathcal N_n}}\sum_{\lambda\in \Lambda_n}\lambda_1\lambda_2\,|a_\lambda|^2,\\ \widehat W_{1,2}(n)&=&\frac{1}{n\sqrt{\mathcal N_n}}\sum_{\lambda\in \Lambda_n}\lambda_1\lambda_2\,|\widehat a_\lambda|^2, \\
M(n)&=&\frac{1}{\sqrt{\mathcal N_n}} \sum_{\lambda\in \Lambda_n} a_\lambda \overline{\widehat{a}_\lambda} \\
M_j(n) &=& \frac{i}{\sqrt{n\mathcal N_n}} \sum_{\lambda\in \Lambda_n} \lambda_j a_\lambda \overline{\widehat{a}_\lambda},\quad j=1,2 \\
M_{\ell,j}(n)&=&\frac{1}{n\sqrt{\mathcal N_n}}\sum_{\lambda\in \Lambda_n} \lambda_\ell \lambda_j  \ a_\lambda \overline{\widehat{a}_\lambda}\qquad j,\ell=1,2.
\end{eqnarray*}
Note that
$$
W_{1,2}(n)=\frac{1}{n\sqrt{\mathcal N_n}}\sum_{\lambda\in \Lambda_n}\lambda_1\lambda_2\,(|a_\lambda|^2-1),\quad \mbox{and}\quad \widehat W_{1,2}(n)=\frac{1}{n\sqrt{\mathcal N_n}}\sum_{\lambda\in \Lambda_n}\lambda_1\lambda_2\,(|\widehat a_\lambda|^2-1),
$$
since $\sum_{\lambda\in \Lambda_n} \lambda_1\lambda_2 = 0$, and also that $M_j$ is real-valued for $j=1,2$.

Now, let us express each summand appearing on the {}{right-hand side} of \paref{eq 4 chaos} in terms of $W{}{(n)}$, $W_1{}{(n)}$, $W_2{}{(n)}$, $W_{1,2}{}{(n)}$, $\widehat W{}{(n)}$, $\widehat W_1{}{(n)}$, $\widehat W_2{}{(n)}$, $\widehat W_{1,2}{}{(n)}$, $M{}{(n)}$, $M_1{}{(n)}$, $M_2{}{(n)}$, $M_{1,1}{}{(n)}$, $M_{2,2}{}{(n)}$ and/or $M_{1,2}{}{(n)}$. The proof of the following result will be given in Section \ref{appendix}. In what follows, the symbol $o_{\P}(1)$ indicates a sequence of random variables converging to zero in probability. In view of Remark \ref{r:postmain}-7, we will focus on sequences $\{n_j\}$ such that $\widehat{\mu}_{n_j}(4)$ converges to some number $\eta\in [-1,1]$.

\begin{lemma}\label{lemma infinito}
Let $\lbrace n_j\rbrace\subset S$ be such that $\mathcal N_{n_j}\to +\infty$ and $\widehat{\mu}_{n_j}(4)\to \eta\in[-1,1]$, then
\begin{enumerate}
\item[(i)] $\int_{\T} H_4(T_{n_j}(x))\,dx=\frac{3}{\mathcal{N}_{n_j}}\big(W{}{(n_j)}^2-2+o_\mathbb{P}(1)\big)$;
\item[(ii)] $\int_{\T} H_4(\widetilde{\partial}_k T_{n_j}(x))\,dx=\frac{3}{\mathcal{N}_{n_j}}\big(4W_k{}{(n_j)}^2-3-\eta+o_\mathbb{P}(1)\big)$, $k=1,2$;
\item[(iii)] $\int_{\T} H_2(T_{n_j}(x))\big(H_2(\widetilde{\partial}_1 T_{n_j})(x))+H_2(\widetilde{\partial}_2 T_{n_j})(x))\big)\,dx=\frac{2}{\mathcal{N}_{n_j}}\big(W{}{(n_j)}^2-2+o_\mathbb{P}(1)\big)$;
\item[(iv)] $\int_{\T} H_2(\widetilde{\partial}_1 T_{n_j}(x))H_2(\widetilde{\partial}_2 T_{n_j}(x))\,dx=\frac{1}{\mathcal{N}_{n_j}}\big(4W_{1}{}{(n_j)}W_2{}{(n_j)}+8W_{1,2}{}{(n_j)}^2-3+3\eta+o_\mathbb{P}(1)\big)$;
\item[(v)] $\int_{\T} H_2(T_{n_j})(x))H_2(\widehat T_{n_j})(x))\,dx=\frac{1}{\mathcal{N}_{n_j}}\big(W{}{(n_j)}\widehat{W}{}{(n_j)}+2M{}{(n_j)}^2-2+o_\mathbb{P}(1)\big)$;
\item[(vi)] $\int_{\T} H_2(T_{n_j})(x))\big(H_2(\widetilde{\partial}_1 \widehat T_{n_j}(x))+H_2(\widetilde{\partial}_2 \widehat T_{n_j})(x))\big)\,dx=\frac{2}{\mathcal{N}_{n_j}}\big(W{}{(n_j)}\widehat{W}{}{(n_j)}+M_1{}{(n_j)}^2+M_2{}{(n_j)}^2-1+o_\mathbb{P}(1)\big)$;
\item[(vii)] $\int_{\T} H_2(\widetilde{\partial}_\ell T_{n_j}(x))H_2(\widetilde{\partial}_k \widehat T_{n_j}(x))\,dx=\frac{1}{\mathcal{N}_{n_j}}\big(4W_\ell{}{(n_j)} \widehat{W}_j{}{(n_j)}+8M_{\ell,k}{}{(n_j)}^2-(3+\eta){\bf 1}_{\{\ell= k\}}-(1-\eta){\bf 1}_{\{\ell\neq k\}}+o_\mathbb{P}(1)\big)$, $\ell,k=1,2$;
\item[(viii)] $\int_{\T} \widetilde{\partial}_1 T_{n_j}(x)\widetilde{\partial}_2 T_{n_j}(x)\widetilde{\partial}_1 \widehat T_{n_j}(x)\widetilde{\partial}_2 \widehat T_{n_j}(x)\,dx=\frac{1}{\mathcal{N}_{n_j}}\big(4W_{1,2}{}{(n_j)}\widehat{W}_{1,2}{}{(n_j)}+4M_{1,1}{}{(n_j)}M_{2,2}{}{(n_j)}+4M_{1,2}{}{(n_j)}^2-1+\eta+o_\mathbb{P}(1)\big)$.
\end{enumerate}
\end{lemma}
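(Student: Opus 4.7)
I would tackle all eight identities via a common scheme. Substituting the Fourier representations of $T_n, \widehat T_n$ and their normalised derivatives into each left-hand side, expanding the Hermite polynomials through $H_2(t)=t^2-1$ and $H_4(t)=t^4-6t^2+3$, and integrating termwise over $\T$ reduces the problem to evaluating sums of the form
$$\frac{C}{\mathcal{N}_n^2}\sum_{(\mu_1,\ldots,\mu_4)\in S_4(n)} P(\mu_1,\mu_2,\mu_3,\mu_4)\, Z_{\mu_1}\overline{Z_{\mu_2}}\,Y_{\mu_3}\overline{Y_{\mu_4}},$$
where $Z, Y \in \{a,\widehat a\}$, $P$ is a degree-$0$, $2$ or $4$ monomial in the components of the $\mu_i$'s (normalised by $1/n$ per pair), the constraint $\mu_1-\mu_2+\mu_3-\mu_4=0$ comes from integration against the torus followed by the sign substitution $\mu_{\rm even}\mapsto-\mu_{\rm even}$, and the conjugations arise from $a_{-\lambda}=\overline{a_\lambda}$, $\widehat a_{-\lambda}=\overline{\widehat a_\lambda}$. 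The lower-order pieces from $-6H_2$ and $+3$ in $H_4$ (and the $-1$ in $H_2$) are expanded and integrated in the same way.

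Next, Lemma \ref{l:zygmund} decomposes the $S_4(n)$-sum into the four types (a)--(d). Type (a), with $\mu_1=\mu_2,\mu_3=\mu_4$, factorises into a product of single-$\Lambda_n$ sums, each expressible through one of $\mathcal{N}_n+\sqrt{\mathcal{N}_n}W(n)$, $n\mathcal{N}_n/2+n\sqrt{\mathcal{N}_n}W_j(n)$, $n\sqrt{\mathcal{N}_n}W_{1,2}(n)$ or the $\widehat{}\,$-counterparts; this produces the quadratic combinations $W\widehat W, W_\ell \widehat W_k$, etc. Types (c) and (d) contribute symmetrically and, after unrestricting the sum over $\mu_1,\mu_2$ with diagonal corrections at $\mu_1=\pm\mu_2$, reduce to $2\left|\sum_\mu \widetilde P(\mu) Z_\mu\overline{Y_\mu}\right|^2$, yielding the squares $2\mathcal{N}_n M(n)^2, 2n\mathcal{N}_n M_j(n)^2, 2n\mathcal{N}_n M_{\ell,k}(n)^2$ in the mixed items (v)--(viii) (using that $M(n), M_j(n), M_{\ell,k}(n)$ are all real-valued). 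Type (b), the ``skinny diagonal'' $\mu_1=-\mu_2=-\mu_3=\mu_4$, contributes $\sum_\mu Q(\mu)$ with integrand either $|a_\mu|^4$ (in items (i)--(iv), $\E = 2$, producing a deterministic correction) or $(a_\mu\overline{\widehat a_\mu})^2$ (in items (v)--(viii), $\E = 0$, absorbed into the $o_\P(1)$ remainder). The diagonal corrections of type (c)+(d) at $\mu_1=\mu_2$ contribute $\sum w(\mu)|a_\mu|^2|\widehat a_\mu|^2$, whose expectation $\sum w(\mu)$ enters the deterministic constant, while those at $\mu_1=-\mu_2$ give $(a_\mu\overline{\widehat a_\mu})^2$-type sums with mean zero.

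The $\eta$-dependence of the stated constants in (ii), (iv), (vii) and (viii) then arises from the two arithmetic identities
$$\sum_{\lambda\in\Lambda_n}\lambda_1^4/n^2 = \sum_{\lambda\in\Lambda_n}\lambda_2^4/n^2 = \frac{\mathcal{N}_n(3+\widehat\mu_n(4))}{8}, \quad \sum_{\lambda\in\Lambda_n}\lambda_1^2\lambda_2^2/n^2 = \frac{\mathcal{N}_n(1-\widehat\mu_n(4))}{8},$$
which follow from $\lambda/\sqrt n = (\cos\theta_\lambda,\sin\theta_\lambda)$, the invariance of $\Lambda_n$ under $\lambda\mapsto i\lambda$ (killing $\sum_\lambda \cos 2\theta_\lambda$), and the definition $\widehat\mu_n(4) = \mathcal{N}_n^{-1}\sum_\lambda \cos 4\theta_\lambda$; passage to the subsequence along which $\widehat\mu_n(4)\to \eta$ produces the stated constants. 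The $o_\P(1)$ remainders collect the mean-zero diagonal fluctuations of shape $\sum w(\mu)(a_\mu\overline{\widehat a_\mu})^2$, $\sum w(\mu)(|a_\mu|^4-2)$, $\sum w(\mu)(|a_\mu|^2|\widehat a_\mu|^2-1)$, all of $L^2(\P)$-norm $O(\sqrt{\mathcal{N}_n})$ (hence $o_\P(1)$ after division by $\mathcal{N}_n$), together with subleading cross terms of order $W(n)/\sqrt{\mathcal{N}_n}$; Chebyshev's inequality, combined with the moment formulas $\E|a_\mu|^{2k}=k!$ and the independence of the $a_\mu,\widehat a_\mu$ (modulo the reality constraints), closes the argument. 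I expect the most delicate bookkeeping to be item (vii), where the dichotomy $\ell=k$ vs.\ $\ell\neq k$ selects between the two arithmetic sums above and produces the distinct constants $-(3+\eta)$ and $-(1-\eta)$.
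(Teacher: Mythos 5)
Your plan is correct and follows essentially the same route as the paper's proof: substitute the Fourier expansions, integrate to reduce to sums over the correlation set $S_4(n)$, use the Zygmund classification (a)--(d) to split into the $W$/$M$-type quadratic terms plus diagonal corrections, and dispose of the residual diagonal sums (e.g. $\mathcal{N}_n^{-1}\sum_\lambda \lambda_j^4|a_\lambda|^4/n^2$, $\mathcal{N}_n^{-1}\sum_\lambda a_\lambda^2\overline{\widehat a_\lambda}^2$) by the law of large numbers together with the arithmetic identities $\sum_\lambda\lambda_j^4=\tfrac{n^2\mathcal{N}_n}{8}(3+\widehat\mu_n(4))$ and $\sum_\lambda\lambda_1^2\lambda_2^2=\tfrac{n^2\mathcal{N}_n}{8}(1-\widehat\mu_n(4))$. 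The paper carries this out item by item by direct computation, but the mechanism (including the treatment of the $\mu_1=\pm\mu_2$ corrections and of the mean-zero $(a_\mu\overline{\widehat a_\mu})^2$ terms) is exactly the one you describe.
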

We are now able to give an explicit expression for $I_n[4]$ in \paref{eq 4 chaos}.
\begin{lemma}\label{carino}
Let $\lbrace n_j\rbrace\subset S$ such that $\mathcal N_{n_j}\to +\infty$ and $\widehat{\mu}_{n_j}(4)\to \eta\in [-1,1]$, then
\begin{equation}\label{in4}
\begin{split}
I_{n_j}[4]=
\frac{n_j\pi}{8\,\mathcal N_{n_j}}\Big(&
\frac12W{}{(n_j)}^2+\frac12\widehat{W}{}{(n_j)}^2-3W{}{(n_j)}\widehat{W}{}{(n_j)}- W_1{}{(n_j)}^2 -  W_2{}{(n_j)}^2  -\widehat{W}_1{}{(n_j)}^2\\
& - \widehat{W}_2{}{(n_j)}^2
 +6W_1{}{(n_j)}\widehat{W}_2{}{(n_j)}+6\widehat{W}_1{}{(n_j)}W_2{}{(n_j)} -2W_{1,2}{}{(n_j)}^2
-2\widehat{W}_{1,2}{}{(n_j)}^2 \\
&- 12 W_{1,2}{}{(n_j)}\widehat{W}_{1,2}{}{(n_j)}  - 4M_1{}{(n_j)}^2 -4M_2{}{(n_j)}^2 +4M{}{(n_j)}^2-2M_{1,1}{}{(n_j)}^2\\
&-2M_{2,2}{}{(n_j)}^2
-12M_{1,1}{}{(n_j)}M_{2,2}{}{(n_j)}+8M_{1,2}{}{(n_j)}^2+4+o_\P(1)
\Big).
\end{split}
\end{equation}
\end{lemma}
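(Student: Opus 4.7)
The starting point is the identity \paref{eq 4 chaos}, which expresses $I_{n_j}[4]$ as a linear combination indexed by all 6-tuples $(i_1,j_1,i_2,i_3,j_2,j_3)$ with $i_1+j_1+i_2+i_3+j_2+j_3=4$, subject to the parity restriction that $i_1,j_1$ are even and that $i_2,i_3,j_2,j_3$ share a common parity. The plan is entirely computational: enumerate all admissible tuples, plug in the explicit values of the coefficients $\beta_{i_1}\beta_{j_1}/(i_1!j_1!)$ and $\alpha_{i_2 i_3 j_2 j_3}/(i_2!i_3!j_2!j_3!)$ furnished by \paref{beta piccoli} and Lemma \ref{alpha piccoli}, and substitute the asymptotic identities of Lemma \ref{lemma infinito} for each resulting integral. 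The relation $E_{n_j}=4\pi^2 n_j$ will allow us to factor the common prefactor $n_j\pi/(8\mathcal N_{n_j})$ appearing on the right-hand side of \paref{in4}.

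First I would partition the admissible tuples into three families. \textbf{Family A:} $(i_2,i_3,j_2,j_3)=(0,0,0,0)$, so that $(i_1,j_1)\in\{(4,0),(0,4),(2,2)\}$; these yield integrals of $H_4(T_{n_j})$, $H_4(\widehat T_{n_j})$, and $H_2(T_{n_j})H_2(\widehat T_{n_j})$, treated via Lemma \ref{lemma infinito}(i) and (v). \textbf{Family B:} $(i_1,j_1)=(0,0)$ and the remaining four indices all even; these split into four tuples with a single entry equal to $4$ (handled via (ii)) and six tuples with two entries equal to $2$ (handled via (iv) and (vii), together with its $\widehat\partial$-analogues obtained by switching $T_n\leftrightarrow\widehat T_n$), plus the unique all-odd tuple $(0,0,1,1,1,1)$ producing $\alpha_{1,1,1,1}=-3/8$ times the integral in (viii). \textbf{Family C:} exactly one of $i_1,j_1$ equals $2$ and exactly one of $i_2,i_3,j_2,j_3$ equals $2$; this produces eight tuples, each of which reduces, through Lemma \ref{lemma infinito}(iii) or (vi), to the integrals $\int H_2(T_{n_j})H_2(\widetilde\partial_k T_{n_j})$ or $\int H_2(T_{n_j})H_2(\widetilde\partial_k \widehat T_{n_j})$ (and the $T_{n_j}\leftrightarrow\widehat T_{n_j}$ swap).

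For each tuple, I would record the numerical factor with sign: e.g. in Family A the tuple $(4,0,0,0,0,0)$ contributes the prefactor $\tfrac{\beta_0\beta_4}{4!}\alpha_{0,0,0,0}=\tfrac{1}{16\pi}$ multiplied by $\tfrac{E_{n_j}}{2}\cdot\tfrac{3}{\mathcal N_{n_j}}(W^2-2)$, which equals $\tfrac{n_j\pi}{8\mathcal N_{n_j}}\cdot 3(W^2-2)$; analogous one-line calculations are produced for each of the remaining tuples. I would then sum all these contributions column by column, grouping them according to the generating random variable (the $W^2$, $\widehat W^2$, $W\widehat W$, $W_k^2$, $W_k\widehat W_\ell$, $W_{1,2}^2$, $\widehat W_{1,2}^2$, $W_{1,2}\widehat W_{1,2}$, $M^2$, $M_k^2$, $M_{k,\ell}^2$, $M_{1,1}M_{2,2}$ blocks) and a scalar constant. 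A crucial symmetry to exploit is the invariance of both the coefficient table and of Lemma \ref{lemma infinito} under the simultaneous exchanges $T_n\leftrightarrow\widehat T_n$ (which sends $W\leftrightarrow\widehat W$, $W_k\leftrightarrow\widehat W_k$, $W_{1,2}\leftrightarrow \widehat W_{1,2}$, and leaves $M,M_k,M_{k,\ell}$ unchanged) and $\widetilde\partial_1\leftrightarrow\widetilde\partial_2$; this halves the bookkeeping in Families B and C and also serves as a consistency check on the signs.

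The main obstacle will not be any single computation but the sheer bookkeeping: roughly thirty tuples produce the fourteen distinct quadratic monomials in \paref{in4}, and an error in any one of the coefficients $\alpha$ or in any one of the cases of Lemma \ref{lemma infinito} would spoil the final identity. Two places deserve particular vigilance. First, the tuple $(0,0,1,1,1,1)$ with $\alpha_{1,1,1,1}=-3/8$ is the only source of the terms $W_{1,2}\widehat W_{1,2}$, $M_{1,1}M_{2,2}$ and $M_{1,2}^2$, so the coefficients $-12$, $-12$ and $+8$ in \paref{in4} come entirely from the constants produced in Lemma \ref{lemma infinito}(viii) and must be tracked carefully. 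Second, the constant $+4$ on the last line is the algebraic sum of all the $-2$, $-(3\pm\eta)$, $-(1\mp\eta)$ deterministic remainders appearing in the eight cases of Lemma \ref{lemma infinito}, weighted by the corresponding numerical prefactors; a full verification that the coefficients of $\eta$ cancel and that the scalar remainder collapses to $+4$ provides a decisive consistency check that all prefactors have been computed correctly. Once all columns are added, one reads off exactly the right-hand side of \paref{in4}, and the lemma is proved.
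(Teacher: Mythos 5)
Your plan is correct and follows essentially the same route as the paper: expand \eqref{eq 4 chaos} explicitly using \eqref{beta piccoli} and Lemma \ref{alpha piccoli}, substitute the identities (i)--(viii) of Lemma \ref{lemma infinito} (together with their hatted analogues), and collect terms to obtain the common prefactor $n_j\pi/(8\mathcal N_{n_j})$ and the quadratic form in \eqref{in4}. The only ingredient you leave implicit is the elementary identity $W_1(n_j)+W_2(n_j)=W(n_j)$ (and its hatted version), which the paper invokes in the final regrouping to reach the exact stated form.
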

\noindent\begin{proof}
From Lemma \ref{alpha piccoli} and \paref{eq 4 chaos}, we find that

\begin{eqnarray}
I_n[4]&=&\frac{n\pi}{64}\,\Big(8\int_{\T} H_4(T_n(x))\,dx -8\int_{\T} H_2(T_n(x))H_2(\widetilde{\partial}_1 T_n(x))\,dx\notag\\
&&-8\int_{\T} H_2(T_n(x))H_2(\widetilde{\partial}_2 T_n(x))\,dx- 2\int_{\T} H_2(\widetilde{\partial}_1 T_n(x))H_2(\widetilde{\partial}_2 T_n(x))\,dx\notag\\
&&-\int_{\T} H_4(\widetilde{\partial}_1 T_n(x))\,dx
-\int_{\T} H_4(\widetilde{\partial}_2 T_n(x))\,dx \notag\\
&&+8\int_{\T} H_4(\widehat T_n(x))\,dx -8\int_{\T} H_2(\widehat T_n(x))H_2(\widetilde{\partial}_1 \widehat T_n(x))\,dx\notag
\end{eqnarray}
\begin{eqnarray}
&&-8\int _{\T}H_2(\widehat T_n(x))H_2(\widetilde{\partial}_2 \widehat T_n(x))\,dx- 2\int_{\T} H_2(\widetilde{\partial}_1\widehat T_n(x))H_2(\widetilde{\partial}_2 \widehat T_n(x))\,dx\notag\\
&&-\int_{\T} H_4(\widetilde{\partial}_1\widehat  T_n(x))\,dx -\int_{\T} H_4(\widetilde{\partial}_2 \widehat T_n(x))\,dx \notag\\
&&+16\int_{\T} H_2(T_n(x))H_2(\widehat T_n(x))\,dx
-8\int_{\T} H_2(T_n(x))\big(H_2(\widetilde{\partial}_1 \widehat T_n(x))+H_2(\widetilde{\partial}_2 \widehat T_n(x))\big)\,dx\notag\\
&&
-8\int_{\T} H_2(\widehat T_n(x))\big(H_2(\widetilde{\partial}_1 T_n(x))+H_2(\widetilde{\partial}_2 T_n(x))\big)dx\notag\\
&&
-2\int_{\T} H_2(\widetilde{\partial}_1 T_n(x))H_2(\widetilde{\partial}_1 \widehat T_n(x))\,dx
-2\int_{\T} H_2(\widetilde{\partial}_2 T_n(x))H_2(\widetilde{\partial}_2 \widehat T_n(x))\,dx\notag\\
&&+10\int_{\T} H_2(\widetilde{\partial}_1 T_n(x))H_2(\widetilde{\partial}_2 \widehat T_n(x))\,dx
+10\int_{\T} H_2(\widetilde{\partial}_2 T_n(x))H_2(\widetilde{\partial}_1 \widehat T_n(x))\,dx\notag\\
&&-24\int_{\T} \widetilde{\partial}_1 T_n(x)\widetilde{\partial}_2 T_n(x)\widetilde{\partial}_1 \widehat T_n(x)\widetilde{\partial}_2 \widehat T_n(x)\,dx
\Big ).\label{eq infinita}
\end{eqnarray}
Using the previous identities (i)-(viii) in Lemma \ref{lemma infinito} in \paref{eq infinita}, and also using that
$W_1{}{(n_j)} + W_2{}{(n_j)} =W{}{(n_j)}$ and
$\widehat W_1{}{(n_j)} + \widehat W_2{}{(n_j)} = \widehat W{}{(n_j)}$, one concludes the proof.

\end{proof}

\subsection{Proofs of Proposition \ref{varianze chaos} and Proposition \ref{limite 4}}

Let us first study the asymptotic distribution of the centered random vector, defined for $n\in S$ as follows
\begin{equation*}
\begin{split}
{\bf W}(n):=(&W(n),W_1(n), W_2(n), W_{1,2}(n), \widehat W(n), \widehat W_1(n), \widehat W_2(n), \widehat W_{1,2}(n),\cr
&M(n),M_1(n),M_2(n),M_{1,1}(n),M_{2,2}(n),M_{1,2}(n))\in \mathbb R^{14}.
\end{split}
\end{equation*}

\begin{lemma}\label{CLT}
Let $\lbrace n_j\rbrace\subset S$ be such that $\mathcal N_{n_j}\to +\infty$ and $\widehat{\mu}_{n_j}(4)\to \eta\in [-1,1]$. Then, as $\mathcal N_{n_j}\to \infty$,
$$
{\bf W}(n_j) \stackrel{\rm law}{\Longrightarrow} {\bf G},
$$
where ${\bf G}=(G_1,\ldots,G_{14})$ denotes a Gaussian real centered vector with
covariance matrix given by
\begin{equation}\label{matrix M}
{\bf M}(\eta)=\left(\begin{matrix}
&{\bf A}{}{(\eta)} &0&0\\
&0&{\bf A}{}{(\eta)} &0\\
&0&0 &{\bf B}{}{(\eta)} \\
\end{matrix}\ \ \right ),
\end{equation}
where
$$
{\bf A}(\eta):=\left(\begin{matrix}
&2 &1 &1 &0 \\
&1 &\frac{3+\eta}{4} &\frac{1-\eta}{4} &0\\
&1  &\frac{1-\eta}{4} &\frac{3+\eta}{4} &0 \\
&0 &0 &0 &\frac{1-\eta}{4} \\
\end{matrix}\ \ \right ),
$$
and
$$
{\bf B}(\eta):=\left(\begin{matrix} &1 &0 &0 &\frac12 &\frac12 &0\\
&0 &\frac12 &0 &0 &0 &0 \\
&0 &0 &\frac12  &0 &0 &0\\
&\frac12 &0 &0 &\frac{3+\eta}{8} & \frac{1-\eta}{8} &0\\
&\frac12 &0 &0 &\frac{1-\eta}{8} &\frac{3+\eta}{8} &0\\
&0 &0 &0 &0 &0  &\frac{1-\eta}{8}
\end{matrix}\ \ \right ).
$$
\end{lemma}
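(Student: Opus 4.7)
My plan is to prove Lemma \ref{CLT} in two stages: (i) compute the covariance matrix of ${\bf W}(n_j)$ and show that it converges to ${\bf M}(\eta)$, and (ii) upgrade this to joint convergence in law by exploiting the fact that all fourteen coordinates of ${\bf W}(n)$ live in the second Wiener chaos $C_2$ generated by ${\bf A}(n)\cup \widehat{\bf A}(n)$.

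\textbf{Reduction to covariance convergence.} Each coordinate of ${\bf W}(n)$ is a $1/\sqrt{\mathcal{N}_n}$-normalized sum over $\lambda\in\Lambda_n$ of centered quadratic or bilinear polynomials in $\{a_\lambda,\widehat{a}_\lambda\}$, hence an element of the second Wiener chaos associated with ${\bf A}$. By the multivariate fourth moment theorem of Peccati--Tudor (see e.g.\ \cite[Ch.~6]{NP}), once the covariance matrix of ${\bf W}(n_j)$ is shown to converge to that of a centered Gaussian vector, joint convergence in law follows as soon as each coordinate is asymptotically Gaussian. For a second-chaos variable this reduces to the vanishing of the $L^2$ contraction norm $\|f_{n_j}\otimes_1 f_{n_j}\|$ of its defining kernel, which in each case is a sum over matched quadruples in $\Lambda_n^4$ of size $O(\mathcal{N}_n^{-1})$ and thus tends to zero. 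Equivalently, via the Cramér--Wold device one can pair the frequencies $\lambda\leftrightarrow -\lambda$ to express any linear combination of the coordinates as a sum of $\mathcal{N}_n/2$ independent, uniformly bounded-in-$L^p$ summands with bounded variances, and invoke the Lindeberg--Feller theorem.

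\textbf{Block-diagonal structure.} The block form of ${\bf M}(\eta)$ follows immediately from the stochastic independence of ${\bf A}(n)$ and $\widehat{\bf A}(n)$ (Proposition \ref{p:realim}): the first block captures covariances among $(W,W_1,W_2,W_{1,2})$, which involves only ${\bf A}(n)$; the second block coincides with the first since ${\bf A}(n)\overset{\rm law}{=}\widehat{\bf A}(n)$; and the third block ${\bf B}(\eta)$ groups the mixed coordinates $(M,M_1,M_2,M_{1,1},M_{2,2},M_{1,2})$, whose covariances with the first two blocks vanish since every such coordinate is mean-zero linear in $\widehat{a}_\lambda$ once conditioned on ${\bf A}(n)$.

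\textbf{Covariance entries.} The individual entries of ${\bf A}(\eta)$ and ${\bf B}(\eta)$ are obtained by a direct Isserlis/Wick expansion, using the conjugate symmetry $a_\lambda=\overline{a_{-\lambda}}$ and the fact that the real and imaginary parts of $a_\lambda$ are independent with variance $1/2$. After Wick contractions, every entry reduces to a weighted sum of the form
$$
\frac{1}{\mathcal{N}_n\,n^{a+b}}\sum_{\lambda\in\Lambda_n}\lambda_1^{2a}\lambda_2^{2b},\qquad a+b\leq 2.
$$
Writing $\lambda/\sqrt{n}=(\cos\theta_\lambda,\sin\theta_\lambda)$ and applying the trigonometric identities
$\cos^4\theta=(3+4\cos 2\theta+\cos 4\theta)/8$, $\cos^2\theta\sin^2\theta=(1-\cos 4\theta)/8$, together with $\widehat{\mu}_n(0)=1$, $\widehat{\mu}_n(2)=0$ (the latter being forced by the $z\mapsto iz$ invariance of $\mu_n$) and the hypothesis $\widehat{\mu}_{n_j}(4)\to\eta$, turns each such sum into an affine function of $\eta$. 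A systematic matching with the fourteen coordinates of ${\bf W}(n)$ yields precisely the coefficients $(3+\eta)/4$, $(1-\eta)/4$, $(3+\eta)/8$ and $(1-\eta)/8$ appearing in ${\bf A}(\eta)$ and ${\bf B}(\eta)$.

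\textbf{Main obstacle.} The CLT upgrade is essentially automatic for second-chaos random variables, so the real work sits in the bookkeeping of the covariances. The most delicate points are disentangling the surviving off-diagonal entries in ${\bf B}(\eta)$ --- in particular the entries coupling $M(n)$ to $M_{1,1}(n)$ and $M_{2,2}(n)$, and those among $M_{1,1}, M_{2,2}, M_{1,2}$ --- and correctly handling the factor of $i$ in the definition of $M_j(n)$ in conjunction with the conjugation symmetry $a_\lambda=\overline{a_{-\lambda}}$, so that real and imaginary contributions are properly separated. Once this arithmetic is done carefully, the appearance of $\eta$ through $\widehat{\mu}_n(4)$ is forced by the trigonometric identities above, and the remaining analysis is routine.
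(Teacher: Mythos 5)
Your proposal is correct and follows essentially the same route as the paper: explicit Wick/moment computations of the covariance matrix (with the $\eta$-dependence entering through $\sum_{\lambda\in\Lambda_n}\lambda_1^2\lambda_2^2=\tfrac{n^2\mathcal N_n}{8}\big(1-\widehat{\mu}_n(4)\big)$), followed by the Peccati--Tudor multivariate fourth moment theorem, using the second-chaos structure to get marginal asymptotic normality. The paper checks the marginals by bounding the fourth cumulant of each component directly (exploiting the independence of the summands indexed by $\Lambda_n^+$), which is equivalent to your vanishing-contraction / Lindeberg--Feller verification.
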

\noindent\begin{proof}
First, for reasons related to independence it is easy to check that the covariance matrix of  ${\bf W}(n)$ takes the form
\begin{equation}\label{sigma grande}
\Sigma_{{}{n}} =\left(\begin{matrix}
&{\bf A}_n &0&0\\
&0&{\bf A}_n&0\\
&0&0 &{\bf B}_n\\
\end{matrix}\ \ \right ),
\end{equation}
where ${\bf A}_n$ and ${\bf B}_n$ denote the covariance matrices of $(W(n),W_1(n),W_2(n),W_{1,2}(n))$ and
$(M(n),M_1(n),M_2(n),M_{1,1}(n),M_{2,2}(n),M_{1,2}(n))$ respectively. Let us first compute ${\bf A}_n$.
Since $\E[(|a_\lambda|^2-1)(|a_{\lambda'}|^2-1)]=1$ if $\lambda=\pm \lambda'$ and is zero otherwise, one has
$$
\E(W{}{(n)}^2) = \frac1{\mathcal{N}_n}\sum_{\lambda,\lambda'\in\Lambda_n}\E[(|a_\lambda|^2-1)(|a_{\lambda'}|^2-1)]
=2.
$$
Similarly,
$$
\E(W{}{(n)}W_j{}{(n)}) = \frac1{n\mathcal{N}_n}\sum_{\lambda,\lambda'\in\Lambda_n}\lambda_j^2\E[(|a_\lambda|^2-1)(|a_{\lambda'}|^2-1)]
=\frac2{n\mathcal{N}_n}\sum_{\lambda\in\Lambda_n}\lambda_j^2=1,
$$
whereas
$$
\E(W{}{(n)}W_{1,2}{}{(n)}) = \frac1{n\mathcal{N}_n}\sum_{\lambda,\lambda'\in\Lambda_n}\lambda_1\lambda_2\E[(|a_\lambda|^2-1)(|a_{\lambda'}|^2-1)]
=\frac2{n\mathcal{N}_n}\sum_{\lambda\in\Lambda_n}\lambda_1\lambda_2=0.
$$
We also have
$$
\E(W_j{}{(n)}^2) = \frac1{n^2\mathcal{N}_n}\sum_{\lambda,\lambda'\in\Lambda_n}\lambda_j^2{\lambda'_j}^2\E[(|a_\lambda|^2-1)(|a_{\lambda'}|^2-1)]
=\frac2{n^2\mathcal{N}_n}\sum_{\lambda\in\Lambda_n}\lambda_j^4.$$
To express $\E(W_j{}{(n)}^2)$ in a more suitable way, let us rely on $\widehat{\mu}_n(4)$:
\begin{eqnarray*}
\widehat{\mu}_n(4)&=&\int_{\mathcal{S}^1}z^4d\mu_n(z)=\frac{1}{n^2\mathcal{N}_n}\sum_{\lambda\in \Lambda_n}
(\lambda_1+i\lambda_2)^4
=\frac{1}{n^2\mathcal{N}_n}\sum_{\lambda\in \Lambda_n}
(\lambda_1^4-6\lambda_1^2\lambda_2^2+\lambda_2^4)\\
&=&\frac{1}{n^2\mathcal{N}_n}\sum_{\lambda\in \Lambda_n}(\lambda_1^2+\lambda_2^2)^2 -
\frac{8}{n^2\mathcal{N}_n}\sum_{\lambda\in \Lambda_n}\lambda_1^2\lambda_2^2
=1 -
\frac{8}{n^2\mathcal{N}_n}\sum_{\lambda\in \Lambda_n}\lambda_1^2\lambda_2^2.
\end{eqnarray*}
As a result,
$$
\sum_{\lambda\in \Lambda_n}\lambda_1^2\lambda_2^2 = \frac{n^2\mathcal{N}_n}{8}(1-\widehat{\mu}_n(4)),
$$
leading to
$$
\E(W_j{}{(n)}^2)  = \frac1{n^2\mathcal{N}_n}\sum_{\lambda\in\Lambda_n}(\lambda_1^4+\lambda_2^4)
=\widehat{\mu}_n(4)+\frac{6}{n^2\mathcal{N}_n}\sum_{\lambda\in \Lambda_n}\lambda_1^2\lambda_2^2
=\frac14(3+\widehat{\mu}_n(4)).
$$
Similarly,
\begin{eqnarray*}
\E(W_{1,2}{}{(n)}^2)  &= &\frac1{n^2\mathcal{N}_n}\sum_{\lambda,\lambda'\in\Lambda_n}\lambda_1\lambda_2\lambda'_1\lambda'_2
\E[|a_\lambda|^2|a_{\lambda'}|^2]=\frac2{n^2\mathcal{N}_n}\sum_{\lambda\in\Lambda_n}\lambda_1^2\lambda_2^2=\frac14(1-\widehat{\mu}_n(4)),
\end{eqnarray*}
as well as
\begin{eqnarray*}
\E(W_1{}{(n)}W_2{}{(n)})  &= &\frac1{n^2\mathcal{N}_n}\sum_{\lambda,\lambda'\in\Lambda_n}\lambda_1^2{\lambda'_2}^2
\E[(|a_\lambda|^2-1)(|a_{\lambda'}|^2-1)]\\
&=&\frac2{n^2\mathcal{N}_n}\sum_{\lambda\in\Lambda_n}\lambda_1^2\lambda_2^2=\frac14(1-\widehat{\mu}_n(4)),
\end{eqnarray*}
and
\begin{eqnarray*}
\E(W_j{}{(n)}W_{1,2}{}{(n)})  &= &\frac1{n^2\mathcal{N}_n}\sum_{\lambda,\lambda'\in\Lambda_n}\lambda_j^2\lambda'_1\lambda'_2
\E[(|a_\lambda|^2-1)(|a_{\lambda'}|^2-1)]\\
&=&\frac2{n^2\mathcal{N}_n}\sum_{\lambda\in\Lambda_n}\lambda_j^2\lambda_1\lambda_2=0.
\end{eqnarray*}
Taking all these facts into consideration, we deduce that
$${\bf A}_n=\left(\begin{matrix}
&2 &1 &1 &0 \\
&1 &\frac{3+\widehat \mu_n(4)}{4} &\frac{1-\widehat \mu_n(4)}{4} &0\\
&1  &\frac{1-\widehat \mu_n(4)}{4} &\frac{3+\widehat \mu_n(4)}{4} &0 \\
&0 &0 &0 &\frac{1-\widehat \mu_n(4)}{4} \\
\end{matrix}\ \ \right ).$$
Now, let us turn to the expression of ${\bf B}_n$.
Using that $\E[a_\lambda  a_{\lambda'}]=1$ if $\lambda'=-\lambda$ and is zero otherwise, we obtain
$$
\E(M{}{(n)}^2)= \frac1{\mathcal{N}_n}\sum_{\lambda,\lambda'\in\Lambda_n}\E[a_\lambda  a_{\lambda'}]\E[ \overline{\widehat{a}_\lambda} \overline{\widehat{a}_{\lambda'}}]
=1.
$$
Similarly,
$$
\E(M_j{}{(n)}^2)= -\frac1{n\mathcal{N}_n}\sum_{\lambda,\lambda'\in\Lambda_n}\lambda_j\lambda'_j\E[a_\lambda  a_{\lambda'}]\E[ \overline{\widehat{a}_\lambda} \overline{\widehat{a}_{\lambda'}}]
=\frac1{n\mathcal{N}_n}\sum_{\lambda\in\Lambda_n}\lambda_j^2=\frac12,
$$
as well as
\begin{eqnarray*}
\E(M_{j,j}{}{(n)}^2)&=& \frac1{n^2\mathcal{N}_n}\sum_{\lambda,\lambda'\in\Lambda_n}\lambda_j^2{\lambda'_j}^2\E[a_\lambda  a_{\lambda'}]\E[ \overline{\widehat{a}_\lambda} \overline{\widehat{a}_{\lambda'}}]\\
&=&\frac1{n^2\mathcal{N}_n}\sum_{\lambda\in\Lambda_n}\lambda_j^4=\frac12\E(W_j^2)=\frac18(3+\widehat{\mu}_n(4)),
\end{eqnarray*}
and
\begin{eqnarray*}
\E(M_{1,2}{}{(n)}^2)&=& \frac1{n^2\mathcal{N}_n}\sum_{\lambda,\lambda'\in\Lambda_n}\lambda_1\lambda_2{\lambda'_1}{\lambda'_2}\E[a_\lambda  a_{\lambda'}]\E[ \overline{\widehat{a}_\lambda} \overline{\widehat{a}_{\lambda'}}]\\
&=&\frac1{n^2\mathcal{N}_n}\sum_{\lambda\in\Lambda_n}\lambda_1^2\lambda_2^2=\frac12\E(W_{12}^2)=\frac18(1-\widehat{\mu}_n(4)).
\end{eqnarray*}
Besides, it is immediate to check that, for any $l,j$,
$$
\E(M{}{(n)}M_j{}{(n)})=\E(M{}{(n)}M_{12}{}{(n)})=\E(M_j{}{(n)}M_{l,j}{}{(n)})=\E(M_{j,j}{}{(n)}M_{1,2}{}{(n)})=0.
$$
Finally,
$$
\E(M{}{(n)}M_{j,j}{}{(n)})= \frac1{n\mathcal{N}_n}\sum_{\lambda,\lambda'\in\Lambda_n}\lambda_j^2\E[a_\lambda  a_{\lambda'}]\E[ \overline{\widehat{a}_\lambda} \overline{\widehat{a}_{\lambda'}}]
=\frac1{n^2\mathcal{N}_n}\sum_{\lambda\in\Lambda_n}\lambda_j^2=\frac12,
$$
whereas
\begin{eqnarray*}
\E(M_{1,1}{}{(n)}M_{2,2}{}{(n)})&=& \frac1{n^2\mathcal{N}_n}\sum_{\lambda,\lambda'\in\Lambda_n}\lambda_1^2{\lambda'_2}^2\,
\E[a_\lambda  a_{\lambda'}]\E[ \overline{\widehat{a}_\lambda} \overline{\widehat{a}_{\lambda'}}]\\
&=&\frac1{n^2\mathcal{N}_n}
\sum_{\lambda\in\Lambda_n}\lambda_1^2\lambda_2^2
=\frac12\E(W_{12}{}{(n)}^2)=\frac18(1-\widehat{\mu}_n(4)).
\end{eqnarray*}
Putting everything together, we arrive at the following expression for ${\bf B}_n$
$$
{\bf B}_n=\left(\begin{matrix} &1 &0 &0 &\frac12 &\frac12 &0\\
&0 &\frac12 &0 &0 &0 &0 \\
&0 &0 &\frac12  &0 &0 &0\\
&\frac12 &0 &0 &\frac{3+\widehat \mu_n(4)}{8} & \frac{1-\widehat \mu_n(4)}{8} &0\\
&\frac12 &0 &0 &\frac{1-\widehat \mu_n(4)}{8} &\frac{3+\widehat \mu_n(4)}{8} &0\\
&0 &0 &0 &0 &0  &\frac{1-\widehat \mu_n(4)}{8}
\end{matrix}\ \ \right ).
$$

{
Now, let us prove that each component of ${\bf W}_{n_j}$ is asymptotically Gaussian as $\mathcal{N}_{n_j}\to +\infty$.
Since all components of ${\bf W}_{n_j}$ belong to the same {{} Wiener} chaos (the second one) and have a converging variance  (see indeed the diagonal part of  ${\bf B}_n$ just above), according to the Fourth Moment Theorem (see, e.g., \cite[Theorem 5.2.7]{NP}) it suffices to show that  the fourth cumulant of each component of ${\bf W}_{n_j}$ goes to zero as $\mathcal{N}_{n_j}\to +\infty$. Since we are dealing with sum of independent random variables,
checking such a property is straightforward. For sake of illustration, let us only consider the case of $W_2(n_j)$ which is representative of the difficulty. {{} We recall that, given a real-valued random variable $Z$ with mean zero, the fourth cumulant of $Z$ is defined by $\kappa_4(Z) :=\E[Z^4] - 3\E[Z^2]$. }  Since the $a_\lambda$ are independent except for the relation $\overline{a_\lambda}=a_{-\lambda}$, we can write, setting $\Lambda_n^+=\{\lambda\in\Lambda_n:\,\lambda_2>0\}$,
\begin{eqnarray*}
\kappa_4(W_2(n))&=&
\kappa_4\bigg(\frac{2}{n\sqrt{\mathcal{N}_{n}}}
\sum_{\lambda\in\Lambda_n^+}\lambda_2^2(|a_\lambda|^2-1)\bigg)
=\frac{16\,\kappa_4(|N_\mathbb{C}(0,1)|^2)}{n^4\mathcal{N}_{n}^2}
\sum_{\lambda\in\Lambda_n^+}\lambda_2^8\\
&\leq&
\frac{8\,\kappa_4(|N_\mathbb{C}(0,1)|^2)}{\mathcal{N}_{n}};
\end{eqnarray*}
to obtain the last inequality, we have used that $\lambda_2^2\leq \lambda_1^2+\lambda_2^2= n$.
As a result, $\kappa_4(W_2(n_j))\to 0$ as $\mathcal{N}_{n_j}\to +\infty$
and it follows from the Fourth Moment Theorem that $W_2(n_j)$ is asymptotically Gaussian. It is not difficult to apply a similar strategy in order to prove that, actually, each component of ${\bf W}_{n_j}$ is asymptotically Gaussian as well; details are left to the reader.
}

{
Finally, we make use of
\cite[Theorem 6.2.3]{NP} to conclude the proof of Lemma \ref{CLT}.
}
Indeed, (i) all components of ${\bf W}_n$ belong to the same Wiener chaos (the second one), (ii) each component of ${\bf W}_{n_j}$ is asymptotically Gaussian (as $\mathcal{N}_{n_j}\to +\infty$), and finally (iii) $\Sigma_{k,l}(n_j)\to {\bf M}_{k,l}(\eta)$ for each pair of indices $(k,l)$.
\end{proof}

\bigskip


\noindent\begin{proof}[Proofs of Proposition \ref{varianze chaos} and Proposition \ref{limite 4}]
For each subsequence $\lbrace n'_j\rbrace\subset \lbrace n_j\rbrace$, there exists a subsubsequence $\lbrace n''_j\rbrace\subset \lbrace n'_j\rbrace$ such that it holds either (i) $\widehat{\mu}_{n''_j}(4)\to \eta$ or (ii) $\widehat{\mu}_{n''_j}(4)\to -\eta$.

Combining Lemma \ref{carino} with Lemma \ref{CLT}, we have, as $j\to +\infty$,
\begin{equation}\label{limit}
\begin{split}
\frac{8\mathcal{N}_{n''_j}}{n''_j\pi}I_{n''_j}[4]\Rightarrow &\frac12G_1^2+\frac12G_5^2-3G_1G_5- G_2^2 -  G_3^2 -G_6^2 - G_7^2  +6G_2G_7+6G_6G_3-2G_4^2-2G_8^2\notag\\
&-12G_4G_8-4G_{10}^2-4G_{11}^2+4G_9^2-2G_{12}^2-2G_{13}^2+8G_{14}^2-12G_{12}G_{13},
\end{split}
\end{equation}
where $(G_1,\ldots,G_{14})$ denotes a Gaussian centered vector with covariance matrix \paref{matrix M}.

Since $\left \lbrace \frac{8N_{n''_j}}{n''_j\pi}I_{n''_j}[4] \right \rbrace$ is a sequence of random variables belonging to a fixed Wiener chaos and converging in distribution, by
standard arguments based on uniform integrability, we also have
\begin{equation*}
\begin{split}
\Var&\left(\frac{8N_{n''_j}}{n''_j\pi}I_{n''_j}[4]\right)\to \Var\Big(\frac12G_1^2+\frac12G_5^2-3G_1G_5- G_2^2 -  G_3^2 -G_6^2 - G_7^2  +6G_2G_7+6G_6G_3\notag\\
&-2G_4^2-2G_8^2-12G_4G_8-4G_{10}^2-4G_{11}^2+4G_9^2-2G_{12}^2-2G_{13}^2+8G_{14}^2-12G_{12}G_{13}\Big);
\end{split}
\end{equation*}
the proof of Proposition \ref{varianze chaos} is then concluded, once computing
\begin{equation*}
\begin{split}
\Var\Big(&\frac12G_1^2+\frac12G_5^2-3G_1G_5- G_2^2 -  G_3^2 -G_6^2 - G_7^2  +6G_2G_7+6G_6G_3-2G_4^2-2G_8^2\\
&-12G_4G_8-4G_{10}^2-4G_{11}^2+4G_9^2-2G_{12}^2-2G_{13}^2+8G_{14}^2-12G_{12}G_{13}\Big)= 8(3\eta^2 +5),
\end{split}
\end{equation*}
and noting that the latter variance is the same in both cases (i) and (ii).

Let us now prove Proposition \ref{limite 4}.
Let $(Z_1,\ldots,Z_{11})\sim N_{11}(0,I)$ be a standard Gaussian vector of $\R^{11}$. Then one can check that the vector
$$
\left(
\begin{matrix}
\sqrt{2}\,Z_5\\
\frac{1}{\sqrt{2}}Z_5+\frac12\sqrt{\eta+1}\,Z_3\\
\frac{1}{\sqrt{2}}Z_5-\frac12\sqrt{\eta+1}\,Z_3\\
\frac12\sqrt{1-\eta}\,Z_8\\
\sqrt{2}\,Z_6\\
\frac{1}{\sqrt{2}}Z_6+\frac12\sqrt{\eta+1}\,Z_4\\
\frac{1}{\sqrt{2}}Z_6-\frac12\sqrt{\eta+1}\,Z_4\\
\frac12\sqrt{1-\eta}\,Z_9\\
Z_2\\
\frac{1}{\sqrt{2}}Z_{10}\\
\frac{1}{\sqrt{2}}Z_{11}\\
\frac12Z_2+\sqrt{\frac18(\eta+1)}\,Z_1\\
\frac12Z_2-\sqrt{\frac18(\eta+1)}\,Z_1\\
\sqrt{\frac18(1-\eta)}\,Z_7
\end{matrix}
\right)
$$
admits ${\bf M}{}{(\eta)}$ for covariance matrix as well.
Expressing (\ref{limit}) in terms of $(U_{1},\ldots,U_{11})$ leads to the fact that (\ref{limit}) has the same law
as $$
\frac{1+\eta}{2}A
+\frac{1-\eta}{2}B
-2(C-2),$$
 with $A,B,C$ independent and
$A\overset{\rm law}{=}B\overset{\rm law}{=}2Z_1^2-Z_2^2-Z_3^2-6Z_2Z_3$ and
$C\overset{\rm law}{=}Z_1^2+Z_2^2$.

Finally, noting that the law of the random variable $\frac{1+\eta}{2}A
+\frac{1-\eta}{2}B
-2(C-2)$ is the same for case (i) and case (ii) and using that $(Z_1,Z_2,Z_3)\overset{\rm law}{=}(Z_1,\frac1{\sqrt{2}}(Z_2-Z_3),\frac1{\sqrt{2}}(Z_2+Z_3))$,
we get the desired conclusion.

\end{proof}

\section{The variance of higher order chaoses}\label{proof varianza sec}

In this section we shall prove Proposition \ref{asymptotic}.
Let us decompose the torus $\mathbb T$ as a disjoint union of squares $Q_k$ of side length $1/M$ (where $M\approx \sqrt{E_n}$ is a large integer\footnote{{$M= {{} \lceil  d \sqrt{E_n} \rceil}$,  $d\in \R_{>0}$}}), obtained by translating along directions ${k}/M$, ${k}\in \mathbb Z^2$, the square $Q_0 := [0,1/M)\times [0,1/M)$ containing the origin. By construction, the south-west corner of each square is therefore situated at the point ${k}/M$.

\subsection{Singular points and cubes}
Let us first give some definitions, inspired by \cite[\S 6.1]{ORW} and \cite[\S 4.3]{RW2}. Let us denote by {\blue $0<\eps_1<\frac{1}{10^{10} }$} a very small number\footnote{{Let us now choose $d$ such that $d\ge \frac{16\pi^2}{\eps_1}$}.}  that will be fixed until the end. From now on, we shall use the simpler notation $r_j:=\partial_j r_n$, and $r_{ij} := \partial_{ij} r_n$ for $i,j=1,2$.

\begin{definition}[Singular pairs of points and cubes]\label{sing point}

\

\noindent i) A pair of points $(x,y)\in \mathbb T\times \mathbb T$ is called singular if either $|r(x-y)|>\eps_1$ or $|r_1(x-y)|>\eps_1\, \sqrt{n}$ or $|r_2(x-y)|>\eps_1\, \sqrt{n}$ or $|r_{12}(x-y)|>\eps_1\, n$ or $|r_{11}(x-y)|>\eps_1\, n$ or $|r_{22}(x-y)|>\eps_1\, n$.

\noindent ii) A pair of cubes $(Q,Q')$ is called singular if the product $Q\times Q'$ contains a singular pair of points.
\end{definition}
For instance, $(0,0)$ is a singular pair of points and hence $(Q_0,Q_0)$ is a singular pair of cubes. In what follows we will often drop the dependence of $k$ from $Q_k$.

\begin{lemma}\label{lemma sing}
Let $(Q, Q')$ be a singular pair of cubes, then $|r(z-w)|>\frac12\eps_1$ or $|r_1(z-w))|>\frac12\,\eps_1\, \sqrt{n}$ or $|r_2(z-w))|>\frac12\eps_1\, \sqrt{n}$ or $|r_{12}(z-w))|>\frac12\eps_1\, n$ or $|r_{11}(z-w))|>\frac12\eps_1\, n$ or $|r_{22}(z-w))|>\frac12\eps_1\, n$, for every $(z,w)\in Q\times Q'$.
\end{lemma}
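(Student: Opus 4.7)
The proof plan is to use a Lipschitz continuity argument: a singular pair of cubes contains, by definition, some singular pair of points $(x,y)$ which witnesses one of the six inequalities in Definition \ref{sing point} (with threshold $\varepsilon_1$); we want to show that any other pair $(z,w)\in Q\times Q'$ still satisfies the same inequality (with threshold $\varepsilon_1/2$), because the argument $z-w$ is close to $x-y$ and the relevant function ($r$, $r_i$, or $r_{ij}$) does not vary much over small displacements.

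First, I would observe that since each cube $Q, Q'$ has side length $1/M$, any two points in the same cube are within Euclidean distance $\sqrt{2}/M$; hence for all $(x,y),(z,w)\in Q\times Q'$,
\[
\bigl\|(z-w)-(x-y)\bigr\|\;\le\;\|z-x\|+\|w-y\|\;\le\;\frac{2\sqrt{2}}{M}.
\]
Next, I would derive uniform Lipschitz bounds for $r_n$ and its derivatives directly from the defining trigonometric series in \eqref{e:cov}. Since $|\lambda_i|\le \sqrt{n}$ for every $\lambda\in\Lambda_n$, termwise differentiation yields
\[
\|\nabla r_n\|_\infty\le 2\pi\sqrt{2n},\qquad \|\nabla r_i\|_\infty\le 4\pi^2\sqrt{2}\,n,\qquad \|\nabla r_{ij}\|_\infty\le 8\pi^3\sqrt{2}\,n^{3/2}.
\]

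Now I would fix an arbitrary $(z,w)\in Q\times Q'$ and a singular pair $(x,y)\in Q\times Q'$ (which exists by hypothesis), and argue case by case according to which of the six inequalities is satisfied by $(x,y)$. For instance, if $|r(x-y)|>\varepsilon_1$, the mean value theorem combined with the Lipschitz bound gives
\[
|r(z-w)-r(x-y)|\;\le\;2\pi\sqrt{2n}\cdot\frac{2\sqrt{2}}{M}\;=\;\frac{8\pi\sqrt{n}}{M}.
\]
Recalling $M=\lceil d\sqrt{E_n}\rceil = \lceil 2\pi d\sqrt{n}\rceil$ with $d\ge 16\pi^2/\varepsilon_1$, this upper bound is at most $\varepsilon_1/2$, and hence $|r(z-w)|>\varepsilon_1/2$. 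The other five cases are treated identically, the gain in size of the derivative bound being exactly compensated by the corresponding gain in size of the threshold ($\sqrt{n}$ for $r_i$, $n$ for $r_{ij}$), so that each Lipschitz error is at most half of the relevant threshold.

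The only real point to watch is the bookkeeping with the constant $d$: one must check that the single value fixed in the footnote simultaneously controls the errors for $r$, $r_i$, and $r_{ij}$. Since the ratios (Lipschitz constant)/(threshold) equal $2\pi\sqrt{2n}/1$, $4\pi^2\sqrt{2}\,n/\sqrt{n}$, and $8\pi^3\sqrt{2}\,n^{3/2}/n$ respectively, all three scale like a fixed constant times $\sqrt{n}$, so a single choice of $d$ proportional to $1/\varepsilon_1$ suffices. This is the essence of the argument; no genuine obstacle appears, only a careful tracking of constants.
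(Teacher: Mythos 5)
Your argument is the same as the paper's: both proofs are a Lipschitz/mean--value estimate for $r_n$ and its derivatives over the displacement between $(z-w)$ and $(x-y)$, closed by the choice $M=\lceil d\sqrt{E_n}\rceil$ with $d\ge 16\pi^2/\eps_1$. The only cosmetic difference is that the paper rescales, noting that $s\mapsto r_n(s/\sqrt n)$ and its derivatives up to order two are Lipschitz with one universal constant $c=8\pi^3$, while you track separate Lipschitz constants $2\pi\sqrt{2n}$, $4\pi^2\sqrt2\,n$, $8\pi^3\sqrt2\,n^{3/2}$ for $r$, $r_i$, $r_{ij}$; the two bookkeepings are equivalent.

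One small slip: with your stated constants the second-derivative case does not quite deliver the factor $\tfrac12$. You get error $\le 8\pi^3\sqrt2\,n^{3/2}\cdot\frac{2\sqrt2}{M}=\frac{32\pi^3 n^{3/2}}{M}$, and $M\ge 2\pi d\sqrt n\ge \frac{32\pi^3\sqrt n}{\eps_1}$ only gives error $\le \eps_1 n$, not $\le \tfrac12\eps_1 n$, so your claim that ``each Lipschitz error is at most half of the relevant threshold'' is not verified as written for $r_{ij}$ (and is borderline even after sharpening the gradient bound to $8\pi^3 n^{3/2}$ using $\lambda_1^2+\lambda_2^2=n$). This is a constant-tracking issue rather than a gap in the idea --- the paper itself silently bounds the displacement by $2/M$ instead of $2\sqrt2/M$ --- and it is harmless for the application (Lemma \ref{bello} only needs a lower bound by a fixed positive multiple of $\eps_1$), but to state the lemma exactly as given you should either take $d$ a constant factor larger (say $d\ge 64\pi^2/\eps_1$) or make the sharper gradient estimates explicit.
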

\noindent
\begin{proof} First note that the function $\mathbb T\ni s\mapsto r(s/\sqrt{n})$ and  its derivatives up to the order two are Lipschitz with a universal Lipschitz constant  $c=8\pi^3$ (in particular, independent of $n$).
Let us denote by $(x,y)$ the singular pair of points contained in $Q\times Q'$ and suppose that $r(x-y)>\eps_1$.  For every $(z,w)\in Q\times Q'$,
\begin{equation*}
\begin{split}
|r(z-w) - r(x-y)| &= \left |r\left( \frac{(z-w)\cdot \sqrt n}{\sqrt n}  \right)-r\left( \frac{(x-y)\cdot \sqrt n}{\sqrt n}  \right) \right |\cr
& \le c\sqrt n |(z-x) - (w-y)|\le 2c \sqrt{n}  \frac{1}{M}.
\end{split}
\end{equation*}
{Since $d\ge \frac{16\pi^2}{\eps_1}$ in  $M= {{} \lceil  d \sqrt{E_n} \rceil}$}, then
\begin{equation*}
\begin{split}
r(z-w) \ge r(x-y) - \eps_1 /2 > \eps_1 /2.
\end{split}
\end{equation*}
The case $r(x-y) < -\eps_1$ in indeed analogous.
The rest of the proof for derivatives follows the same argument.

\end{proof}

Let us now denote by $B_Q$ the union of all squares $Q'$ such that $(Q,Q')$ is a singular pair.  The number of such cubes $Q'$ is $M^2 {\rm Leb}(B_Q)$, the area of each cube being $1/M^2$.

\begin{lemma}\label{bello}
It holds that
$\text{Leb}(B_Q) \ll  \int_{\mathbb T} r(x)^6\,dx$.
\end{lemma}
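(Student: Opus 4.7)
My plan is to bound $\text{Leb}(B_Q)$ by the measure of a single ``exceptional set'' in the torus, and then estimate that set by Markov's inequality applied to the sixth power of $r_n$ and its derivatives, using the fact that differentiating $r_n$ costs at most a factor of $\sqrt n$ per derivative inside a sixth moment.

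More precisely, introduce
$$
E := \{x\in\T : |r_n(x)|>\tfrac{\eps_1}{2}\} \cup \bigcup_{j=1,2}\{|r_j(x)|>\tfrac{\eps_1 \sqrt n}{2}\} \cup \bigcup_{i,j=1,2}\{|r_{ij}(x)|>\tfrac{\eps_1 n}{2}\}.
$$
The first step is to argue that $\text{Leb}(B_Q)\le \text{Leb}(E)$. Indeed, if $Q'\subset B_Q$, then $(Q,Q')$ is singular, so Lemma \ref{lemma sing} guarantees that for every $(z,w)\in Q\times Q'$ one of the six half-bounds holds at the point $z-w$; equivalently, $z-w\in E$. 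Fixing any $z_0\in Q$, this forces $Q'\subset z_0-E$ for every $Q'\subset B_Q$, and thus $B_Q\subset z_0-E$. Translation invariance of Lebesgue measure gives $\text{Leb}(B_Q)\le \text{Leb}(E)$.

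The second step is to control $\text{Leb}(E)$ by $\int_{\T} r_n^6$. For the first summand in $E$, Markov directly yields $\text{Leb}(\{|r_n|>\eps_1/2\})\le (2/\eps_1)^6 \int_\T r_n^6$. For the derivative terms I will use the Fourier representation $r_n(x)=\mathcal N_n^{-1}\sum_{\lambda\in\Lambda_n}e^{2\pi i\langle\lambda,x\rangle}$, so that expanding and integrating monomials on $\T$ gives
$$
\int_\T r_n^6\,dx = \frac{|S_6(n)|}{\mathcal N_n^6},\qquad \left|\int_\T (\partial_j r_n)^6\,dx\right| \le \frac{(2\pi)^6}{\mathcal N_n^6}\sum_{\boldsymbol\lambda\in S_6(n)}\prod_{k=1}^6|\lambda_{k,j}|,
$$
and similarly for second derivatives. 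Since $|\lambda_{k,j}|\le\sqrt n$ for every component, the six-fold product is bounded by $n^3$ in the first-derivative case and by $n^6$ in the second-derivative case; hence
$$
\int_\T (\partial_j r_n)^6 \ll n^3 \int_\T r_n^6,\qquad \int_\T (\partial_{ij} r_n)^6 \ll n^6 \int_\T r_n^6.
$$
Applying Markov to each of the remaining five sets defining $E$ then produces factors of $n^3/(\eps_1\sqrt n/2)^6$ and $n^6/(\eps_1 n/2)^6$, both of which cancel the $n$-dependence and leave a constant multiple of $\int_\T r_n^6\,dx$.

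Combining the two steps gives $\text{Leb}(B_Q)\le \text{Leb}(E)\ll \int_\T r_n(x)^6\,dx$ with an implicit constant depending only on $\eps_1$, which is precisely the claim. The only slightly delicate point is verifying the sixth-moment estimates for the derivatives; these are entirely elementary given the Fourier form of $r_n$, the symmetry $-\Lambda_n=\Lambda_n$, and the pointwise bound $|\lambda|\le\sqrt n$ on $\Lambda_n$, so I do not expect a genuine obstacle anywhere in the argument.
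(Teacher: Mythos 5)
Your proposal is correct and follows essentially the same route as the paper: both arguments reduce the measure of the singular set to a Markov/Chebyshev-type bound with sixth powers (the paper writes this as inserting $|\widetilde r_i|^6/|\widetilde r_i|^6$ under the integral, you as Markov applied to a translated exceptional set $E$), and both then dominate the sixth moments of the first and second derivatives of $r_n$ by $\int_\T r_n^6 = |S_6(n)|/\mathcal N_n^2$ via the Fourier expansion and the bound $|\lambda_j|\le\sqrt n$ — you track the powers of $n$ explicitly against the thresholds $\eps_1\sqrt n$, $\eps_1 n$, while the paper works directly with the normalized derivatives, which is the same computation.
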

\noindent
\begin{proof}
Let us first note that
$$
B_Q \subset B_{Q}^0 \cup B_Q^1 \cup B_Q^2 \cup B_Q^{12} \cup B_Q^{11} \cup B_Q^{22},
$$
where $B_{Q}^0$ is the union of all cubes $Q'$ such that there exists $(x,y)\in Q\times Q'$ enjoying $|r(x-y)|>\eps_1$ and for $i,j=1,2$, $B_Q^{i}$ is the union of all cubes $Q'$ such that there exists $(x,y)\in Q\times Q'$ enjoying $|r_i(x-y)|>\eps_1\, \sqrt n$ and finally $B_Q^{ij}$ is the union of all cubes $Q'$ such that there exists $(x,y)\in Q\times Q'$ enjoying $|r_{ij}(x-y)|>\eps_1\,  n$. We can hence write
 \begin{equation*}
\text{Leb}(B_Q) \le \text{Leb}(B_{Q}^0)+\text{Leb}(B_Q^1)+ \text{Leb}(B_Q^2)+ \text{Leb}(B_Q^{12})+ \text{Leb}(B_Q^{11})+ \text{Leb}(B_Q^{22}).\end{equation*}
Let us now fix $z\in Q$;  then Lemma \ref{lemma sing} yields
 \begin{equation*}
\text{Leb}(B_Q^0) = \int_{B_Q^0} \frac{|r(z-w)|^6}{|r(z-w)|^6}\,dw\le \eps_1^{-6} \int_{B_Q^0} |r(z-w)|^6\,dw \le \eps_1^{-6}\int_{\mathbb T} |r(x)|^6\,dx.
\end{equation*}
Moreover, for $i=1,2$,
\begin{equation*}
\text{Leb}(B_Q^i) = \int_{B_Q^i} \frac{|\widetilde r_i(z-w)|^6}{|\widetilde r_i(z-w)|^6}\,dw\le \eps_1^{-6} \int_{B_Q^i} |\widetilde r_i(z-w)|^6\,dw \le \eps_1^{-6}\int_{\mathbb T} |\widetilde r_i(x)|^6\,dx,
\end{equation*}
where $\widetilde r_i:=r_i/\sqrt{E_n}$ are the normalized derivatives.
Since
\begin{equation*}
\begin{split}
\int_{\mathbb T} \widetilde r_i(x)^6\,dx&= \frac{1}{\mathcal N_n^6}\sum_{\lambda, \lambda',\dots, \lambda^{v}} \frac{\lambda_i}{\sqrt n} \frac{\lambda_i'}{\sqrt n}\cdots
 \frac{\lambda_i^v}{\sqrt n}\int_{\mathbb T} \e^{i2\pi\langle \lambda -\lambda'+\dots -\lambda^v,x \rangle}\,dx \cr
&= \frac{1}{\mathcal N_n^6}\sum_{\lambda-\lambda'+\dots-\lambda^{v}=0} \frac{\lambda_i}{\sqrt n} \frac{\lambda_i'}{\sqrt n}\cdots
 \frac{\lambda_i^v}{\sqrt n}\cr
&\le \frac{|S_6(n)|}{\mathcal N_n^6}= \int_{\mathbb T}  r(x)^6\,dx,
\end{split}
\end{equation*}
we have
$$
\text{Leb}(B_Q^i) \ll \int_{\mathbb T}  r(x)^6\,dx.
$$
An analogous argument applied to $B_Q^{ij}$ for $i,j=1,2$ allows to conclude the proof.

\end{proof}

The number of cubes $Q'$ such that the pair $(Q,Q')$ is singular is hence {negligible {{} with respect to}} $E_n\, R_n(6)$.

\subsection{Variance and cubes}

We write the total number $I_n$ of nodal intersections  as the sum of the number $I_{n_{ |_Q}}$ of nodal intersections restricted to each square $Q$, i.e.
\begin{equation*}
I_n = \sum_Q I_{n_{ |_Q}}.
\end{equation*}
We have
\begin{equation*}
\text{proj}\left(I_n|C_{\ge 6}\right )= \sum_Q \text{proj}\left (I_{n_{ |_Q}}|C_{\ge 6} \right),
\end{equation*}
so that
\begin{equation*}
\Var\left(\text{proj}(I_n|C_{\ge 6}) \right )= \sum_{Q,Q'} \Cov\left (\text{proj}\left (I_{n_{ |_Q}}|C_{\ge 6}\right ), \text{proj}\left (I_{n_{ |_{Q'}}}|C_{\ge 6}\right )\right ).
\end{equation*}
We are going to separately investigate the contribution of the singular pairs and the non-singular pairs of cubes:
\begin{equation*}
\begin{split}
\Var(\text{proj}(I_n|C_{\ge 6})) =& \sum_{(Q,Q')\text{ sing.}} \Cov(\text{proj}(I_{n_{ |_Q}}|C_{\ge 6}), \text{proj}(I_{n_{ |_{Q'}}}|C_{\ge 6})) \cr
&+ \sum_{(Q,Q')\text{ non sing.}} \Cov(\text{proj}(I_{n_{ |_Q}}|C_{\ge 6}), \text{proj}(I_{n_{ |_{Q'}}}|C_{\ge 6})).
\end{split}
\end{equation*}

\subsubsection{The contribution of singular pairs of cubes}

\begin{proof}[Proof of Lemma \ref{varianza sing}]
By Cauchy-Schwarz inequality and the stationarity of ${\bf T}_n$, recalling moreover
 Lemma \ref{bello}, we have
\begin{equation*}
\begin{split}
&\left | \sum_{(Q,Q')\text{ sing.}} \Cov\left (\text{proj}\left (I_{n_{ |_Q}}|C_{\ge 6}\right ), \text{proj}\left (I_{n_{ |_{Q'}}}|C_{\ge 6}\right )\right ) \right | \cr
&\le  \sum_{(Q,Q')\text{ sing.}} \left |\Cov\left (\text{proj}\left (I_{n_{ |_Q}}|C_{\ge 6}\right ), \text{proj}\left (I_{n_{ |_{Q'}}}|C_{\ge 6}\right )\right ) \right |\cr
& \le  \sum_{(Q,Q')\text{ sing.}} \sqrt{\Var\left (\text{proj}\left (I_{n_{ |_Q}}|C_{\ge 6}\right )\right ) \Var\left (\text{proj}\left (I_{n_{ |_{Q'}}}|C_{\ge 6}\right )\right )}\cr &\ll E_n^2 R_n(6)   \Var\left (\text{proj}\left (I_{n_{ |_{Q_0}}}|C_{\ge 6}\right )\right ),
\end{split}
\end{equation*}
where, from now on, $Q_0$ denotes the square containing the origin.
Now,
\begin{equation*}
\begin{split}
\Var\left (\text{proj}\left (I_{n_{ |_{Q_0}}}|C_{\ge 6}\right )\right )\le \E\left [I_{n_{ |_{Q_0}}}^2\right ]= \underbrace{\E\left [I_{n_{ |_{Q_0}}}^2\right ] - \E\left [I_{n_{ |_{Q_0}}}\right ]}_{=:A}+\E\left [I_{n_{ |_{Q_0}}}\right ].
\end{split}
\end{equation*}
It is immediate to check that
\begin{equation*}
\E\left [  I_{n_{ |_{Q_0}}}        \right ] = \frac{2\pi n}{M^2},
\end{equation*}
in particular $\E\left [  I_{n_{ |_{Q_0}}}        \right ] =O(1)$.
Note that $A$ is the $2$-th factorial moment of $I_{n_{ |_{Q_0}}}$:
\begin{equation*}
A = \E\left[I_{n_{ |_{Q_0}}}\left(I_{n_{ |_{Q_0}}}-1    \right)    \right].
\end{equation*}
Applying \cite[Theorem 6.3]{AW}, we can write
\begin{equation}\label{media}
\begin{split}
A=\E\left [  I_{n_{ |_{Q_0}}}  \left(  I_{n_{ |_{Q_0}}}  -1   \right)     \right ] = \int_{Q_0}\int_{Q_0} K_2(x,y)\,dx dy,
\end{split}
\end{equation}
where
\begin{equation*}\label{corr1}
K_2(x,y) := p_{({\bf T}_n(x),{\bf T}_n(y))}(0,0)\, \E\left [\left | J_{{\bf T}_n}(x)\right |\cdot \left | J_{{\bf T}_n}(y)\right |\Big | {\bf T}_n(x) = {\bf T}_n(y) = 0  \right ]
\end{equation*}
is the so-called $2$-point correlation function. {Indeed, Proposition \ref{gio} ensures that for $(x,y)\in Q_0\times Q_0$, the vector $({\bf T}_n(x), {\bf T}_n(y))$ is non-degenerate {except} on the diagonal $x=y$}.

Note that, by stationarity of the model, we can write \paref{media} as
\begin{equation*}
\begin{split}
\E\left [  I_{n_{ |_{Q_0}}}  \left(  I_{n_{ |_{Q_0}}}  -1   \right)     \right ] = \text{Leb}(Q_0)\int_{\widetilde Q_0} K_2(x)\,dx,
\end{split}
\end{equation*}
where $K_2(x):=K_2(x,0)$ and $\widetilde Q_0$ is $2 Q_0$.

Let us first check that the function $x\mapsto K_2(x)$  is integrable around the origin.
Note that, by Cauchy-Schwarz inequality,
\begin{equation*}
\begin{split}
K_2(x) &= \frac{1}{1-r^2(x)}\, \E\left [\left | J_{{\bf T}_n}(x)\right |\cdot \left | J_{{\bf T}_n}(0)\right |\Big | {\bf T}_n(x) = {\bf T}_n(0) = 0  \right ]\cr
&\le  \frac{1}{1-r^2(x)}\, \E\left [\left | J_{{\bf T}_n}(0)\right |^2\Big | {\bf T}_n(x) = {\bf T}_n(0) = 0  \right ].
\end{split}
\end{equation*}
Hypercontractivity on Wiener chaoses \cite{NP} ensures that there exists $c>0$ such that
\begin{equation*}
K_2(x) \le  c \frac{1}{1-r^2(x)}\, \left( \E\left [\left | J_{{\bf T}_n}(0)\right |\Big | {\bf T}_n(x) = {\bf T}_n(0) = 0  \right ]\right)^2.
\end{equation*}
Now, thanks to \cite[(1.3)]{KZ}
\begin{equation*}
K_2(x) \le  c \frac{1}{1-r^2(x)}\, \left( \sqrt{|\Omega_n(x)|}\right)^2
= c \frac{|\Omega_n(x)|}{1-r^2(x)},
\end{equation*}
where $\Omega_n(x)$  denotes the covariance matrix of $\nabla T_n(0)$ conditioned to ${T}_n(x) = { T}_n(0) = 0$ (see \paref{matrix omega} for a precise expression).

The Taylor expansion in Lemma \ref{lemma taylor} gives that, as $\|x\|\to 0$,
\begin{equation*}
\frac{|\Omega_n(x)|}{1-r^2(x)} = c E_n^2 + E_n^3O(\|x\|^2),
\end{equation*}
for some other constant $c>0$, where the constants involving in the `O' notation do not depend on $n$, so that
\begin{equation*}
\begin{split}
\E\left [  I_{n_{ |_{Q_0}}}  \left(  I_{n_{ |_{Q_0}}}  -1   \right)     \right ] = \text{Leb}(Q_0)\int_{\widetilde Q_0} K_2(x)\,dx\ll \frac{E_n^2}{M^4},
\end{split}
\end{equation*}
which is the result we looked for.

\end{proof}

\subsubsection{The contribution of non-singular pairs of cubes}

\begin{proof}[Proof of Lemma \ref{varianza nonsing}]
For any square $Q$, we can write
\begin{eqnarray*}
&& \text{proj}\left(I_{n_{|_{Q}}}|C_{\ge 6}\right) \\
&& =\frac{E_n}{2} \sum_{q\ge 3}\sum_{i_1+i_2+i_3 +j_1+j_2+j_3=2q} \frac{\beta_{i_1}\beta_{j_1}\alpha_{i_2,i_3,j_2,j_3}}{i_1! i_2! i_3! j_1! j_2! j_3!}\times\\
&& \quad\times \int_{Q} H_{i_1}(T_n(x)) H_{i_2}(\partial_1 \widetilde T_n(x)) H_{i_3}(\partial_2 \widetilde T_n(x))H_{j_1}(\widehat T_n(x)) H_{j_2}(\partial_1 \widetilde {\widehat T}_n(x)) H_{j_3}(\partial_2 \widetilde {\widehat T}_n(x))\,dx,
\end{eqnarray*}
for even $i_1,j_1$ and $i_2,i_3,j_2,j_3$ with the same parity.
Recall that $\beta_l=0$ for odd $l$, and that $\beta_{2l}^2/{(2l)!}\approx 1/\sqrt{l}$, as $l\to \infty$. {\blue 
We have
\begin{equation}\label{lungo1}
\begin{split}
& \left| \sum_{(Q,Q')\text{ non sing.}}  \Cov\left (\text{proj}\left(I_{n_{|_{Q}}}|C_{\ge 6}\right),\text{proj}\left(I_{n_{|_{Q'}}}|C_{\ge 6}\right) \right) \right|\cr
&\le E_n^2 \sum_{q\ge 3} \sum_{i_1+i_2+i_3+j_1+j_2+j_3=2q}\sum_{a_1+a_2+a_3+b_1+b_2+b_3=2q} \left |\frac{\beta_{i_1}\beta_{j_1}\alpha_{i_2,i_3,j_2,j_3}}{i_1! i_2! i_3! j_1! j_2! j_3!}\right |\cdot \left |\frac{\beta_{a_1}\beta_{b_1}\alpha_{a_2,a_3,b_2,b_3}}{a_1! a_2! a_3! b_1! b_2! b_3!}\right |\cr
&\times\Big|\sum_{(Q,Q')\text{ non sing.}}\cr
& \int_Q \int_{Q'}  \E\Big [ H_{i_1}(T_n(x)) H_{i_2}(\widetilde{\partial_1}  T_n(x)) H_{i_3}(\widetilde{\partial_2  }T_n(x))H_{j_1}(\widehat T_n(x)) H_{j_2}(\widetilde{\partial_1}  {\widehat T}_n(x)) H_{j_3}(\widetilde{\partial_2} {\widehat T}_n(x))\cr
&\times H_{a_1}(T_n(y)) H_{a_2}(\widetilde{\partial_1}  T_n(y)) H_{a_3}(\widetilde{\partial_2} T_n(y))H_{b_1}(\widehat T_n(y)) H_{b_2}(\widetilde{\partial_1}  {\widehat T}_n(y)) H_{b_3}(\widetilde{\partial_2} {\widehat T}_n(y))
\Big ]\,dxdy\Big|.
\end{split}
\end{equation}
Let us now adopt the same notation as in Proposition \ref{p:ls}.
For $n\in S$ we set
\begin{eqnarray*}
&& (X_0(x), X_1(x), X_2(x), Y_0(x),  Y_1(x), Y_2(x))\\
&&\quad\quad\quad := (T_{n}(x),\, \widetilde\partial_{1} T_{n}(x), \, \widetilde\partial_{2} T_{n}(x), \hT(x),\, \widetilde \partial_{1}\hT(x), \, \widetilde \partial_{2}\hT(x)), \quad x\in \Tb.
\end{eqnarray*}
From Proposition \ref{p:ls} and \paref{lungo1}, we have
\begin{eqnarray}\label{lungo2}
&& \left| \sum_{(Q,Q')\text{ non sing.}} \Cov\left (\text{proj}\left(I_{n_{|_{Q}}}|C_{\ge 6}\right),\text{proj}\left(I_{n_{|_{Q'}}}|C_{\ge 6}\right) \right) \right|\\
&&\le E_n^2 \sum_{q\ge 3} \sum_{i_1+i_2+i_3+j_1+j_2+j_3=2q}\sum_{a_1+a_2+a_3+b_1+b_2+b_3=2q} \left |\frac{\beta_{i_1}\beta_{j_1}\alpha_{i_2,i_3,j_2,j_3}}{i_1! i_2! i_3! j_1! j_2! j_3!}\right |\cdot \left |\frac{\beta_{a_1}\beta_{b_1}\alpha_{a_2,a_3,b_2,b_3}}{a_1! a_2! a_3! b_1! b_2! b_3!}\right | \notag \\
&&\quad\times {\bf 1}_{\{i_1+i_2+i_3 = a_1+a_2+a_3\}}{\bf 1}_{\{j_1+j_2+j_3 = b_1+b_2+b_3\}}  \Big| V(i_1,i_2,i_3 ; j_1,j_2,j_3 ; a_1,a_2,a_3 ; b_1,b_2,b_3)  \Big| ,\notag \\
&& :=E_n^2 \times Z,\label{e:rol}
\end{eqnarray}
where each of the terms $V = V(i_1,i_2,i_3 ; j_1,j_2,j_3 ; a_1,a_2,a_3 ; b_1,b_2,b_3)$ is the sum of no more than $(2q)!$ terms of the type 
\begin{equation}\label{e:sergio}
v =\sum_{(Q,Q')\text{ non sing.}} \int_Q\int_{Q'} \prod_{u=1}^{2q} R_{l_u, k_u}({\green x-y}) \, dxdy,
\end{equation}
where $k_u, l_u \in \{0,1,2\}$ and {\green where}, for $l,k=0,1,2$ and $x,y\in \mathbb T$, we set
$$
{\green R_{l,k}(x-y)}:= \E\left[X_l(x) X_k(y)   \right]= \E\left[Y_l(x) Y_k(y)   \right].
$$
Note that, for any {even $p\in \N$}, we have
\begin{equation}\label{deriv}
\int_{\mathbb T} R_{l,k}(x)^{p}\,dx \leq \int_{\mathbb T} {\green r_n}(x)^p\,dx{\green =:R_n(p)}
\end{equation}
and recall moreover that, for $x,y\in \T$, $|R_{l,k}(x-y)|\leq 1$, and, for $(x,y)\in Q\times Q'$,
\begin{equation}\label{epsuno}
|R_{l,k}(x-y)|<\eps_1.
\end{equation}
Using the definition of a non-singular pair of cubes, as well as the fact that the sum defining $Z$ in \eqref{e:rol} involves indices $q\geq 3$, one deduces that, for $v$ as in \eqref{e:sergio}, 
\begin{eqnarray*}
|v| &\leq& \eps_1^{2q-6} \sum_{(Q,Q')\text{ non sing.}} \int_Q\int_{Q'} \prod_{u=1}^{6} \left |R_{l_u, k_u}({\green x-y})\right| dxdy\\
&\leq &  \eps_1^{2q-6} \int_\T \prod_{u=1}^{6} \left |R_{l_u, k_u}(x)\right| dx \leq  \eps_1^{2q-6} R_n(6),
\end{eqnarray*}
where we have applied a generalized H\"older inequality together with \eqref{deriv} in order to deduce the last estimate. This bound implies that each of the terms $V$ contributing to $Z$ can be bounded as follows:
\begin{eqnarray*}
&&\Big| V(i_1,i_2,i_3 ; j_1,j_2,j_3 ; a_1,a_2,a_3 ; b_1,b_2,b_3) \Big|\\
&&\quad\quad \leq (2q)! \frac{R_n(6)}{\eps_1^6} \eps_1^{2q} = (2q)! \frac{R_n(6)}{\eps_1^6} ( \sqrt{\eps_1} )^{i_1+\cdots+ j_3}( \sqrt{\eps_1} )^{a_1+\cdots+ b_3}.
\end{eqnarray*}
One therefore infers that 
\begin{eqnarray*}
&& Z\leq \frac{R_n(6)}{\eps_1^6} \sum_{q\geq 3} (2q)! \sum_{i_1+i_2+i_3+j_1+j_2+j_3=2q}\sum_{a_1+a_2+a_3+b_1+b_2+b_3=2q} \left |\frac{\beta_{i_1}\beta_{j_1}\alpha_{i_2,i_3,j_2,j_3}}{i_1! i_2! i_3! j_1! j_2! j_3!}\right | \times \notag \\
&&\quad\quad \quad\quad\quad \quad \left |\frac{\beta_{a_1}\beta_{b_1}\alpha_{a_2,a_3,b_2,b_3}}{a_1! a_2! a_3! b_1! b_2! b_3!}\right |\times  ( \sqrt{\eps_1} )^{i_1+\cdots+ j_3}( \sqrt{\eps_1} )^{a_1+\cdots+ b_3} =:\frac{R_n(6)}{\eps_1^6}\times S  .\notag
\end{eqnarray*}
In order to show that $S$ is finite, we write
\begin{eqnarray*}
S& = & \sum_{q \geq 3} (2q)! \sum_{i_1+i_2+i_3+j_1+j_2+j_3=2q}\sum_{a_1+a_2+a_3+b_1+b_2+b_3=2q} \left |\frac{\beta_{i_1}\beta_{j_1}\alpha_{i_2,i_3,j_2,j_3}}{i_1! i_2! i_3! j_1! j_2! j_3!}\right | \times \notag \\
&&\quad\quad \quad\quad\quad \quad \left |\frac{\beta_{a_1}\beta_{b_1}\alpha_{a_2,a_3,b_2,b_3}}{a_1! a_2! a_3! b_1! b_2! b_3!}\right |\times  ( \sqrt{\epsilon_1} )^{i_1+\cdots+ j_3}( \sqrt{\epsilon_1} )^{a_1+\cdots+ b_3}\\
&\leq & \sum_{q\geq 0}  \sum_{i_1+i_2+i_3+j_1+j_2+j_3=2q}\sum_{a_1+a_2+a_3+b_1+b_2+b_3=2q} \left |\frac{\beta_{i_1}\beta_{j_1}\alpha_{i_2,i_3,j_2,j_3}}{i_1! i_2! i_3! j_1! j_2! j_3!}\right | \times \notag \\
&&\sqrt{(i_1+\cdots+j_3)!} \sqrt{(a_1+\cdots+b_3)!}  \left| \frac{\beta_{a_1}\beta_{b_1}\alpha_{a_2,a_3,b_2,b_3}}{a_1! a_2! a_3! b_1! b_2! b_3!}\right |\times  ( \sqrt{\epsilon_1} )^{i_1+\cdots+ j_3+a_1+\cdots+ b_3}\\
&\leq & \sum_{i_1,\ldots ,j_3, a_1\ldots, b_3} \left |\frac{\beta_{i_1}\beta_{j_1}\alpha_{i_2,i_3,j_2,j_3}}{i_1! i_2! i_3! j_1! j_2! j_3!}\right | \times \notag \\
&&\sqrt{(i_1+\cdots+j_3)!} \sqrt{(a_1+\cdots+b_3)!}  \left| \frac{\beta_{a_1}\beta_{b_1}\alpha_{a_2,a_3,b_2,b_3}}{a_1! a_2! a_3! b_1! b_2! b_3!}\right |\times  ( \sqrt{\epsilon_1} )^{i_1+\cdots+ j_3+a_1+\cdots+ b_3} \\
& \leq & \Bigg ( \sum_{i_1,\ldots ,j_3, a_1\ldots, b_3} \left |\frac{\beta_{i_1}\beta_{j_1}\alpha_{i_2,i_3,j_2,j_3}}{i_1! i_2! i_3! j_1! j_2! j_3!}\right |^2  (i_1+\cdots+j_3)!  ( \sqrt{\epsilon_1} )^{i_1+\cdots+ j_3+a_1+\cdots+ b_3}\Bigg)^{1/2}\times \\
&& \times \Bigg( \sum_{i_1,\ldots ,j_3, a_1\ldots, b_3}\left| \frac{\beta_{a_1}\beta_{b_1}\alpha_{a_2,a_3,b_2,b_3}}{a_1! a_2! a_3! b_1! b_2! b_3!}\right |^2 (a_1+\cdots+b_3)!( \sqrt{\epsilon_1} )^{i_1+\cdots+ j_3+a_1+\cdots+ b_3}\Bigg)^{1/2}
\\ & =&  \sum_{i_1,\ldots ,j_3, a_1\ldots, b_3} \left |\frac{\beta_{i_1}\beta_{j_1}\alpha_{i_2,i_3,j_2,j_3}}{i_1! i_2! i_3! j_1! j_2! j_3!}\right |^2  (i_1+\cdots+j_3)!  ( \sqrt{\epsilon_1} )^{i_1+\cdots+ j_3+a_1+\cdots+ b_3} < \infty,
\end{eqnarray*}
where: (a) the third inequality follows by applying the Cauchy-Schwarz inequality to the symmetric finite measure $\mu$ on $\mathbb{N}^{12}$ such that
$$
\mu\{ (k_1,...,k_{12} )  \} =    (\sqrt{\epsilon_1} )^{k_1+\cdots+ k_{12}}, 
$$
and, (b) writing $m = m(i_1,...,j_3) :=  i_1+\cdots +j_3$ for every $i_1,...,j_3$, the finiteness of the last sum is a consequence of the standard estimate
$$
\frac{ (i_1+\cdots +j_3)!}{i_1! i_2! i_3! j_1! j_2! j_3!} \leq \sum_{\substack{k_1,...,k_6\geq 0 \\ k_1+\cdots+k_6 = m} } \frac{ m!}{k_1! \cdots k_6!} = 6^m = 6^{i_1+\cdots +j_3},
$$
as well as of the fact that the mapping 
$$
(i_1,...,j_3)\mapsto \frac{\beta^2_{i_1}\beta^2_{j_1}\alpha^2_{i_2,i_3,j_2,j_3}}{i_1! i_2! i_3! j_1! j_2! j_3!} 
$$
is bounded, and $6\sqrt{\eps_1}<1$ by assumption. This concludes the proof.
}
\end{proof}

\section{End of the Proof of Theorem \ref{t:main}}\label{proofs main sec}

\subsection{Proof of Part 2}

From Lemma \ref{berry's cancellation}, for $n\in S$ the chaotic expansion for $I_n$ is
$$
I_n = \E[I_n] + \sum_{q\ge 2} I_n[2q],
$$
where $I_n[2q]$ is given in \paref{proiezione q}. Proposition \ref{varianze chaos},  Proposition \ref{asymptotic} together with Lemma \ref{lemma BB} immediately conclude the proof, once we recall that, by orthogonality of different Wiener chaoses
$$
\Var(I_{n}) = \Var(I_{n}[4]) + \sum_{q\ge 3} \Var(I_{n}[2q]).
$$

\subsection{Proof of Part 4}

Part 2 of Theorem \ref{t:main} yields that, as $\mathcal{N}_{n_j}\to +\infty$,
$$
\frac{I_{n_j}-\E[I_{n_j}]}{\sqrt{\Var(I_{n_j})}} = \frac{I_{n_j}[4]}{\sqrt{\Var(I_{n_j}[4])}} + o_\P(1).
$$
Proposition \ref{limite 4} hence allows to conclude the proof.

\section{Some technical computations}\label{appendix}

\subsection{Technical proofs}

\begin{proof}[Proof of Lemma \ref{alpha piccoli}]
We have
\begin{eqnarray*}
\alpha_{0,0,0,0}&=&\mathbb{E} [|XW-YV|] \\ &=& \frac{1}{(2\pi)^2} \left( \int_0^\infty \rho^2 e^{-\rho^2/2} d\rho\right)^2 \int_0^{2\pi}\int_0^{2\pi} | \sin\theta \cos \theta' - \sin \theta'\cos\theta| d\theta d\theta'\\
&=& \frac{1}{(2\pi)^2} \left( \int_0^\infty \rho^2 e^{-\rho^2/2} d\rho\right)^2 \int_0^{2\pi}\int_0^{2\pi} | \sin( \theta  - \theta') | d\theta d\theta' = 1.
\end{eqnarray*}
Setting $Z$ to be any of the variables $X,Y,V,W$ and $\varphi_Z(u)$ to be $\cos(u)$ if $Z=X,V$, or $\sin(u)$ if $Z = Y,W$, we have that
\begin{eqnarray*}
&&\mathbb{E} [|XW-YV| H_2(Z) ]\\
&=& \frac{1}{(2\pi)^2}  \int_0^\infty \rho^2 e^{-\rho^2/2} d\rho \int_0^\infty \gamma^4 e^{-\gamma^2/2} d\gamma \int_0^{2\pi}\int_0^{2\pi} | \sin( \theta  - \theta')| \varphi_Z(\theta)^2 d\theta d\theta' -1= \frac12.
\end{eqnarray*}
As a result, we deduce that
$$
\alpha_{2,0,0,0}= \alpha_{0,2,0,0}=\alpha_{0,0,2,0}=\alpha_{0,0,0,2}=\frac12.
$$
Let us now concentrate on $\alpha_{4,0,0,0} $. We have
\begin{eqnarray*}
\alpha_{4,0,0,0} &=&\E[|XW-YV|H_4(X)]\\
&=&
\E[|XW-YV|X^4] - 6 \underbrace{\E[|XW-YV|X^2]}_{=\frac32\text{ from above}}+3\underbrace{\E[|XW-YV|]}_{=1}.
\end{eqnarray*}
Thus, it remains to calculate
\begin{eqnarray*}
&&\E[|XW-YV|X^4]\\
&=&\frac{1}{(2\pi)^2}\int_{\mathbb R^4}|xw-yv| x^4 \e^{-x^2/2}\e^{-y^2/2}\e^{-v^2/2}\e^{-w^2/2}\,dxdydvdw\\
&=&\frac{1}{2\pi}\underbrace{ \int_0^{2\pi}\cos^4\theta d\theta}_{=\frac{3\pi}{4}}\underbrace{\int_0^{2\pi} | \sin( \theta' - \theta)| d\theta'}_{=4}\underbrace{\frac{1}{\sqrt{2\pi}}  \int_0^\infty(\rho')^2  \e^{-(\rho')^2/2}d\rho'}_{=\frac12}\underbrace{\frac{1}{\sqrt{2\pi}}\int_0^\infty \rho^6  \e^{-\rho^2/2}\, d\rho}_{=\frac{15}{2}}\\
&=&\frac{45}{8}.
\end{eqnarray*}
Plugging into the previous expression, we deduce
$$
\alpha_{4,0,0,0} =-\frac{3}{8}.
$$
Since
$\int_0^{2\pi}\cos^4\theta d\theta = \int_0^{2\pi}\sin^4\theta d\theta$,
it is immediate to check that
$$
\alpha_{4,0,0,0}=\alpha_{0,4,0,0}=\alpha_{0,0,4,0}=\alpha_{0,0,0,4}.
$$
Let us now compute $\alpha_{2,2,0,0} $. We have
\begin{eqnarray*}
&&\alpha_{2,2,0,0} = \E[|XW-YV|H_2(X)H_2(Y)]\\
&=&\E[|XW-YV|X^2 Y^2]
- \underbrace{\E[|XW-YV|X^2]}_{=\frac32}
-\underbrace{\E[|XW-YV|Y^2] }_{=\frac32}+\underbrace{\E[
|XW-YV|]}_{=1},
\end{eqnarray*}
whereas
\begin{eqnarray*}
&&\E[|XW-YV|X^2 Y^2] \\
&=&\frac{1}{(2\pi)^2}\int_{\mathbb R^4}|xw-yv| x^2y^2 \e^{-x^2/2}\e^{-y^2/2}\e^{-v^2/2}\e^{-w^2/2}\,dxdydvdw\\
&=&\frac{1}{2\pi}\underbrace{ \int_0^{2\pi} \!\!\!\cos^2\theta \, \sin^2\theta d\theta}_{=\frac{\pi}{4}}\underbrace{\int_0^{2\pi} \!\!\!| \sin( \theta' - \theta)| d\theta'}_{=4}\underbrace{\frac{1}{\sqrt{2\pi}}  \int_0^\infty  \!\!\!(\rho')^2  \e^{-(\rho')^2/2}d\rho'}_{=\frac12}\underbrace{\frac{1}{\sqrt{2\pi}}\int_0^\infty \!\!\!\rho^6  \e^{-\rho^2/2}\, d\rho}_{=\frac{15}{2}}\\
&=&\frac{15}{8}.
\end{eqnarray*}
Therefore
$$
\alpha_{2,2,0,0}=-\frac{1}{8}.
$$
Similarly,
$$
\alpha_{0,0,2,2}=\alpha_{2,2,0,0}=-\frac{1}{8}.
$$
Now, let us compute $\alpha_{2,0,2,0} $. We can write
\begin{eqnarray*}
\alpha_{2,0,2,0} &=& \E[|XW-YV|H_2(X)H_2(V)]\\
&=&\E[|XW-YV|X^2 V^2] - \underbrace{\E[|XW-YV|X^2]}_{=\frac32}-\underbrace{\E[|XW-YV|V^2] }_{=\frac32}+\underbrace{\E[|XW-YV|]}_{=1},
\end{eqnarray*}
whereas
\begin{eqnarray*}
&&\E[|XW-YV|X^2V^2] \\
&=&\frac{1}{(2\pi)^2}\int_{\mathbb R^4}|xw-yz| x^2v^2 \e^{-x^2/2}\e^{-y^2/2}\e^{-v^2/2}\e^{-w^2/2}\,dxdydvdw\\
&=&\frac{1}{2\pi}\underbrace{\int_0^{2\pi} \!\!\!\cos^2\theta \,  d\theta\int_0^{2\pi} \!\!\!| \sin( \theta' - \theta)|\cos^2\theta' d\theta'}_{=\frac{5\pi}{3}}\underbrace{\frac{1}{\sqrt{2\pi}}  \int_0^\infty\!\!\!(\rho')^4  \e^{-(\rho')^2/2}d\rho'}_{=\frac32}\underbrace{\frac{1}{\sqrt{2\pi}}\int_0^\infty \!\!\!\rho^4  \e^{-\rho^2/2}\, d\rho}_{=\frac{3}{2}}\\
&=&\frac{15}{8}.
\end{eqnarray*}
Then,
$$
\alpha_{2,0,2,0}=\alpha_{0,2,0,2}-\frac{1}{8}.
$$
We also compute
\begin{eqnarray*}
\alpha_{2,0,0,2} &=& \E[|XW-YV|H_2(X)H_2(W)]\\
&=&\E[|XW-YV|X^2W^2]
- \underbrace{\E[|XW-YV|X^2 ]}_{=\frac32}
-\underbrace{\E[|XW-YV|W^2] }_{=\frac32}
+\underbrace{\E[|XW-YV|]}_{=1}.
\end{eqnarray*}
We have
\begin{eqnarray*}
&&\E[|XW-YV|X^2W^2] \\
&=&\frac{1}{(2\pi)^2}\int_{\mathbb R^4}|xw-yv| x^2w^2 \e^{-x^2/2}\e^{-y^2/2}\e^{-v^2/2}\e^{-w^2/2}\,dxdydvdw\\
&=&\frac{1}{2\pi}\underbrace{\int_0^{2\pi} \!\!\!\cos^2\theta \,  d\theta\int_0^{2\pi} | \sin( \theta' - \theta)|\sin^2\theta' d\theta'}_{=\frac{7\pi}{3}}\underbrace{\frac{1}{\sqrt{2\pi}}  \int_0^\infty\!\!\!(\rho')^4  \e^{-(\rho')^2/2}d\rho'}_{=\frac32}\underbrace{\frac{1}{\sqrt{2\pi}}\int_0^\infty \!\!\!\rho^4  \e^{-\rho^2/2}\, d\rho}_{=\frac{3}{2}}\\
&=&\frac{21}{8},
\end{eqnarray*}
so that
$$
\alpha_{2,0,0,2}=\alpha_{0,2,2,0}=\frac{5}{8}.
$$
Finally, let us consider the case where $a=b=c=d=1$.
We have
\begin{eqnarray*}
&&\alpha_{1,1,1,1}=\E[|XW-YV|XYVW] \\
&=&\frac{1}{(2\pi)^2}\int_{\mathbb R^4}|xw-yv| xyvw \e^{-x^2/2}\e^{-y^2/2}\e^{-v^2/2}\e^{-w^2/2}\,dxdydvdw\\
&=&\frac{1}{2\pi}\underbrace{\int_{[0,2\pi]^2} | \sin( \theta' - \theta)|\cos\theta\cos\theta'\sin\theta\sin\theta' d\theta d\theta'}_{=-\frac{\pi}{3}}\underbrace{\frac{1}{\sqrt{2\pi}}  \int_0^\infty(\rho')^4  \e^{-(\rho')^2/2}d\rho'}_{=\frac32}\\
&&\hskip8.5cm\times\underbrace{\frac{1}{\sqrt{2\pi}}\int_0^\infty \rho^4  \e^{-\rho^2/2}\, d\rho}_{=\frac{3}{2}}\\
&=&-\frac{3}{8}.
\end{eqnarray*}
\end{proof}

\subsection{Proof of Lemma \ref{lemma infinito}}

\noindent{\it Proof of (i)}. We have

\begin{eqnarray*}
&&\int_{\T} H_4(T_n(x))\,dx = \int_{\T}  (T_n(x)^4 - 6T_n(x)^2+3)\,dx\\
&=&\frac{1}{\mathcal N_n^2}\!\sum_{\lambda, \lambda',\lambda'',\lambda'''\in\Lambda_n} \!\!\!\!\!\!a_\lambda \overline{a_{\lambda'}}\, a_{\lambda''} \overline{{a}_{\lambda'''}}\!\int \!\e_{\lambda - \lambda'+\lambda'' -\lambda'''}(x)\,dx -\frac{6}{\mathcal N_n}\sum_{\lambda, \lambda'\in\Lambda_n} \!\!\!a_\lambda \overline{a_{\lambda'}}\int \e_{\lambda - \lambda'}(x)\,dx +3\\
&=&\frac{1}{\mathcal N_n^2}\sum_{\lambda,\lambda''} |a_\lambda|^2
|a_{\lambda''}|^2
+\frac{1}{\mathcal N_n^2}\sum_{\lambda} |a_\lambda|^4
+\frac{2}{\mathcal N_n^2}\sum_{\lambda\neq \pm\lambda'} |a_\lambda|^2 |a_{\lambda'}|^2
 -\frac{6}{\mathcal N_n}\sum_{\lambda} |a_\lambda|^2 +3\\
&=&\frac{3}{\mathcal N_n^2}\sum_{\lambda,\lambda''} (|a_\lambda|^2-1)
(|a_{\lambda''}|^2-1)
-\frac{3}{\mathcal N_n^2}\sum_{\lambda} |a_\lambda|^4
=\frac{3}{\mathcal N_n} W{}{(n)}^2 -\frac{3}{\mathcal N_n^2}\sum_{\lambda} |a_\lambda|^4.
\end{eqnarray*}
Since $\frac1{\mathcal N_n}\sum_{\lambda} |a_\lambda|^4\to 2$ by the law of large numbers, the claim $(i)$ follows.

\bigskip
\newpage

\noindent{\it Proof of (ii)}. We have

\begin{eqnarray*}
&&\int_{\T} H_4(\widetilde{\partial}_j T_n(x))\,dx = \int_{\T}  (\widetilde{\partial}_j T_n(x)^4 - 6\,\widetilde{\partial}_j T_n(x)^2+3)\,dx\\
&=&\frac{4}{n^2\mathcal N_n^2}\sum_{\lambda, \lambda',\lambda'',\lambda'''\in\Lambda_n} \lambda_j \lambda'_j\lambda''_j\lambda'''_j\,a_\lambda \overline{a_{\lambda'}}\, a_{\lambda''} \overline{{a}_{\lambda'''}}\int \e_{\lambda - \lambda'+\lambda'' -\lambda'''}(x)\,dx \\
&&-\frac{12}{n\mathcal N_n}\sum_{\lambda, \lambda'\in\Lambda_n}  \lambda_j \lambda'_j\,a_\lambda \overline{a_{\lambda'}}\int \e_{\lambda - \lambda'}(x)\,dx +3\\
&=&\frac{4}{n^2\mathcal N_n^2}\sum_{\lambda,\lambda''} \lambda_j^2{\lambda''_j}^2|a_\lambda|^2
|a_{\lambda''}|^2
+\frac{4}{n^2\mathcal\mathcal N_n^2}\sum_{\lambda} \lambda_j^4 |a_\lambda|^4
+\frac{8}{n^2\mathcal\mathcal N_n^2}\sum_{\lambda\neq \pm\lambda'} \lambda_j^2{\lambda'_j}^2|a_\lambda|^2 |a_{\lambda'}|^2
\\
&& -\frac{12}{n\mathcal N_n}\sum_{\lambda}\lambda_j^2 |a_\lambda|^2 +3
\end{eqnarray*}
\begin{eqnarray*}
&=&\frac{12}{n^2\mathcal N_n^2}\sum_{\lambda,\lambda''} \lambda_j^2{\lambda''_j}^2|a_\lambda|^2
|a_{\lambda''}|^2
-\frac{12}{n^2\mathcal\mathcal N_n^2}\sum_{\lambda} \lambda_j^4 |a_\lambda|^4
-\frac{12}{n\mathcal N_n}\sum_{\lambda} \lambda_j^2|a_\lambda|^2 +3\\
&=&\frac{12}{n^2\mathcal N_n^2}\sum_{\lambda,\lambda''}  \lambda_j^2{\lambda''_j}^2(|a_\lambda|^2-1)
(|a_{\lambda''}|^2-1)
-\frac{12}{n^2\mathcal\mathcal N_n^2}\sum_{\lambda} \lambda_j^4 |a_\lambda|^4\\
&=&\frac{12}{\mathcal N_n} W_j{}{(n)}^2 -\frac{12}{n^2\mathcal\mathcal N_n^2}\sum_{\lambda} \lambda_j^4 |a_\lambda|^4 .
\end{eqnarray*}
Since $\frac1{n^2\mathcal N_n}\sum_{\lambda} \lambda_j^4 |a_\lambda|^4\to \frac14(3+\widehat{\mu}_\infty(4))$ by the law of large numbers, the claim $(ii)$ follows.

\bigskip

\noindent{\it Proof of (iii)}. We have

\begin{eqnarray*}
&&\int_{\T} H_2(T_n(x))\big(H_2(\widetilde{\partial}_1 T_n(x)+H_2(\widetilde{\partial}_2 T_n(x))\big)\,dx
\\
&=&\int_{\T} \big(T_n(x)^2\widetilde{\partial}_1 T_n(x)^2+T_n(x)^2\widetilde{\partial}_2 T_n(x)^2
-2T_n(x)^2-\widetilde{\partial}_1 T_n(x)^2-\widetilde{\partial}_2 T_n(x)^2+2
\big)\,dx
\\
&=&\frac{2}{n\mathcal N_n^2}\sum_{\lambda, \lambda',\lambda'',\lambda'''\in\Lambda_n} (\lambda''_1\lambda'''_1+\lambda''_2\lambda'''_2)\,a_\lambda \overline{a_{\lambda'}}\, a_{\lambda''} \overline{{a}_{\lambda'''}}\int \e_{\lambda - \lambda'+\lambda'' -\lambda'''}(x)\,dx \\
&&-\frac{2}{\mathcal N_n}\sum_{\lambda, \lambda'\in\Lambda_n}  a_\lambda \overline{a_{\lambda'}}\int \e_{\lambda - \lambda'}(x)\,dx
-\frac{2}{n\mathcal N_n}\sum_{\lambda, \lambda'\in\Lambda_n} (\lambda_1\lambda'_1+\lambda_2\lambda'_2) a_\lambda \overline{a_{\lambda'}}\int \e_{\lambda - \lambda'}(x)\,dx
+2\\
&=&\frac{2}{\mathcal N_n^2}\sum_{\lambda, \lambda''} |a_\lambda|^2 | a_{\lambda''} |^2
- \frac{2}{\mathcal N_n^2}\sum_{\lambda} |a_\lambda|^4
+\frac{4}{n\mathcal N_n^2}\sum_{\lambda\neq \pm\lambda'} (\lambda_1\lambda'_1+\lambda_2\lambda'_2)|a_\lambda|^2 | a_{\lambda'} |^2\\
&&-\frac{4}{\mathcal N_n}\sum_{\lambda}  |a_\lambda|^2 +2\\
&=&\frac{2}{\mathcal N_n^2}\sum_{\lambda, \lambda''} (|a_\lambda|^2 -1)(| a_{\lambda''} |^2 -1)
- \frac{2}{\mathcal N_n^2}\sum_{\lambda} |a_\lambda|^4
=\frac{2}{\mathcal N_n}\left\{W{}{(n)}^2 - \frac{1}{\mathcal N_n}\sum_{\lambda} |a_\lambda|^4 \right\}.
\end{eqnarray*}
Since $\frac1{\mathcal N_n}\sum_{\lambda} |a_\lambda|^4\to 2$ by the law of large numbers, the claim $(iii)$ follows.

\bigskip\newpage
\noindent{\it Proof of (iv)}. We have

\begin{eqnarray*}
&&\int_{\T} H_2(\widetilde{\partial}_1 T_n(x))H_2(\widetilde{\partial}_2 T_n(x))\,dx
=\int_{\T} \big(\widetilde{\partial}_1 T_n(x)^2\widetilde{\partial}_2 T_n(x)^2-\widetilde{\partial}_1 T_n(x)^2-\widetilde{\partial}_2 T_n(x)^2+1)\,dx
\\
&=&\frac{4}{n^2\mathcal N_n^2}\sum_{\lambda, \lambda',\lambda'',\lambda'''\in\Lambda_n} \lambda_1\lambda'_1\lambda_2''\lambda'''_2\,a_\lambda \overline{a_{\lambda'}}\, a_{\lambda''} \overline{{a}_{\lambda'''}}\int \e_{\lambda - \lambda'+\lambda'' -\lambda'''}(x)\,dx \\
&&-\frac{2}{n\mathcal N_n}\sum_{\lambda, \lambda'\in\Lambda_n}  \lambda_1\lambda'_1a_\lambda \overline{a_{\lambda'}}\int \e_{\lambda - \lambda'}(x)\,dx
-\frac{2}{n\mathcal N_n}\sum_{\lambda, \lambda'\in\Lambda_n}  \lambda_2\lambda'_2a_\lambda \overline{a_{\lambda'}}\int \e_{\lambda - \lambda'}(x)\,dx
+1
\\
&=&\frac{4}{n^2\mathcal N_n^2}\sum_{\lambda, \lambda''} \lambda_1^2{\lambda_2''}^2|a_\lambda|^2 | a_{\lambda''} |^2
+ \frac{4}{n^2\mathcal N_n^2}\sum_{\lambda} \lambda_1^2\lambda_2^2|a_\lambda|^4
+\frac{8}{n^2\mathcal N_n^2}\sum_{\lambda\neq \pm\lambda'} \lambda_1\lambda_2\lambda'_1\lambda'_2|a_\lambda|^2 | a_{\lambda'} |^2\\
&&-\frac{2}{\mathcal N_n}\sum_{\lambda} |a_\lambda|^2 +1\\
\end{eqnarray*}
\begin{eqnarray*}
&=&\frac{4}{n^2\mathcal N_n^2}\sum_{\lambda, \lambda''} \lambda_1^2{\lambda_2''}^2(|a_\lambda|^2-1) (| a_{\lambda''} |^2-1)
- \frac{12}{n^2\mathcal N_n^2}\sum_{\lambda} \lambda_1^2\lambda_2^2|a_\lambda|^4
+\frac{8}{n^2\mathcal N_n^2}\left(\sum_{\lambda} \lambda_1\lambda_2|a_\lambda|^2 \right)^2\\
&=&\frac{4}{\mathcal N_n}\left\{W_1{}{(n)}W_2{}{(n)} +2W_{1,2}{}{(n)}^2 - \frac{3}{n^2\mathcal N_n}\sum_{\lambda} \lambda_1^2\lambda_2^2|a_\lambda|^4
\right\}.
\end{eqnarray*}
Since $\frac{1}{n^2\mathcal N_n}\sum_{\lambda} \lambda_1^2\lambda_2^2|a_\lambda|^4 \to \frac14(1-\widehat{\mu}_\infty(4))$ by the law of large numbers, the claim $(iv)$ follows.

\bigskip

\noindent{\it Proof of (v)}. We have

\begin{eqnarray*}
&&\int_{\T} H_2(T_n(x))H_2(\widehat T_n(x))\,dx =\int_{\T} \big(T_n(x)^2 \widehat T_n(x)^2- T_n(x)^2- \widehat T_n(x)^2 + 1\big)dx\\
&=&\frac{1}{\mathcal N_n^2}\sum_{\lambda, \lambda',\lambda'',\lambda'''\in\Lambda_n} a_\lambda \overline{a_{\lambda'}}\, \widehat{a}_{\lambda''} \overline{\widehat{a}_{\lambda'''}}\int \e_{\lambda - \lambda'+\lambda'' -\lambda'''}(x)\,dx
-\frac{1}{\mathcal N_n}\sum_{\lambda, \lambda'\in\Lambda_n}  a_\lambda \overline{a_{\lambda'}}\int \e_{\lambda - \lambda'}(x)\,dx \\
&&-\frac{1}{\mathcal N_n}\sum_{\lambda, \lambda'\in\Lambda_n}  \widehat{a}_\lambda \overline{\widehat{a}_{\lambda'}}\int \e_{\lambda - \lambda'}(x)\,dx
+1\\
&=&\frac{1}{\mathcal N_n^2}\sum_{\lambda,\lambda'} (|a_\lambda|^2-1)  (|\widehat{a}_{\lambda'}|^2-1)
+\frac{1}{\mathcal N_n^2}\sum_{\lambda} a_\lambda^2\, \overline{\widehat{a}_{\lambda}}^2
+\frac{2}{\mathcal N_n^2}\sum_{\lambda\neq \pm \lambda'} a_\lambda\, \overline{a_{\lambda'}}\,\overline{\widehat{a}_{\lambda}}
\widehat{a}_{\lambda'}\\
&=&\frac{1}{\mathcal N_n^2}\sum_{\lambda,\lambda'} (|a_\lambda|^2-1)  (|\widehat{a}_{\lambda'}|^2-1)
-\frac{1}{\mathcal N_n^2}\sum_{\lambda} a_\lambda^2\, \overline{\widehat{a}_{\lambda}}^2
+\frac{2}{\mathcal N_n^2}\left(\sum_{\lambda} a_\lambda\, \overline{\widehat{a}_{\lambda}}\right)^2 - \frac{2}{\mathcal N_n^2}\sum_{\lambda} |a_\lambda|^2|\widehat{a}_{\lambda} |^2\\
&=&\frac{1}{\mathcal N_n}\left\{W{}{(n)}\widehat{W}{}{(n)}+2M{}{(n)}^2-\frac{1}{\mathcal N_n}\sum_{\lambda} a_\lambda^2\, \overline{\widehat{a}_{\lambda}}^2 - \frac{2}{\mathcal N_n}\sum_{\lambda} |a_\lambda|^2|\widehat{a}_{\lambda} |^2\right\}.
\end{eqnarray*}
Since $\frac{1}{\mathcal N_n}\sum_{\lambda}a_\lambda^2\, \overline{\widehat{a}_{\lambda}}^2 \to 0$
and $\frac{1}{\mathcal N_n}\sum_{\lambda} |a_\lambda|^2|\widehat{a}_{\lambda} |^2 \to 1$ by the law of large numbers, the claim $(v)$ follows.

\bigskip
\newpage

\noindent{\it Proof of (vi)}. We have

\begin{eqnarray*}
&&\int_{\T} H_2(T_n(x))H_2(\widetilde{\partial}_j \widehat T_n(x))\,dx=\int_{\T} \big(T_n(x)^2 \widetilde{\partial}_j\widehat T_n(x)^2\,-T_n(x)^2\,- \widetilde{\partial}_j\widehat T_n(x)^2\, + 1\big)dx\\
&=&\frac{2}{n\mathcal N_n^2}\sum_{\lambda, \lambda',\lambda'',\lambda'''} \lambda_j'' \lambda_j''' a_\lambda \overline{a_{\lambda'}}\, \widehat{a}_{\lambda''} \overline{\widehat{a}_{\lambda'''}}\int \e_{\lambda - \lambda'+\lambda'' -\lambda'''}(x)\,dx -\frac{1}{\mathcal N_n}\sum_\lambda |a_\lambda|^2 \\
&&-\frac{2}{n\mathcal N_n}\sum_\lambda \lambda_j^2 |\widehat{a}_\lambda|^2 +1\\
&=&\frac{2}{n\mathcal N_n^2}\sum_{\lambda, \lambda''} {\lambda_j''}^2 |a_\lambda|^2  |\widehat{a}_{\lambda''}|^2-\frac{2}{n\mathcal N_n^2}\sum_{\lambda} \lambda_j^2 a_\lambda^2 {\widehat{a}_{\lambda}}^2
+ \frac{2}{n\mathcal N_n^2}\sum_{\lambda\neq  \pm\lambda'} \lambda_j \lambda_j' a_\lambda \overline{a_{\lambda'}}\, \widehat{a}_{\lambda'} \overline{\widehat{a}_{\lambda}}\\
&&-\frac{1}{\mathcal N_n}\sum_\lambda |a_\lambda|^2 -\frac{2}{n\mathcal N_n}\sum_\lambda \lambda_j^2 |\widehat{a}_\lambda|^2 +1.
\end{eqnarray*}
Thus,
\begin{eqnarray*}
&&\int_{\T} H_2(T_n(x))\big(H_2(\widetilde{\partial}_1 \widehat T_n(x))+H_2(\widetilde{\partial}_2 \widehat T_n(x))\big)\,dx\\
&=&\frac{2}{\mathcal N_n^2}\sum_{\lambda, \lambda''}  (|a_\lambda|^2-1)  (|\widehat{a}_{\lambda''}|^2-1)-\frac{2}{\mathcal N_n^2}\sum_{\lambda} a_\lambda^2 {\widehat{a}_{\lambda}}^2
+ \frac{2}{n\mathcal N_n^2}\sum_{\lambda\neq  \pm\lambda'} (\lambda_1 \lambda_1' + \lambda_2 \lambda_2' )a_\lambda \overline{a_{\lambda'}}\, \widehat{a}_{\lambda'} \overline{\widehat{a}_{\lambda}}\\
&=&\frac{2}{\mathcal N_n^2}\sum_{\lambda, \lambda''}  (|a_\lambda|^2-1)  (|\widehat{a}_{\lambda''}|^2-1)
- \frac{2}{\mathcal N_n^2}\sum_{\lambda} |a_\lambda |^2\, |\widehat{a}_{\lambda}|^2
+ \frac{2}{n\mathcal N_n^2}\sum_{\lambda,\lambda'} (\lambda_1 \lambda_1' + \lambda_2 \lambda_2' )a_\lambda \overline{a_{\lambda'}}\, \widehat{a}_{\lambda'} \overline{\widehat{a}_{\lambda}}\\
&=&\frac{2}{\mathcal N_n}\left\{ W{}{(n)}\widehat{W}{}{(n)}+M_1{}{(n)}^2+M_2{}{(n)}^2-\frac{1}{\mathcal N_n}\sum_{\lambda} |a_\lambda |^2\, |\widehat{a}_{\lambda}|^2  \right\}.
\end{eqnarray*}
Since $\frac{1}{\mathcal N_n}\sum_{\lambda} |a_\lambda |^2\, |\widehat{a}_{\lambda}|^2  \to 1$
 by the law of large numbers, the claim $(vi)$ follows.

 \bigskip

\noindent{\it Proof of (vii)}. We have

\begin{eqnarray*}
&&\int_{\T}  H_2(\widetilde{\partial}_\ell T_n(x))H_2(\widetilde{\partial}_j \widehat T_n(x))\,dx\\
&=&\frac{4}{n^2\mathcal N_n^2}\sum_{\lambda, \lambda',\lambda'',\lambda'''} \lambda_\ell \lambda_\ell'\lambda_j'' \lambda_j''' a_\lambda \overline{a_{\lambda'}}\, \widehat{a}_{\lambda''} \overline{\widehat{a}_{\lambda'''}}\int \e_{\lambda - \lambda'+\lambda'' -\lambda'''}(x)\,dx\\
&&-\frac{2}{n\mathcal N_n}\sum_\lambda \lambda_\ell^2|a_\lambda|^2 -\frac{2}{n\mathcal N_n}\sum_\lambda \lambda_j^2 |\widehat{a}_\lambda|^2 +1\\
&=&\frac{4}{n^2\mathcal N_n^2}\sum_{\lambda,\lambda''} \lambda_\ell^2 {\lambda_j''}^2 |a_\lambda|^2\, |\widehat{a}_{\lambda''}|^2
+\frac{4}{n^2\mathcal N_n^2}\sum_{\lambda} \lambda_\ell^2 \lambda_j^2
a_\lambda^2 \overline{\widehat{a}_{\lambda}}^2
+\frac{8}{n^2\mathcal N_n^2}\sum_{\lambda\neq \pm \lambda'} \lambda_\ell \lambda_\ell'\lambda_j \lambda_j' a_\lambda \overline{a_{\lambda'}}\, \overline{\widehat{a}_{\lambda}} \widehat{a}_{\lambda'}
\\
&&-\frac{2}{n\mathcal N_n}\sum_\lambda \lambda_\ell^2|a_\lambda|^2 -\frac{2}{n\mathcal N_n}\sum_\lambda \lambda_j^2 |\widehat{a}_\lambda|^2 +1\\
&=&\frac{4}{n^2\mathcal N_n^2}\sum_{\lambda,\lambda''} \lambda_\ell^2 {\lambda_j''}^2 (|a_\lambda|^2-1)\, (|\widehat{a}_{\lambda''}|^2-1)
-\frac{4}{n^2\mathcal N_n^2}\sum_{\lambda} \lambda_\ell^2 \lambda_j^2
a_\lambda^2 \overline{\widehat{a}_{\lambda}}^2\\
&&+\frac{8}{n^2\mathcal N_n^2}\sum_{\lambda, \lambda'} \lambda_\ell \lambda_\ell'\lambda_j \lambda_j' a_\lambda \overline{a_{\lambda'}}\, \overline{\widehat{a}_{\lambda}} \widehat{a}_{\lambda'}
-\frac{8}{n^2\mathcal N_n^2}\sum_{\lambda} \lambda_\ell^2\lambda_j^2 |a_\lambda|^2\,|\widehat{a}_{\lambda}|^2
\\
\end{eqnarray*}
\begin{eqnarray*}
&=&\frac{4}{\mathcal N_n}\left\{ W_l{}{(n)}\widehat{W}_j{}{(n)}+2M_{\ell,j}{}{(n)}^2+
\frac{1}{n^2\mathcal N_n}\sum_{\lambda} \lambda_{\ell}^2\lambda_j^2\,a_\lambda ^2\, \widehat{a}_{\lambda}^2-\frac{2}{n^2\mathcal N_n}\sum_{\lambda} \lambda_{\ell}^2\lambda_j^2|a_\lambda |^2\, |\widehat{a}_{\lambda}|^2   \right\}.
\end{eqnarray*}
Since $\frac{1}{n^2\mathcal N_n}\sum_{\lambda} \lambda_{\ell}^2\lambda_j^2\,a_\lambda ^2\, \widehat{a}_{\lambda}^2\to 0$ and $$\frac{1}{n^2\mathcal N_n}\sum_{\lambda} \lambda_{\ell}^2\lambda_j^2|a_\lambda |^2\, |\widehat{a}_{\lambda}|^2  \to \frac18(1-\widehat{\mu}_\infty(4)){\bf 1}_{\{l\neq j\}}+\frac18(3+\widehat{\mu}_\infty(4)){\bf 1}_{\{l= j\}}$$
 by the law of large numbers, the claim $(vii)$ follows.

 \bigskip

\noindent{\it Proof of (viii)}. We have

\begin{eqnarray*}
&&\int_{\T} \widetilde{\partial}_1 T_n(x)\widetilde{\partial}_2 T_n(x)\widetilde{\partial}_1 \widehat T_n(x)\widetilde{\partial}_2 \widehat T_n(x)\,dx\\
&=&\frac{4}{n^2\mathcal N_n^2}\sum_{\lambda, \lambda',\lambda'',\lambda'''} \lambda_1 \lambda_2'\lambda_1'' \lambda_2''' a_\lambda \overline{a_{\lambda'}}\, \widehat{a}_{\lambda''} \overline{\widehat{a}_{\lambda'''}}\int \e_{\lambda - \lambda'+\lambda'' -\lambda'''}(x)\,dx \\
&=&\frac{4}{n^2\mathcal N_n^2}\sum_{\lambda, \lambda''} \lambda_1\lambda_2 {\lambda_1''}{\lambda_2''} |a_\lambda |^2|\widehat{a}_{\lambda''}|^2
+\frac{4}{n^2\mathcal N_n^2}\sum_{\lambda} \lambda_1^2 \lambda_2^2 a_\lambda^2  \overline{\widehat{a}_{\lambda}}^2
+\frac{4}{n^2\mathcal N_n^2}\sum_{\lambda\neq\pm \lambda'} \lambda_1^2 {\lambda_2'}^2 a_\lambda \overline{a_{\lambda'}}\, \widehat{a}_{\lambda'} \overline{\widehat{a}_{\lambda}}\\
&&
+\frac{4}{n^2\mathcal N_n^2}\sum_{\lambda\neq\pm \lambda'} \lambda_1 \lambda_2\lambda_1' \lambda_2' a_\lambda \overline{a_{\lambda'}}\, \widehat{a}_{\lambda'} \overline{\widehat{a}_{\lambda}}\\
&=&\frac{4}{n^2\mathcal N_n^2}\sum_{\lambda, \lambda''} \lambda_1\lambda_2 {\lambda_1''}{\lambda_2''} |a_\lambda |^2|\widehat{a}_{\lambda''}|^2
+\frac{4}{n^2\mathcal N_n^2}\sum_{\lambda, \lambda'} \lambda_1^2 {\lambda_2'}^2 a_\lambda \overline{a_{\lambda'}}\, \widehat{a}_{\lambda'} \overline{\widehat{a}_{\lambda}}\\
&&+\frac{4}{n^2\mathcal N_n^2}\sum_{\lambda, \lambda'} \lambda_1 \lambda_2\lambda_1' \lambda_2' a_\lambda \overline{a_{\lambda'}}\, \widehat{a}_{\lambda'} \overline{\widehat{a}_{\lambda}}
-\frac{8}{n^2\mathcal N_n^2}\sum_{\lambda} \lambda_1^2 \lambda_2^2 |a_\lambda|^2\, |\widehat{a}_{\lambda}|^2
-\frac{4}{n^2\mathcal N_n^2}\sum_{\lambda} \lambda_1^2 \lambda_2^2 a_\lambda^2\overline{\widehat{a}_{\lambda}}^2\\
&=&\frac{4}{\mathcal N_n}
\left\{
W_{1,2}{}{(n)}\widehat{W}_{1,2}{}{(n)}+M_{11}{}{(n)}M_{22}{}{(n)}+M_{12}{}{(n)}^2 - \frac{2}{n^2\mathcal N_n}\sum_{\lambda} \lambda_1^2 \lambda_2^2 |a_\lambda|^2\, |\widehat{a}_{\lambda}|^2\right.\\
&&\left. -\frac{1}{n^2\mathcal N_n}\sum_{\lambda} \lambda_1^2 \lambda_2^2 a_\lambda^2\overline{\widehat{a}_{\lambda}}^2
\right\}.
\end{eqnarray*}
Since $\frac{1}{n^2\mathcal N_n}\sum_{\lambda} \lambda_{1}^2\lambda_2^2\,a_\lambda ^2\, \widehat{a}_{\lambda}^2\to 0$ and $\frac{1}{n^2\mathcal N_n}\sum_{\lambda} \lambda_{1}^2\lambda_2^2|a_\lambda |^2\, |\widehat{a}_{\lambda}|^2  \to \frac18(1-\widehat{\mu}_\infty(4))$
 by the law of large numbers, the claim $(viii)$ follows.


\subsection{Taylor expansions for the two-point correlation function}\label{taylor}

{The matrix
\begin{equation}\label{matrix omega}
\Omega_n({\bf x}) := \begin{pmatrix}&\frac{E_n}{2} - \frac{(\partial_1 r_n({\bf x}))^2}{1-r_n({\bf x})^2} & - \frac{\partial_1 r_n({\bf x}) \partial_2 r_n({\bf x})}{1-r_n({\bf x})^2}\\
&- \frac{\partial_1 r_n({\bf x}) \partial_2 r_n({\bf x})}{1-r_n({\bf x})^2} &\frac{E_n}{2} - \frac{(\partial_2 r_n({\bf x}))^2}{1-r_n({\bf x})^2}
\end{pmatrix}
\end{equation}
is the covariance matrix of the random vector $\nabla T_n(x)$ conditioned to $T_n(x)=T_n(0)=0$ (see \cite[Equation (24)]{KKW}).}

We will write $\Omega= \Omega_n({\bf x})$, $E=E_n$ and $r=r_n({\bf x})$ for brevity. The determinant of $\Omega$ is
$$
\det \Omega = \frac{E}{2}\left ( \frac{E}{2} - \frac{(\partial_1 r)^2 + (\partial_2 r)^2}{1-r^2} \right).
$$
It is easy to show that $$
\lim_{\|{\bf x}\|\to 0} \det \Omega({\bf x}) = 0.
$$
However, we need the speed of convergence to $0$ of $\det \Omega$. Hence, we will use a Taylor expansion argument around $0$. We denote ${\bf x}=(x,y)$.
\begin{lemma}\label{lemma taylor} As $\|x\|\to 0$, we have
$$\displaylines{
\det \Omega_n(x) = c E_n^3 \|{\bf x}\|^2 +E_n^4\,O( \|{\bf x}\|^4),
}$$
and hence
$$
\Psi_n(x) :=\frac{|\Omega(x)|}{1-r^2(x)}= cE_n^2 + E_n^3 O\left (\|x\|^2\right),
$$
where both $c>0$ and the constants in the `$O$' notation do not depend on $n$.
\end{lemma}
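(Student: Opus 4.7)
The plan is to Taylor-expand $r_n$ and $\nabla r_n$ about the origin to sufficient order, plug them into the identity
\begin{equation*}
\det\Omega_n(x)=\frac{E_n}{2}\left(\frac{E_n}{2}-\frac{\|\nabla r_n(x)\|^2}{1-r_n(x)^2}\right)
\end{equation*}
(obtained by direct $2\times 2$ determinant computation, since the off-diagonal square cancels against part of the diagonal product), and then extract the leading coefficient using the explicit quartic moment identities for $\Lambda_n$ already computed in the proof of Lemma \ref{CLT}. The key algebraic simplification will be Euler's identity applied to a single homogeneous quartic form $A_4$.

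First, expanding $\cos(2\pi\langle\lambda,x\rangle)$ through order four and using the second-moment identities $\sum_\lambda\lambda_j=0$, $\sum_\lambda\lambda_1\lambda_2=0$, $\sum_\lambda\lambda_j^2=n\mathcal{N}_n/2$ (immediate from the $\pi/2$-rotation and axis-reflection invariance of $\Lambda_n$), one gets
\begin{equation*}
r_n(x)=1-\tfrac{E_n}{4}\|x\|^2+A_4(x)+O(E_n^3\|x\|^6),\qquad A_4(x):=\frac{(2\pi)^4}{24\mathcal{N}_n}\sum_\lambda\langle\lambda,x\rangle^4,
\end{equation*}
where $A_4$ is homogeneous of degree four with $A_4=O(E_n^2\|x\|^4)$, and hence (differentiating termwise) $\nabla r_n(x)=-\tfrac{E_n}{2}x+\nabla A_4(x)+O(E_n^3\|x\|^5)$. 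Applying Euler's identity $x\cdot\nabla A_4=4A_4$ gives
\begin{equation*}
\|\nabla r_n(x)\|^2=\tfrac{E_n^2}{4}\|x\|^2-4E_n A_4(x)+O(E_n^4\|x\|^6),
\end{equation*}
while squaring yields $1-r_n(x)^2=\tfrac{E_n}{2}\|x\|^2-2A_4(x)-\tfrac{E_n^2}{16}\|x\|^4+O(E_n^3\|x\|^6)$. Substituting, the numerator simplifies (the leading $\tfrac{E_n^2}{4}\|x\|^2$ terms cancel) to $\tfrac{E_n}{2}(1-r_n^2)-\|\nabla r_n\|^2=3E_nA_4(x)-\tfrac{E_n^3}{32}\|x\|^4+O(E_n^4\|x\|^6)$.

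To make the leading coefficient explicit, I will substitute the closed form of $A_4$ coming from the fourth-moment identities already used in the proof of Lemma \ref{CLT}: namely $\sum_\lambda\lambda_1^3\lambda_2=\sum_\lambda\lambda_1\lambda_2^3=0$ and, via $\widehat{\mu}_n(4)=\tfrac{1}{n^2\mathcal{N}_n}\sum_\lambda(\lambda_1+i\lambda_2)^4$ together with $\sum_\lambda(\lambda_1^2+\lambda_2^2)^2=n^2\mathcal{N}_n$, $\sum_\lambda\lambda_1^2\lambda_2^2=\tfrac{n^2\mathcal{N}_n}{8}(1-\widehat{\mu}_n(4))$ and $\sum_\lambda\lambda_j^4=\tfrac{n^2\mathcal{N}_n}{8}(3+\widehat{\mu}_n(4))$. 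This gives
\begin{equation*}
A_4(x)=\frac{E_n^2}{192}\bigl[(3+\widehat{\mu}_n(4))\|x\|^4-8\widehat{\mu}_n(4)\,x_1^2x_2^2\bigr].
\end{equation*}
Dividing by $1-r_n(x)^2=\tfrac{E_n}{2}\|x\|^2(1+O(E_n\|x\|^2))$ and multiplying by $E_n/2$, one arrives at
\begin{equation*}
\det\Omega_n(x)=\frac{E_n^3\|x\|^2}{64}\Bigl[1+\widehat{\mu}_n(4)\Bigl(1-\tfrac{8x_1^2x_2^2}{\|x\|^4}\Bigr)\Bigr]+O(E_n^4\|x\|^4).
\end{equation*}
Since $|\widehat{\mu}_n(4)|\le1$ and $8x_1^2x_2^2/\|x\|^4\in[0,2]$ (by AM--GM), the bracketed factor lies in the bounded interval $[0,2]$ uniformly in $n$ and $x$; this yields the claimed expansion with an absolute constant (any $c\in(0,1/32]$ suffices as an upper-bound coefficient, which is what is needed to conclude $\Psi_n(x)=O(E_n^2)$ in the application).

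The main obstacle is the book-keeping: showing that the $\Theta(E_n^2\|x\|^2)$ contributions cancel in $\tfrac{E_n}{2}(1-r_n^2)-\|\nabla r_n\|^2$ without being swamped by the individual quartic monomials of $A_4$. Euler's identity is the pivotal tool here, because it collapses the cross term $2\,\nabla(-\tfrac{E_n}{4}\|x\|^2)\cdot\nabla A_4=-E_n\,x\cdot\nabla A_4$ arising in $\|\nabla r_n\|^2$ into the single scalar multiple $-4E_nA_4$, thereby reducing all quartic arithmetic to the one form $A_4$ and making the cancellation transparent.
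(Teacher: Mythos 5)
Your proof is correct and follows essentially the same route as the paper's: a fourth-order Taylor expansion of $r_n$ and $\nabla r_n$ at the origin, substitution into the explicit formula for $\det\Omega_n$, cancellation of the quadratic terms, and identification of the quartic coefficient through the lattice moment sums $\sum_\lambda\lambda_j^4$ and $\sum_\lambda\lambda_1^2\lambda_2^2$. Your packaging of the quartic terms into the single homogeneous form $A_4$ together with Euler's identity is a cleaner rendering of the same bookkeeping, and your closed-form leading coefficient $\frac{1}{64}\bigl[1+\widehat\mu_n(4)\bigl(1-8x_1^2x_2^2/\|x\|^4\bigr)\bigr]$ makes explicit --- correctly --- that the ``constant'' $c$ in the statement is really a bounded, direction- and $n$-dependent quantity in $[0,1/32]$, which suffices here because the lemma is only used to upper-bound $K_2$.
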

\noindent
\begin{proof}
Let us start with $r$.
$$\displaylines{
r({\bf x}) = r(0,0) + \frac{1}{2} \langle \text{Hess}_r(0,0) {\bf x}, {\bf x} \rangle +\cr +\frac{1}{4!} \partial_{1111} r(0,0) x^4
+ \frac{1}{4!} \partial_{2222} r(0,0) y^4 +\frac{1}{2! 2!} \partial_{1122} r(0,0) x^2 y^2 + o_n(\|{\bf x}\|^4).
}$$
We have
$$
\text{Hess}_r(0,0) =\begin{pmatrix} &-\frac{E}{2} &0\\
&0 &-\frac{E}{2}
\end{pmatrix}
$$
and moreover
$$\displaylines{
 \partial_{1111} r(0,0) = (2\pi)^4 n^2 \psi_n\cr
 \partial_{2222} r(0,0) = (2\pi)^4 n^2 \psi_n\cr
\partial_{1122}r(0,0) = (2\pi)^4 n^2 (1/2 - \psi_n),
}$$
where $$
\psi_n := \frac{1}{n^2 \mathcal N_n}\sum_{\lambda \in \Lambda_n} \lambda_1^4 =\frac{1}{n^2 \mathcal N_n}\sum_{\lambda \in \Lambda_n} \lambda_2^4.
$$
Therefore we can write, as $\|{\bf x}\|\to 0$,
$$\displaylines{
r({\bf x}) = 1 - \frac{E}{4}(x^2 + y^2)  + \frac{1}{4!}(2\pi)^4 n^2 \psi_n x^4
+ \frac{1}{4!} (2\pi)^4 n^2 \psi_n y^4 +\frac{1}{2! 2!} (2\pi)^4 n^2 (1/2 - \psi_n) x^2 y^2 + R_n^r.
}$$
{More precisely, the remainder $R_n^r $ is of the form
$$
R_n^r = O \left(\sup \|\partial^6 r_n\| \| {\bf x }\|^6 \right ).
$$
It is easy to check that
$$
\Big |\partial^6 r_n \Big | \le E_n^3
$$
and hence
$$
R_n^r = E_n^3 O \left( {\bf x }\|^6 \right ),
$$
where the constants involved in the 'O' notation do not depend on $n$. }
Analogously, we find that
$$\displaylines{
\partial r_1 ({\bf x}) = -\frac{E}{2}x + \frac{1}{3!}  (2\pi)^4 n^2 \psi_n (x^3+y^3) +\frac{1}{2}  (2\pi)^4 n^2 (1/2 - \psi_n)xy^2 + R_n^1
}$$
and
$$\displaylines{
\partial r_2 ({\bf x}) = -\frac{E}{2}y + \frac{1}{3!}  (2\pi)^4 n^2 \psi_n (x^3+y^3) +\frac{1}{2}  (2\pi)^4 n^2 (1/2 - \psi_n)x^2 y+ R_n^2.
}$$
Also here, the remainders $R_n^1 $ and $R_n^2$ are both of the form
$$
R_n^j \le \sup \|\partial^6 r_n\| \cdot O(\| {\bf x }\|^5).
$$
where also here the constants involved in the 'O' notation do not depend on $n$. Squaring previous Taylor expansions we hence get
$$\displaylines{
r^2({\bf x})= 1 +\left( \frac{E}{4}\right)^2 (x^2+y^2)^2 - \frac{E}{2}(x^2 + y^2) +\cr
+2\left( \frac{1}{4!}(2\pi)^4 n^2 \psi_n x^4
+ \frac{1}{4!} (2\pi)^4 n^2 \psi_n y^4 +\frac{1}{2! 2!} (2\pi)^4 n^2 (1/2 - \psi_n) x^2 y^2\right) + o_n(\|{\bf x}\|^4)=\cr
=1 - \frac{E}{2}(x^2+y^2) + f_n(x,y) +E_n^3 \cdot O(\|{\bf x}\|^6),
}$$
where $f_n(x,y)$ is defined as
$$
\left( \frac{E}{4}\right)^2 (x^2+y^2)^2
+2\left( \frac{1}{4!}(2\pi)^4 n^2 \psi_n x^4
+ \frac{1}{4!} (2\pi)^4 n^2 \psi_n y^4 +\frac{1}{2! 2!} (2\pi)^4 n^2 (1/2 - \psi_n) x^2 y^2\right).
$$
{Therefore,
\begin{equation}\label{uno meno err quadro}
1-r^2 = \frac{E}{2}(x^2+y^2) - f_n(x,y) +E_n^3 \cdot O(\|{\bf x}\|^6).
\end{equation}}
Let us now investigate the derivatives {}{$(\partial_i r) ^2$, $i=1,2$. Firstly,}
$$\displaylines{
(\partial_1 r) ^2 =\left( \frac{E}{2}\right)^2 x^2 - E x \left( \frac{1}{3!}  (2\pi)^4 n^2 \psi_n (x^3+y^3) +\frac{1}{2}  (2\pi)^4 n^2 (1/2 - \psi_n)xy^2  \right) + E_n \cdot E_n^3 O(\|{\bf x}\|^6)
}$$
where the constants involved in the 'O' notation still do not depend on $n$.
{}{Secondly,}
$$\displaylines{
(\partial_2 r) ^2 =\left( \frac{E}{2}\right)^2 y^2 - E y \left( \frac{1}{3!}  (2\pi)^4 n^2 \psi_n (x^3+y^3) +\frac{1}{2}  (2\pi)^4 n^2 (1/2 - \psi_n)x^2y  \right) + E_n \cdot E_n^3 O(\|{\bf x}\|^6).
}$$
For brevity, let us denote
$$
a_n(x,y):= - E x \left( \frac{1}{3!}  (2\pi)^4 n^2 \psi_n (x^3+y^3) +\frac{1}{2}  (2\pi)^4 n^2 (1/2 - \psi_n)xy^2  \right)
$$
and
$$
b_n(x,y):= - E y \left( \frac{1}{3!}  (2\pi)^4 n^2 \psi_n (x^3+y^3) +\frac{1}{2}  (2\pi)^4 n^2 (1/2 - \psi_n)x^2y  \right),
$$
so that
$$\displaylines{
(\partial_1 r) ^2 =\left( \frac{E}{2}\right)^2 x^2 +a_n(x,y) + E_n \cdot E_n^3 O(\|{\bf x}\|^6),
}$$
and moreover
$$\displaylines{
(\partial_2 r) ^2 =\left( \frac{E}{2}\right)^2 y^2 + b_n(x,y) + E_n \cdot E_n^3 O(\|{\bf x}\|^6).
}$$
Thus we have, for fixed $n$, as $\|{\bf x}\|\to 0$, using also \paref{uno meno err quadro},
\begin{equation*}
\begin{split}
&\det \Omega = \frac{E}{2}\left ( \frac{E}{2} - \frac{(\partial_1 r)^2 + (\partial_2 r)^2}{1-r^2} \right)\cr
&= \frac{E}{2}\left ( \frac{E}{2} - \frac{\left( \frac{E}{2}\right)^2 (x^2+y^2) +a_n(x,y) + b_n(x,y) + E_n \cdot E_n^3 O(\|{\bf x}\|^6)}{\frac{E}{2}(x^2+y^2) - f_n(x,y) +E_n^3 O(\|{\bf x}\|^6)} \right)\cr
&= \frac{E}{2}\left ( \frac{E}{2} - \frac{E}{2} \frac{1 +\left(\frac{2}{E}\right)^2\left(\frac{a_n(x,y) + b_n(x,y)}{x^2 + y^2} + E_n \cdot E_n^3 O(\|{\bf x}\|^4)\right)}{1 - \frac{2}{E}\left(\frac{f_n(x,y)}{x^2+y^2} +E_n^3 O(\|{\bf x}\|^4)\right)} \right)\cr
&=\left(\frac{E}{2}\right)^2 \left( 1-   \frac{1 +\left(\frac{2}{E}\right)^2\left(\frac{a_n(x,y) + b_n(x,y)}{x^2 + y^2} + E_n^4 O(\|{\bf x}\|^4)\right)}{1 - \frac{2}{E}\left(\frac{f_n(x,y)}{x^2+y^2} +E_n^3 O(\|{\bf x}\|^4)\right)}  \right) \cr
&=\left(\frac{E}{2}\right)^2 \left[1-  \left(1 +\left(\frac{2}{E}\right)^2\left(\frac{a_n(x,y) + b_n(x,y)}{x^2 + y^2} + E_n^4 O(\|{\bf x}\|^4)\right)\right)\right.\\
&\hskip6.3cm\times \left.\left(1 +\frac{2}{E}\left(\frac{f_n(x,y)}{x^2+y^2} +E_n^3 O(\|{\bf x}\|^4)\right)\right)  \right]\cr
&=\left(\frac{E}{2}\right)^2 \left[1-  \left(1 +\left(\frac{2}{E}\right)^2\frac{a_n(x,y) + b_n(x,y)}{x^2 + y^2} +\frac{2}{E}\frac{f_n(x,y)}{x^2+y^2} +E_n^2 O(\|{\bf x}\|^4)\right) \right]\cr
&=\left(\frac{E}{2}\right)^2 \left[ -\left(\frac{2}{E}\right)^2\frac{a_n(x,y) + b_n(x,y)}{x^2 + y^2} -\frac{2}{E}\frac{f_n(x,y)}{x^2+y^2} +E_n^2 O(\|{\bf x}\|^4)\right]\cr
&=c E_n^3 \|{\bf x}\|^2 +E_n^4\,O( \|{\bf x}\|^4).
\end{split}
\end{equation*}
\end{proof}

\bigskip


\begin{thebibliography}{99}

\bibitem[A-T]{AT} Adler, R. J. and Taylor, J.E. (2007). {\it Random fields and geometry}. Springer-Verlag.

\bibitem[Ar-W]{ar-w}
Armentano, D. and Wschebor, M. (2009).  Random systems of polynomial equations. The expected number of roots under smooth analysis. {\it Bernoulli}, 15(1), 249-266.


\bibitem[A-L-W]{ALW} Azais, J.-M., L\'eon, and Wschebor, M. (2011). Rice formulas an Gaussian waves. {\it Bernoulli}, 17(1), 170-193.


\bibitem[A-W1]{aw-pol}
Aza\"{i}s, J.M. and Wschebor, M. (2005).
On the roots of a random system of equations. The theorem of Shub and Smale and some extensions. {\it Found. Comput. Math.}, 5(2), 125-144.

\bibitem[A-W2]{AW} Aza\"{i}s, J.M. and Wschebor, M. (2009). {\it Level Sets and Extrema of Random Processes and Fields}. Wiley-Blackwell.


\bibitem[Be1]{Berry 1977} Berry, M.V. (1977). Regular and irregular semiclassical wavefunctions. {\it J. Phys. A}, 10(12), 2083-2091.

\bibitem[Be2]{Berry 1978} Berry, M.V. (1978). Disruption of wavefronts: statistics of dislocations in incoherent Gaussian random waves. {\it J. Phys. A}, 11(1), 27-37.

\bibitem[Be3]{Berry 2002} Berry, M.V. (2002). Statistics of nodal lines and points in chaotic quantum billiards:
perimeter corrections, fluctuations, curvature. {\it J. Phys. A}, 35, 3025-3038.

\bibitem[B-D]{BD} Berry, M.V., and Dennis, M. R. (2000). Phase singularities in isotropic random waves. {\it Proc. Roy. Soc. Lond. A} 456, 2059-2079.

\bibitem[B-B]{B-B} Bombieri, E. and Bourgain, J. (2015). A problem on the sum of two squares. \emph{International Math. Res. Notices}, 11, 3343-3407.

\bibitem[B-R]{B-R} Bourgain, J. and Rudnick, Z. (2011). On the geometry of the nodal lines of eigenfunctions on the two-dimensional torus. {\it Ann. Henri Poincar\'e}, 12, 1027-1053

\bibitem[C]{C} Cheng, S.Y. (1976) Eigenfunctions and nodal sets. \emph{Comment. Math. Helv.}, 51(1), 43-55




\bibitem[D-O-P]{D-survey}Dennis, M. R., O'Holleran, K., and Padgett, M. J. (2009). Singular Optics: Optical Vortices and Polarization Singularities. In: {\it Progress in Optics} Vol. 53., Elsevier, 293-363.



\bibitem[D]{D} Dudley, R.M. (2011). {\it Real analysis and probability}. Cambridge University Press.

\bibitem[E-H]{EH} Erd\"os, P., and Hall, R.R (1999). On the angular distribution of Gaussian integers with fixed norm. {\it Discrete Math.}, 200(1-3), 87-94.

\bibitem[G-H]{GH} Geman, D., and Horowitz, J. (1980). Occupation densities. {\it Ann. Probab.}, 8(1), 1-67.

\bibitem[H-W]{H-W} Hardy, G.H. and Wright, E.M. (2008). {\it An introduction to the theory of numbers} (6th Edition). Oxford University Press..

\bibitem[K-Z]{KZ} Kabluchko, Z., and Zaporozhet, D. (2014). Random determinants, mixed volumes of ellipsoids and zeros of Gaussian random fields. {\it Journal of Math. Sci.} 199(2), 168-173.

\bibitem[K]{k} Kostlan, E. (2002). On the expected number of real roots of a system of random polynomial equations. In: {\it Foundations of computational mathematics (Hong Kong, 2000)}, World Scientific, 149-188.

\bibitem[Ko]{Kov} Khovanski{\u \i}, A. G. (1991). {\it Fewnomials}. American Mathematical Society.

\bibitem[K-K-W]{KKW} Krishnapur, M., Kurlberg, P., and Wigman, I. (2013).Nodal length fluctuations for arithmetic random waves. \emph{Ann. of Math.} 177(2), 699-737.

\bibitem[K-W]{KW} Kurlberg, P.; Wigman, I. (2016). On probability measures arising from lattice points on circles. To appear in: {\it Mathematische Annalen}.

\bibitem[L]{Lee} Lee, J.M. (1997). {\it Riemannian Manifolds}. Springer-Verlag.

\bibitem[Ma]{Ma} Maffucci, R.W. (2016). {Nodal intersections of random eigenfunctions against a segment on the 2-dimensional torus}. {\it ArXiv: 1603.09646}.

\bibitem[McL]{mcl}
McLennan, A. (2002). The expected number of real roots of a multihomogeneous system of polynomial equations.  {\it Amer. J. Math.}, 124(1), 49-73.

\bibitem[M-P-R-W]{MPRW} Marinucci, D., Peccati, G., Rossi, M., and Wigman, I. (2015). Non-Universality of nodal length distribution for arithmetic random waves. To appear in: {\it Geom. Funct. Anal.}

\bibitem[N-S]{ST2} Nazarov, F. and Sodin, M. (2011). Fluctuations in random complex zeroes: Asymptotic normality revisited. {\it Int. Math. Res. Notices}, 24, 5720-5759.

\bibitem[N-P]{NP} Nourdin, I., and Peccati, G. (2012). {\it Normal approximations with Malliavin calculus. From Stein's method to universality}. Cambridge University Press.

\bibitem[N]{N-survey} Nonnemacher, S. (2013). Anatomy of quantum chaotic eigenstates. In: B. Duplantier, S. Nonnemacher, V. Rivasseau (Eds), {\it Chaos}, {Prog. Math. Phys.} 66, 194-238. Birkh\"auser.

\bibitem[N-V]{NV} Nonnemacher, S. and Voros, A. (1998). Chaotic eigenfunctions in phase space. {\it J. Stat. Phys.} 92, 431-518.

\bibitem[N-B]{NB} Nye, J.F., and Berry, M.V. (1974). Dislocations in wave trains. {\it Proc. R. Soc. Lond. A}, 336, 165-190.

\bibitem[O-R-W]{ORW} Oravecz, F., Rudnick, Z., and Wigman, I. (2008). The Leray measure of nodal sets for random eigenfunctions on the torus. {\it Ann. Inst. Fourier (Grenoble)} 58(1), 299-335.

\bibitem[P-T]{PT} Peccati, G., and Taqqu, M.S. (2010). {\it Wiener chaos: moments, cumulants and diagrams}. Springer-Verlag.


\bibitem[Ro]{ro}
Rojas, J. (1996). On the average number of real roots of certain random sparse polynomial systems.
In: {\it The mathematics of numerical analysis (Park City, UT, 1995)}, Lectures in Appl. Math. 32, 689-699.
 Amer. Math. Soc.

\bibitem[S-S]{ss}
Shub, M., and Smale, S. (1993).
Complexity of B\'ezout's theorem. II. In: {\it Volumes and
Computational algebraic geometry (Nice, 1992)}, Progr. Math. 109, 267-285.
Birkh\"auser.

\bibitem[Ro]{Ro} Rossi, M. (2015). {\it The Geometry of Spherical Random Fields}. PhD thesis, University
of Rome Tor Vergata. ArXiv: 1603.07575.

\bibitem[Ro-W]{RoW} Rossi, M., and Wigman, I. On the asymptotic distribution of nodal intersections of arithmetic random waves against smooth curves.  {\it In preparation}.

\bibitem[R-W]{RW} Rudnick, Z., and Wigman, I. (2008).  On the volume of nodal sets for eigenfunctions of the Laplacian on the torus. {\it Ann. Henri Poincar\'e}  9(1), 109-130.

\bibitem[R-W2]{RW2} Rudnick, Z., and Wigman, I. (2014).  Nodal intersections for random eigenfunctions on the torus. To appear in: {\it Amer. J. of Math.}

\bibitem[S-T]{ST1} Sodin, M., and Tsirelson, B. (2004). Random complex zeroes, I. Asymptotic normality. {\it Israel J. Math.}, 144(1), 125-149.




\bibitem[S-Z1]{SZ1} Shiffman, B, and Zelditch, S. (2008). Number variance of random zeros on complex
manifolds. {\it Geom. Funct. Anal.} 18(4), 1422-1475.


\bibitem[S-Z2]{SZ2} Shiffman, B, and Zelditch, S. (2010). Number variance of random zeros on complex
manifolds, II: smooth statistics. {\it Pure and Applied Mathematics Quarterly}, 6(4), 1145-1167.


\bibitem[U-R]{UR-survey} Urbina, J., and Richter, K. (2013). Random quantum states: recent developments and applications. {\it Advances in Physics}, 62, 363-452 (2013)

\bibitem[W]{wig} Wigman, I. (2010).  Fluctuations of the nodal length of random spherical harmonics.
 {\it Communications in Mathematical Physics}, 298(3), 787-831.

\bibitem[Ws1]{w}
Wschebor, M. (2005). On the Kostlan-Shub-Smale model for random polynomial systems.
Variance of the number of roots. {\it J. Complexity} 21(6), 773-789.




%















%



%






















\bibitem[Ya]{Yau} Yau, S.T. (1982). Survey on partial differential equations in differential geometry. {\it Seminar on Differential Geometry}, Ann. of Math. Stud. 102, 3-71. Princeton Univ. Press, Princeton.

\bibitem[Zy]{Zy} Zygmund, A. (1974). On Fourier coefficients and transforms of functions of two variables,
{\it Studia Math.} 50, 189-201.



\end{thebibliography}
\end{document}